\pgfplotsset{compat=newest}
\def\grd@save@target#1{%
	\def\grd@target{#1}}
\def\grd@save@start#1{%
	\def\grd@start{#1}}
\tikzset{
	grid with coordinates/.style={
		to path={%
			\pgfextra{%
				\edef\grd@@target{(\tikztotarget)}%
				\tikz@scan@one@point\grd@save@target\grd@@target\relax
				\edef\grd@@start{(\tikztostart)}%
				\tikz@scan@one@point\grd@save@start\grd@@start\relax
				\draw[minor help lines] (\tikztostart) grid (\tikztotarget);
				\draw[major help lines] (\tikztostart) grid (\tikztotarget);
				\grd@start
				\pgfmathsetmacro{\grd@xa}{\the\pgf@x/1cm}
				\pgfmathsetmacro{\grd@ya}{\the\pgf@y/1cm}
				\grd@target
				\pgfmathsetmacro{\grd@xb}{\the\pgf@x/1cm}
				\pgfmathsetmacro{\grd@yb}{\the\pgf@y/1cm}
				\pgfmathsetmacro{\grd@xc}{\grd@xa + \pgfkeysvalueof{/tikz/grid with coordinates/major step}}
				\pgfmathsetmacro{\grd@yc}{\grd@ya + \pgfkeysvalueof{/tikz/grid with coordinates/major step}}
				\foreach \x in {\grd@xa,\grd@xc,...,\grd@xb}
				\node[anchor=north] at (\x,\grd@ya) {\pgfmathprintnumber{\x}};
				\foreach \y in {\grd@ya,\grd@yc,...,\grd@yb}
				\node[anchor=east] at (\grd@xa,\y) {\pgfmathprintnumber{\y}};
			}
		}
	},
	minor help lines/.style={
		help lines,
		step=\pgfkeysvalueof{/tikz/grid with coordinates/minor step}
	},
	major help lines/.style={
		help lines,
		line width=\pgfkeysvalueof{/tikz/grid with coordinates/major line width},
		step=\pgfkeysvalueof{/tikz/grid with coordinates/major step}
	},
	grid with coordinates/.cd,
	minor step/.initial=.2,
	major step/.initial=1,
	major line width/.initial=0.25mm,
}
\tikzset{
	% style to apply some styles to each segment of a path
	on each segment/.style={
		decorate,
		decoration={
			show path construction,
			moveto code={},
			lineto code={
				\path [#1]
				(\tikzinputsegmentfirst) -- (\tikzinputsegmentlast);
			},
			curveto code={
				\path [#1] (\tikzinputsegmentfirst)
				.. controls
				(\tikzinputsegmentsupporta) and (\tikzinputsegmentsupportb)
				..
				(\tikzinputsegmentlast);
			},
			closepath code={
				\path [#1]
				(\tikzinputsegmentfirst) -- (\tikzinputsegmentlast);
			},
		},
	},
	% style to add an arrow in the middle of a path
	mid arrow/.style={postaction={decorate,decoration={
				markings,
				mark=at position .5 with {\arrow[#1]{stealth}}
	}}},
	rmid arrow/.style={postaction={decorate,decoration={
				markings,
				mark=at position .5 with {\arrowreversed[#1]{stealth}}
	}}},
	end arrow/.style={postaction={decorate,decoration={
				markings,
				mark=at position 1 with {\arrow[#1]{stealth}}
	}}},
	start arrow/.style={postaction={decorate,decoration={
				markings,
				mark=at position 0 with {\arrow[#1]{stealth}}
	}}},
	mid3 arrow/.style={postaction={decorate,decoration={
				markings,
				mark=at position .3 with {\arrow[#1]{stealth}}
	}}},
	rmid3 arrow/.style={postaction={decorate,decoration={
				markings,
				mark=at position .7 with {\arrowreversed[#1]{stealth}}
	}}},
	mid4 arrow/.style={postaction={decorate,decoration={
				markings,
				mark=at position .4 with {\arrow[#1]{stealth}}
	}}},
	rmid4 arrow/.style={postaction={decorate,decoration={
				markings,
				mark=at position .4 with {\arrowreversed[#1]{stealth}}
	}}},
}
\tikzset{every state/.style={minimum size=0pt}}
\tikzset{
	mark position/.style args={#1(#2)}{
		postaction={
			decorate,
			decoration={
				markings,
				mark=at position #1 with \coordinate (#2);
			}
		}
	}
}
\tikzset{middle segment/.style={decoration={middle},decorate, segment length=#1}}
\renewcommand\paragraph{\@startsection{paragraph}{4}{\z@}%
	{-2.5ex\@plus -1ex \@minus -.25ex}%
	{1.25ex \@plus .25ex}%
	{\normalfont\normalsize\bfseries}}
\newtheorem{thm}{Theorem}
\newtheorem{lm}[thm]{Lemma}
\newtheorem{defn}[thm]{Definition}
\newtheorem{prop}[thm]{Proposition}
\newtheorem{rmk}[thm]{Remark}
\newcommand{\complexC}{\mathbb{C}}
\newcommand{\dd}{{\mathrm d}}
\newcommand{\ddbarr}[1]{\frac{{\mathrm d}#1}{2\pi {\mathrm i}}}
\newcommand{\hh}{\mathsf{h}}
\newcommand{\ii}{\mathrm{i}}
\newcommand{\inn}{\mathrm{in}}
\newcommand{\intZ}{\mathbb{Z}}
\newcommand{\Kess}{\mathrm{ch}}
\newcommand{\limess}{\chi}
\newcommand{\LL}{\mathrm{L}}
\newcommand{\out}{\mathrm{out}}
\newcommand{\prob}{\mathbb{P}}
\newcommand{\realR}{\mathbb{R}}
\newcommand{\RR}{\mathrm{R}}
\renewcommand{\Re}{\mathrm{Re}}
\renewcommand{\a}{\omega}
\newcommand{\nXX}{\omega}
\newcommand{\nHH}{\theta}
\newcommand{\h}{\mathfrak{h}}
\renewcommand{\d}{\diff}
\newcommand{\st}{\mathsf{t}}
\newcommand*\diff{\mathop{}\!\mathrm{d}}
\newcommand{\TT}{\tau}
\newcommand{\XX}{\alpha}
\newcommand{\HH}{\beta}
\newcommand{\he}{\mathcal{H}}
\newcommand{\rs}{\mathrm{s}}
\newcommand{\DD}{\mathbf{D}}
\newcommand{\uc}{\mathrm{UC}}
\newcommand{\ucc}{\mathrm{UC}_c}
\newcommand{\B}{\mathbf{B}}
\newcommand{\ta}{\bm{\tau}}
\newcommand{\sig}{\bm{\sigma}}
\newcommand{\CL}{\mathbb{C}_{\mathrm{L}}}
\newcommand{\CR}{\mathbb{C}_{\mathrm{R}}}
\newcommand{\xib}{\bm{\xi}}
\newcommand{\etab}{\bm{\eta}}
\newcommand{\mnw}{m\mathrm{NW}}
\newcommand{\1}{\mathbf{1}}
\numberwithin{equation}{section} 
\numberwithin{thm}{section}
\author{Yuchen Liao\footnote{School of Mathematical Sciences, University of Science and Technology of China, Hefei, Anhui 230026, P.R.China. Email: \texttt{ycliao@ustc.edu.cn}}\and 
		Zhipeng Liu\footnote{Department of Mathematics, University of Kansas, Lawrence, KS 66045. Email: \texttt{zhipeng@ku.edu}}}
	\title{Multipoint distributions of the KPZ fixed point with compactly supported initial conditions}
\begin{document}
		\maketitle
        \begin{abstract}
            The KPZ fixed point is a universal limiting space-time random field for the Kardar-Parisi-Zhang universality class. While the joint law of the KPZ fixed point at a fixed time has been studied extensively, the multipoint distributions of the KPZ fixed point in the general space-time plane are much less well understood. More explicitly, formulas were only available for the narrow wedge initial condition \cite{johansson_rahman_multitime,Liu2022} and  the flat initial condition \cite{Liu2022} for the multipoint distributions, and the half-Brownian and Brownian initial conditions \cite{johansson_rahman_inhomogeneous,rahman25temporal} for the two-point distributions. 
            
            In this paper, we obtain the first formula for the space-time joint distributions of the KPZ fixed point with general initial conditions of compact support. The formula is 
            obtained through taking $1:2:3$ KPZ scaling limit of the multipoint distribution formulas for the totally asymmetric simple exclusion process (TASEP). A key ingredient is a probabilistic representation, inspired by \cite{matetski2021kpz},  of the 
            kernel encoding the initial condition for TASEP, which was first defined through an implicit characterization in \cite{Liu2022}. Moreover, we also verify that the equal time version of our formula matches the path integral formula in \cite{matetski2021kpz} for the KPZ fixed point when the initial condition is of compact support.

        \end{abstract}
    \section{Introduction}

    \subsection{Background}

    The Kardar–Parisi–Zhang (KPZ) universality class \cite{KPZ1986} contains a broad family of random growth models in $(1+1)$-dimensions, including models from directed polymers {\cite{timo,corwin2014tropical}}, interacting particle systems {\cite{johansson2000shape}}, stochastic partial differential equations {\cite{KPZ1986,Hairer13}}, etc. In the past four decades,  the KPZ universality class has become a central subject of study in probability theory, statistical mechanics, and mathematical physics. For a more thorough introduction, we refer to the surveys \cite{Corwin_introkpz, Quastel_introkpz, zygouras_review} and the references therein. 
    
    A hallmark of this class is the universal $1:2:3$ scaling exponent for height fluctuations, spatial correlations and temporal correlations and a conjectural universal scaling limit for all the models in the universality class. More precisely, it is conjectured that the random height functions $H(x,t)$ describing the evolutions of different models will all converge to a universal limiting space-time field $\mathcal{H}(\alpha,\tau)$, under the following scaling:
    \begin{equation}
        \lim_{\varepsilon\to 0} c_3\varepsilon^{\frac{1}{2}}H(c_2\alpha\varepsilon^{-1},c_3\tau\varepsilon^{-\frac{3}{2}}; \h^{\varepsilon}) = \mathcal{H}(\alpha,\tau;\h),
    \end{equation}
    where $\h^\varepsilon$ and $\h$ are the initial conditions for the height functions before and after the limit with $\h^\varepsilon\to \h$ in a proper sense. A central question in this area is to understand $\mathcal{H}(\alpha,\tau;\h)$. 

    The field $\mathcal{H}(\alpha,\tau;\h)$ is known as the KPZ fixed point. It was first constructed in \cite{matetski2021kpz}, and can be described as a $1:2:3$ scaling invariant Markov process on the space of upper semicontinuous functions on $\mathbb{R}$ with explicit formulas for its transition probability. Convergence to the KPZ fixed point has only been shown for a few special models, see \cite{matetski2021kpz, quastel20brownian,Virag20,matetski2022polynuclear, Wu23KPZ,aggarwal24fixed}. An alternative description is through a Hopf-Lax type variational formula \cite{quastel20brownian}, with the driving force given by the directed landscape $\mathcal{L}(y,s;x,t)$. This is another universal limiting object in the KPZ universality class first constructed in \cite{dauvergne2018directed}. Convergence to the directed landscape are shown for a few special models in \cite{dauvergne2018directed, dauvergne2021scaling, Wu23KPZ, aggarwal24direct,Dauvergne-Zhang24}. 
    
     It is well known (see, e.g., \cite{baik1999distribution,johansson2000shape,AmirCorwinQuastel2011}) that for special initial conditions, the one point marginals of $\mathcal{H}(\alpha,\tau)$ are described by the Tracy-Widom distribution and its relatives. Extensions to joint laws of multiple spatial points at equal time were obtained in \cite{PhahoferSpohn2002,BorodinFPS2007,BorodinFerrariSasamoto_transition,BaikFerrariPeche_stationary}, leading to explicit descriptions of the spatial process $\mathcal{H}(\cdot, \tau)$ for special initial conditions. In the breakthrough work \cite{matetski2021kpz}, the authors were able to find explicit Fredholm determinant formulas for the joint laws of $\mathcal{H}(\alpha_1,\tau;\h),\ldots,\mathcal{H}(\alpha_m,\tau;\h)$, starting from general upper semicontinuous initial conditions. This leads to a complete description of the Markovian dynamics of the fixed point. We also remark that the results of \cite{matetski2021kpz} were further generalized in \cite{Matetski-Remenik23a,BLSZ23,Matetski-Remenik23}.

    Joint laws along the time direction, or more generally in space-time, are much less
    known until recently. For the narrow wedge initial condition, a formula for the multi-time distribution was obtained by \cite{johansson_rahman_multitime}, which builds on the earlier work of two-time formulas in \cite{johansson2015two, johansson2018two}.  A different multipoint formula which works for both the narrow wedge and the flat initial conditions and possibly equal time parameters, was obtained in \cite{Liu2022}. We remark that a direct proof of the equivalence between the two formulas for the narrow wedge initial conditions is still missing due to the complicatedness of both formulas. 
    Two-time formulas for half-Brownian or Brownian initial conditions were also obtained recently in \cite{johansson_rahman_inhomogeneous,rahman25temporal}.  Besides these distribution formulas, there are also results on the correlation or tail properties of KPZ models at two  times, see \cite{LeDoussal17tail,LeDoussal18replica,le2017maximum,johansson20longshort,FerrariSphon16correlation,FerrariOccelli19time,CorwinGhosalHammond21}. We point out that all these mentioned results on the multi-time problems are studying the KPZ fixed point on $\realR$ and with special initial conditions. It is also worth mentioning the related work \cite{baik2019multipoint,BaikLiu21general,Liao22} for the multipoint distributions of TASEP models in periodic domain.

In this paper, we obtain the first formula for the space-time joint distributions of the KPZ fixed point with general initial conditions of compact support. We will discuss in more details in the following section.

\subsection{Main results}
 The main goal of this paper is to describe the space-time joint distributions of the two-dimensional random field $\mathcal{H}(\alpha,\tau;\h)$, with sufficiently general initial conditions $\h$, in the same spirit as in \cite{matetski2021kpz}. We start with introducing the spaces of initial conditions we will consider. The largest possible space of initial conditions from which the KPZ fixed point will be almost surely finite at all positive time is the following:
    \begin{equation}
        \uc:=\left\{\h:\mathbb{R}\to [-\infty,\infty) \text{ upper semicontinuous}, \h\not\equiv -\infty\text{ and } \limsup_{x\to \pm\infty}\frac{\h(\mathsf{x})}{\mathsf{x}^2}\leq 0\right\}.
    \end{equation}
 For technical reasons, We will mostly work with the dense subspace of $\uc$ consisting of functions that are $-\infty$ outside a compact set.
\begin{defn}[The function spaces of initial conditions and topology] 
Define
    \begin{equation}
        \ucc:= \{\h\in \uc: \text{there exists } L>0 \text{ such that } \h(\mathsf{x})=-\infty \text{ for all }|\mathsf{x}|>L\}.
    \end{equation}
   The space is equipped with the topology of local Hausdorff convergence of hypographs. We will call functions $\h\in \ucc$ compactly supported, where the support of $\h\in \uc$ is defined as 
    \begin{equation}
        \mathrm{supp}(\h):=\overline{\{\mathsf{x}\in \mathbb{R}: \h(\mathsf{x})\neq -\infty\}},
    \end{equation}
    and  $\overline{A}$ means the closure of the set $A$.
    \end{defn}
 Our main results are formulas for the joint distributions of the KPZ fixed point starting with initial condition $\h\in \ucc$,  at arbitrary many distinct space-time points $(\XX_1,\TT_1),\ldots, (\XX_m,\TT_m)$. To state the result, we introduce the following total ordering $\prec$ on the space-time plane  $\mathbb{R}\times \mathbb{R}$: 
     \begin{equation}
         (\XX_1, \TT_1)\prec (\XX_2,\TT_2) \Longleftrightarrow \TT_1<\TT_2, \text{ or } \TT_1=\TT_2\text{ and }\XX_1<\XX_2.
     \end{equation}
    \begin{thm}\label{thm: kpz_multitime}
        Let $\h\in \ucc$. Then for any $m\geq 1$ and any $m$ space-time points $(\XX_1,\TT_1)\prec \cdots\prec (\XX_m,\TT_m)\in \mathbb{R}\times \mathbb{R}_+$,  we have the following formula for the multipoint distribution of the KPZ fixed point $\he(\XX,\TT;\h)$:
        \begin{equation}\label{eq: multi_time}
            \mathbb{P}\left(\bigcap_{\ell=1}^m\left\{\he(\XX_\ell,\TT_\ell;\h)\leq \HH_\ell\right\}\right) =  \oint_0 \frac{\diff z_1}{2\pi\ii z_1(1-z_1)}\cdots \oint_0 \frac{\diff z_{m-1}}{2\pi\ii z_{m-1}(1-z_{m-1})} \DD_{\h}(z_1,\ldots,z_{m-1}),
        \end{equation}    
        where $\oint_0$ denotes an integral along a circle around the origin with counterclockwise orientation and sufficiently small radius. The function $\DD_{\h}(z_1,\ldots,z_{m-1})$ is defined as a Fredholm determinant in Definition \ref{def:D_general}. An equivalent definition through a series expansion will be discussed in Section \ref{sec: equiv_def}.
    \end{thm}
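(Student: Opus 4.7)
The plan is to prove Theorem~\ref{thm: kpz_multitime} by a $1{:}2{:}3$ scaling-limit argument from TASEP. The starting point is the multipoint distribution formula of \cite{Liu2022}, which expresses the joint distribution of the TASEP height function at $m$ space-time points as an $(m-1)$-fold contour integral over a Fredholm determinant. In that formula the initial condition enters through an operator that is only characterized implicitly. The first step is to produce an explicit probabilistic representation of this operator as a transition kernel of a (geometric/Bernoulli) random walk killed at the first hitting time of the hypograph of the rescaled initial data, in the spirit of \cite{matetski2021kpz}. Obtaining this representation requires revisiting the combinatorial identity that produces the kernel in \cite{Liu2022} and recognizing the resulting sums as one-step transition probabilities composed with a hitting transform.

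With the probabilistic representation in hand, the next step is to approximate a general $\h \in \ucc$ by TASEP initial data $\h^{\varepsilon}$ converging to $\h$ under the inverse $1{:}2{:}3$ scaling, plug into the multipoint formula, and let $\varepsilon \to 0$. The random walk converges to a Brownian motion by Donsker's invariance principle, so the hitting-time operator should converge (in trace norm) to the continuum Brownian hit operator. Compact support of $\h$ confines the walk to a bounded spatial region, so there are no issues at infinity and standard invariance arguments together with exponential tail bounds on the TASEP kernel along the relevant integration contours yield convergence. Convergence of the TASEP height function itself to $\he(\XX,\TT;\h)$ \cite{matetski2021kpz} identifies the probabilistic side of the limit with the left-hand side of \eqref{eq: multi_time}.

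To promote pointwise kernel convergence to convergence of the Fredholm determinant $\DD_{\h}$ and then of the contour integrals, one needs uniform-in-$\varepsilon$ trace-norm estimates on the prelimit operators along contours shrinking with $\varepsilon$, followed by a dominated-convergence-type argument to exchange the limit with the $z_1,\ldots,z_{m-1}$ integrations. The explicit probabilistic representation is the crucial input: it yields uniform exponential decay of the initial-condition piece of the kernel, which combines with the decay of the resolvent/prefactor structure from \cite{Liu2022} to give an integrable dominating function.

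The main obstacle, I expect, will be producing the probabilistic representation of the TASEP initial-condition kernel in the first place: the implicit characterization in \cite{Liu2022} does not immediately suggest a Markovian hitting-time interpretation, and one must unpack it enough for the Markov structure to surface. Secondary difficulties lie in the uniform trace-class bounds required for the Fredholm limit and in the contour deformation arguments matching the prelimit small-radius contours to those of \eqref{eq: multi_time}. As a final consistency check, one specializes to the equal-time regime $\TT_1 = \cdots = \TT_m$ and verifies agreement with the path-integral formula of \cite{matetski2021kpz}; this is a natural corollary of the hitting-time description of the initial-condition kernel and is exactly the cross-check promised in the abstract.
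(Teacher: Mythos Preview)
Your proposal captures the paper's overall architecture correctly: derive a probabilistic (random-walk hitting) representation of the initial-condition kernel $\Kess_Y$ in the TASEP formula of \cite{Liu2022}, take the $1{:}2{:}3$ scaling limit so that the walk becomes a Brownian motion and $\Kess_Y$ becomes $\limess_\h$, and control the Fredholm determinant and the $z$-integrals by dominated convergence. The identification of the probabilistic representation as the main obstacle is accurate.

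There is, however, one substantive strategic difference. You propose to pass directly from TASEP to the KPZ fixed point for a general $\h\in\ucc$, invoking Donsker's principle for the hitting operator. The paper does not do this. Instead it first restricts to initial data consisting of finitely many narrow wedges (the class $\mnw_0$), for which the random-walk hitting time is supported on a fixed finite set of indices; this makes the kernel $\Kess_{Y^\varepsilon}$ a finite sum with explicit terms, so pointwise convergence and uniform bounds (of the form $\mathrm{e}^{C(|\xi|^2+|\eta|^2)}$) can be obtained by direct computation rather than by a functional limit theorem. Only afterwards, working entirely at the level of the limiting formula, does the paper extend to all of $\ucc$ by approximating $\h$ in $\uc$ by elements of $\mnw$ and using continuity of $\limess_\h$ in $\h$ together with continuity of the law of the KPZ fixed point. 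Your direct route is not wrong in principle, but it would force you to prove uniform-in-$\varepsilon$ bounds on the rescaled hitting kernel for arbitrary compactly supported profiles, which is more delicate; the two-step route trades this for an elementary density argument. A minor point: the paper controls the Fredholm determinant via its series expansion and Hadamard's inequality, not via trace-norm estimates.
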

    Similar as in the narrow wedge case \cite{johansson_rahman_multitime, Liu2022}, our multipoint formula for the KPZ fixed point with a general initial condition has the form of contour integrals of a Fredholm determinant. The Fredholm determinant $\DD_{\h}$ has a block diagonal kernel acting on nested Airy-type contours. The dependency on the initial condition is only through the top-left corner of the kernel, characterized by a function $\limess_\h(\eta,\xi)$ defined on certain Airy contours, see Section \ref{sec: def_limess} for its definition. For the narrow wedge case, our formula recovers the one in \cite{Liu2022}. 

    The function $\limess_\h(\eta,\xi)$ is defined in terms of Brownian motion hitting expectations, an idea highly inspired by \cite{matetski2021kpz}. Indeed,  $\limess_\h(\eta,\xi)$ should be understood as the Brownian hitting operators in \cite{matetski2021kpz} written in Fourier-like spaces. Nevertheless, we stress that our results do not follow directly from \cite{matetski2021kpz}. In the multi-time situation, direct connections to determinantal point processes and the Eynard-Mehta theorem are lost and the bi-orthoganalization procedure here arises differently and takes a different form. On the contrary, our results are, in some sense, more general. Indeed, if we set the time parameters to be the same (which is allowed in the assumption of the theorem), the right-hand side of \eqref{eq: multi_time} can be shown to recover the formulas in \cite{matetski2021kpz}, after some quite non-trivial manipulations. We refer to Section \ref{sec: equal_equiv} for the details, see also \cite{LiuOrtiz25} which treats the special narrow wedge case.

\subsection{Outline of the proof and some discussions}
Theorem \ref{thm: kpz_multitime} is proved by taking a $1:2:3$ scaling limit of the corresponding multipoint distribution formulas of the totally asymmetric simple exclusion process (TASEP). The starting point is an algebraic formula obtained in \cite[Theorem 2.1]{Liu2022} for the multipoint (space-time) distribution of TASEP starting from any right-finite initial condition. The dependency of the TASEP formula on the initial condition is encoded in a function $\Kess_Y(v,u)$, which is characterized by an implicit reproducing-type property, see Definition \ref{def:KY_ess}. An explicit form of $\Kess_Y(v,u)$ in terms of symmetric functions was also obtained in \cite{Liu2022}, which is suitable for asymptotics for the step and (pseudo) flat initial conditions. Thus it led to the corresponding multipoint formula in \cite{Liu2022} for the KPZ fixed point starting from the narrow wedge and flat initial conditions after taking limits.

A key ingredient of this paper is an explicit probabilistic representation of the function $\Kess_Y(v,u)$, through a hitting expectation with respect to geometric random walks, see Theorem \ref{thm:essential_hitting}. The probabilistic representation is suitable for asymptotic analysis for more general initial conditions and leads to the Brownian hitting representation in the limit. For technical reasons, we first take the limit of the TASEP formula under the assumption that the KPZ fixed point starts with initial conditions consisting of finitely many narrow wedges, and then extend the formula to compactly supported initial condition at the level of the KPZ fixed point, using a density argument and the continuity of the law of the KPZ fixed point with respect to initial conditions.

Finally, we comment on our assumptions on the initial conditions. It would be desirable if one can get a formula that works for all initial conditions $\h\in \uc$, in particular, the flat initial condition $\h\equiv 0$. The reason we choose to restrict to the subspace $\ucc$ is not merely a technical issue. There are genuine structural difficulties in this generality: the characteristic function $\limess_\h(\eta,\xi)$ of the initial condition (see Definition \ref{def: limess}) may not be well-defined pointwisely in general. It might be possible to define $\limess_\h(\eta,\xi)$ in a proper sense case by case when $\h$ is not compactly supported. Indeed, for the flat initial condition $\h\equiv 0$, one can show from our formula that $\limess_\h(\eta,\xi)$ is the dirac delta integral kernel $\delta_{\eta=-\xi}$ \cite{Liu2022}. However, our current strategy requires to control the rate of the growth of the kernel, where compactness of the initial condition is needed.
%This suggests that in general, one should
It might be possible that one needs to interpret $\limess_\h(\eta,\xi)$ in the sense of distribution instead of a function. It might also be possible to conjugate our formula to real spaces so that the limit when the support goes to infinity exists, at the level of operators acting on real spaces. % We choose to stick to contour integral/Fourier type kernels, as it makes the algebraic structure of the multi-time formula much nicer and the analogy between narrow wedge and general initial conditions much more transparent. 
We leave it as a future project to extend our formula to any $\h\in \uc$, with a proper way to understand $\limess_\h(\eta,\xi)$. %a kernel acting on real spaces.
\subsection*{Notation and conventions}
Throughout the paper, we will mostly use English letters $x,t,h,u,v,w,\ldots$ for the pre-limit (TASEP) formulas and Greek letters $\alpha,\tau,\beta,\xi,\eta,\zeta,\ldots$ for the limiting (KPZ fixed point) formulas.  A detailed  summary of the notation we use is in the following table.

\begin{tabular}{|l|c|c|} 
    \hline
    \textbf{Notation} & \textbf{Pre-limit (TASEP) formulas} & \textbf{Limiting (KPZ fixed point) formulas} \\
    \hline
     time, space, height& $t,x,h$ & $\tau,\alpha,\beta$ \\
    \hline
    initial height function& $\mathsf{h}(\cdot)$ & $\h(\cdot)$\\
    \hline
    the height function& $H(x,t;\mathsf{h})$  & $\mathcal{H}(\alpha,\tau;\h)$ \\
    \hline
    integration contours & $\Sigma_{\LL},\Sigma_\RR$ & $ \Gamma_{\LL},\Gamma_{\RR}$\\
    \hline
    integration variables  & $u,v,w$ & $\xi,\eta,\zeta$\\
    \hline
\end{tabular}

\subsection*{Organization of the paper}
The rest of the paper is organized as follows. In Section \ref{sec: KPZ_formula} we present the formulas for the main part $\DD_\h$ appearing in Theorem \ref{thm: kpz_multitime}, both as a Fredholm determinant in Section \ref{sec: def_Dlim}, and as a Fredholm series expansion in Section \ref{sec: equiv_def}.  Then in Section \ref{sec: tasep formula} we present and prove the corresponding pre-limit formulas for TASEP, in particular in Section \ref{sec: tasep char} we prove that the characteristic function of the initial condition is given by a random walk hitting expectation. In Section \ref{sec: tasep convergence} we prove convergence of the TASEP formulas to the KPZ fixed point formulas, under the assumption that the initial condition of the KPZ fixed point consists of finitely many narrow wedges. We then extend the KPZ fixed point formula to any compactly supported initial condition in Section \ref{sec: compact support}. Finally, in Section \ref{sec: equal time} we show that at equal-time, our formula reduces to a genuine Fredholm determinant, which is then shown to be equivalent to the path integral formula of \cite{matetski2021kpz}. 
\subsection*{Acknowledgements}
 YL acknowledges the support of a Junior Fellowship at Institut Mittag-Leffler in Djursholm, Sweden, during the Fall semester of 2024, funded by the Swedish Research Council under grant no. 2021-06594. The work of ZL is partially supported by NSF DMS-2246683. The authors thank Konstantin Matetski, Daniel Remenik, Tejaswi Tripathi, Li-Cheng Tsai, Xuan Wu, and Nikolaos Zygouras for their comments and suggestions.
    
    \section{Multipoint distribution formula for the KPZ fixed point}\label{sec: KPZ_formula}
In this section, we explain in details the  function $\DD_\h(z_1,\ldots,z_{m-1})$ appearing on the right-hand side of \eqref{eq: multi_time}. Proofs of the formula will be deferred to Section \ref{sec: tasep convergence} and Section \ref{sec: compact support}.

\subsection{Fredholm determinant representation of $\DD_{\h}(z_1,\ldots,z_{m-1})$}\label{sec: def_Dlim}

The function $\DD_{\h}$ is defined in the same way as its analog in \cite{Liu2022} for the narrow wedge initial condition. The only difference is  the part involving the initial condition $\h$, which only appears in the top-left corner of the integral kernel. Below, we introduce the Fredholm determinant representation of the function $\DD_{\h}$.

Denote two regions of the complex plane
\begin{equation}
\complexC_\LL:=\left\{\zeta\in\complexC: \mathrm{Re}(\zeta)<0\right\}, \quad \text{ and }
\quad \complexC_\RR:=\left\{\zeta\in\complexC: \mathrm{Re}(\zeta)>0\right\}.
\end{equation}
 Let $\Gamma_{m,\LL}^{\out}$, $\ldots$, $\Gamma_{2,\LL}^{\out}$, $\Gamma_{1,\LL}$, $\Gamma_{2,\LL}^{\inn}$, $\ldots$, $\Gamma_{m,\LL}^{\inn}$ be $2m-1$ ``nested'' contours in the region $\complexC_\LL$. They are all unbounded contours from $\infty\mathrm{e}^{-2\pi\ii/3}$ to $\infty\mathrm{e}^{2\pi\ii/3}$. Moreover, they are located from the right (corresponding to the superscript ``$\out$'') to the left (``$\inn$''). The superscripts ``$\out$'' and ``$\inn$'' should be understood with respect to the point $-\infty$. Similarly, let $\Gamma_{m,\RR}^{\out}$,  $\ldots$, $\Gamma_{2,\RR}^{\out}$, $\Gamma_{1,\RR}$, $\Gamma_{2,\RR}^{\inn}$, $\ldots$, $\Gamma_{m,\RR}^{\inn}$ be $2m-1$ ``nested'' contours from left to right on the half plane $\complexC_\RR$.  They are from $\infty\mathrm{e}^{-\pi\ii/5}$ to $\infty\mathrm{e}^{\pi\ii/5}$. Their superscripts ``$\out$'' and ``$\inn$'' could be understood with respect to the point $+\infty$. Note that the angles for the left contours and right contours are chosen differently. The choice of the angles guarantees super-exponential decay of the kernel along the contours even if $\TT_i=\TT_{i+1}$ for some $i$. See \cite{Liu2022,LiuZhang25} for more discussions on the choices of the angles. See Figure~\ref{fig:contours_limit} for an illustration of the contours. 
 
 	\begin{figure}[t]
 	\centering
 	\begin{tikzpicture}[scale=1]
 	\draw [line width=0.4mm,lightgray] (-0.5,0)--(6,0) node [pos=1,right,black] {$\realR$};
 	\draw [line width=0.4mm,lightgray] (2.5,-2)--(2.5,2) node [pos=1,above,black] {$\mathrm{i}\realR$};
 	\fill (2.5,0) circle[radius=2.5pt] node [below,shift={(0pt,-3pt)}] {$0$};

			\draw[red,thick] (0.5,1.4)--(1.5,0)--(0.5,-1.4);
			\node[text width=0.1cm,font=\bfseries] at (0.2,1.6) {\scriptsize $\Gamma_{1,\LL}$};
			\draw[blue,thick] (5.5,1.4)--(3.5,0)--(5.5,-1.4);
			\node[text width=0.1cm,font=\bfseries] at (5.3,1.6) {\scriptsize $\Gamma_{1,\RR}$};
			\draw[blue,thick] (0,1.4)--(1,0)--(0,-1.4);
			\node[text width=0.1cm,font=\bfseries] at (-0.3,-1.6) {\scriptsize $\Gamma_{2,\LL}^\mathrm{in}$};
			\draw[red,thick] (6,1.4)--(4,0)--(6,-1.4);
			\node[text width=0.1cm,font=\bfseries] at (5.8,-1.6) {\scriptsize $\Gamma_{2,\RR}^\mathrm{in}$};
			\draw[blue,thick] (1,1.4)--(2,0)--(1,-1.4);
			\node[text width=0.1cm,font=\bfseries] at (0.7,-1.6) {\scriptsize $\Gamma_{2,\LL}^\mathrm{out}$};
			\draw[red,thick] (5,1.4)--(3,0)--(5,-1.4);
			\node[text width=0.1cm,font=\bfseries] at (4.8,-1.6) {\scriptsize $\Gamma_{2,\RR}^\mathrm{out}$};

		\end{tikzpicture}

 	\caption{Illustration of the contours for $m=2$: $\mathrm{S}_1$ is the union of the red contours and $\mathrm{S}_2$ is the union of the blue contours.}\label{fig:contours_limit}
 \end{figure}
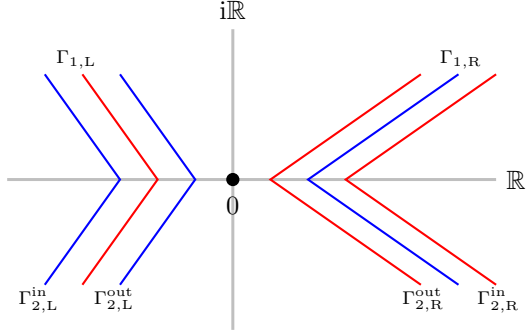

We define
\begin{equation*}
\Gamma_{\ell,\LL}:=\Gamma_{\ell,\LL}^\out\cup \Gamma_{\ell,\LL}^\inn, \qquad \Gamma_{\ell,\RR}:=\Gamma_{\ell,\RR}^\out\cup \Gamma_{\ell,\RR}^\inn,\qquad \ell=2,\ldots,m,
\end{equation*}
and
\begin{equation*}
\mathrm{S}_1:= \Gamma_{1,\LL} \cup \Gamma_{2,\RR} \cup \cdots \cup \begin{dcases}
\Gamma_{m,\LL}, & \text{ if $m$ is odd},\\
\Gamma_{m,\RR}, & \text{ if $m$ is even},
\end{dcases}
\end{equation*}
and 
\begin{equation*}
\mathrm{S}_2:= \Gamma_{1,\RR} \cup \Gamma_{2,\LL} \cup \cdots \cup \begin{dcases}
\Gamma_{m,\RR}, & \text{ if $m$ is odd},\\
\Gamma_{m,\LL}, & \text{ if $m$ is even}.
\end{dcases}
\end{equation*}

We introduce a measure on these contours. Let
\begin{equation}\label{eq:def_dmu}
\dd \mu(\zeta) = \dd \mu_{\boldsymbol{z}} (\zeta) :=
\begin{dcases}
\frac{-z_{\ell-1}}{1-z_{\ell-1}} \ddbarr{\zeta}, & \zeta \in \Gamma_{\ell,\LL}^\out \cup \Gamma_{\ell,\RR}^\out, \quad \ell=2,\ldots,m,\\
\frac{1}{1-z_{\ell-1}} \ddbarr{\zeta}, & \zeta\in \Gamma_{\ell,\LL}^\inn \cup \Gamma_{\ell,\RR}^\inn, \quad \ell=2,\ldots,m,\\
\ddbarr{\zeta}, & \zeta \in \Gamma_{1,\LL} \cup \Gamma_{1,\RR}.
\end{dcases}
\end{equation}

 Let $Q_1$ and $Q_2$ be as follows:,
\begin{equation}
\label{eq:Q}
Q_1(j) :=\begin{dcases}
1-z_{j}, & \text{ if $j$ is odd and $j<m$},\\
1-\frac{1}{z_{j-1}}, & \text{ if $j$ is even},\\
1,& \text{if $j=m$ is odd},
\end{dcases}
\qquad 
Q_2(j) :=\begin{dcases}
1-z_{j}, & \text{ if $j$ is even and $j<m$},\\
1-\frac{1}{z_{j-1}}, & \text{ if $j$ is odd and $j>1$},\\
1,& \text{if $j=m$ is even, or $j=1$}.
\end{dcases}
\end{equation}

\begin{defn}
	\label{def:D_general}
	We define
	\begin{equation*}
	\mathrm{D}_{\h}(z_1,\ldots,z_{m-1})=\det\left(\mathrm{I}-\mathrm{K}_1\mathrm{K}_{\h}\right),
	\end{equation*}
	where the operators
	\begin{equation*}
	\mathrm{K}_1: L^2(\mathrm{S}_2,\dd\mu) \to L^2(\mathrm{S}_1,\dd\mu),\qquad \mathrm{K}_{\h}: L^2(\mathrm{S}_1,\dd\mu)\to L^2(\mathrm{S}_2,\dd\mu)
	\end{equation*}
	are defined by their kernels
	\begin{equation}
	\label{eq:K1_lim}
	\mathrm{K}_1(\zeta,\zeta'):= \left(\delta_i(j) + \delta_i( j+ (-1)^i)\right) \frac{ \widehat{\mathrm{f}}_i(\zeta) }{\zeta-\zeta'} Q_1(j)
	\end{equation}
	and 
	\begin{equation}\label{eq:K2_lim}
	\mathrm{K}_{\h}(\zeta',\zeta):=
        \begin{cases}
            \left(\delta_j (i) + \delta_j(i - (-1)^j)\right) \frac{ \widehat{\mathrm{f}}_j(\zeta') }{\zeta-\zeta'} Q_2(i),\quad &i\geq 2,\\
            \delta_j(1)\widehat{\mathrm{f}}_1(\zeta')\limess_{\h}(\zeta',\zeta), & i=1,
        \end{cases}
	\end{equation}
	for any $\zeta\in (\Gamma_{i,\LL}\cup \Gamma_{i,\RR}) \cap \mathrm{S}_1$ and $\zeta'\in (\Gamma_{j,\LL}\cup \Gamma_{j,\RR}) \cap \mathrm{S}_2$ with $1\le i,j\le m$. Here the function
	\begin{equation}
	\label{eq:def_fi}
	\widehat{\mathrm{f}}_i(\zeta):=\begin{dcases}
	\mathrm{f}_i(\zeta), & \Re(\zeta)<0,\\
	\frac{1}{\mathrm{f}_{i}(\zeta)}, & \Re(\zeta)>0,
	\end{dcases}
	\end{equation}
	with
	\begin{equation}
	\label{eq:Fi}
	\mathrm{f}_i(\zeta) := \begin{dcases}
	\mathrm{e}^{-\frac{1}{3}(\TT_i-\TT_{i-1}) \zeta^3 + (\XX_i-\XX_{i-1}) \zeta^2 + (\HH_i-\HH_{i-1}) \zeta}, & i=2,\ldots,m,\\
	\mathrm{e}^{-\frac{1}{3}\TT_1 \zeta^3 + \XX_1 \zeta^2 + \HH_1 \zeta}, & i=1.
	\end{dcases}
	\end{equation}
    The kernel $\limess_{\h}(\zeta',\zeta)$ is defined in Section \ref{sec: def_limess}, see Definition \ref{def: limess}. 
\end{defn}

\subsubsection{The characteristic function $\limess_{\h}$}\label{sec: def_limess}
The dependency on the initial condition of the entire formula is through the function $\limess_{\h}$ defined on $\mathbb{C}_\RR\times \mathbb{C}_L$. Recall that we always use $\xi$ and $\eta$ to denote a variable on the $\Gamma$-contours on $\complexC_\LL$ and $\complexC_\RR$ respectively throughout the paper. Note that $\limess_{\h}$ is a function on $((\Gamma_{1,\LL}\cup \Gamma_{1,\RR}) \cap \mathrm{S}_2)\times ((\Gamma_{1,\LL}\cup \Gamma_{1,\RR})\cap \mathrm{S}_1)= \Gamma_{1,\RR}\times \Gamma_{1,\LL}$. So we will use the notation $\limess_{\h}(\eta,\xi)$ in the paper. The function $\limess_\h$ is defined using a Brownian motion hitting expectation as follows:
\begin{defn}\label{def: limess}
    Let $\B(t)$ be a two-sided Brownian motion with diffusivity constant $2$. Let $\h\in \ucc$ and $\ta_\pm$ be the hitting time of $\B$ to the hypograph of the positive (respectively, negative) part of $\h$, i.e., 
    \begin{equation}
        \ta_+:=\inf\{\XX\geq 0: \B(\XX)\leq \h(\XX)\},\quad \ta_-:=\sup\{\XX\leq 0: \B(\XX)\leq \h(\XX)\}.
    \end{equation}
    Then for any $\xi\in \mathbb{C}_\LL$ and $\eta\in \mathbb{C}_\RR$, we define 
    \begin{equation}\label{eq: limess}
    \begin{aligned}
        \limess_{\h}(\eta,\xi) &:=\int_{\mathbb{R}} \diff s\, \mathrm{e}^{+s\eta}\cdot  \mathbb{E}_{\B(0)=s}\left[\exp\left(-\ta_+ \xi^2-\B(\ta_+)\xi\right)\1_{\ta_+<\infty}\right]\\
        &\quad+\int_{\mathbb{R}} \diff s\, \mathrm{e}^{-s\xi}\cdot  \mathbb{E}_{\B(0)=s}\left[\exp\left(\ta_- \eta^2+\B(\ta_-)\eta\right)\1_{\ta_->-\infty}\right]\\
        &\quad-\int_{\mathbb{R}} \diff s\,   \mathbb{E}_{\B(0)=s}\left[\exp\left(-\ta_+ \xi^2-\B(\ta_+)\xi\right)\1_{\ta_+<\infty}\right]\cdot\mathbb{E}_{\B(0)=s}\left[\exp\left(\ta_- \eta^2+\B(\ta_-)\eta\right)\1_{\ta_->-\infty}\right].
    \end{aligned}
    \end{equation}
\end{defn}

We remark that when $\h(\alpha)=-\infty \mathbf{1}_{\alpha\ne 0}$ is the narrow wedge initial condition, the above characteristic function can be evaluated as $\limess_{\h}(\eta,\xi) = \frac{1}{\eta-\xi}$, which matches the corresponding function in \cite{Liu2022} for the narrow wedge initial condition. 

\begin{prop}\label{prop:convergence_lim}
     For $\h \in \ucc$, the function $\limess_{\h}(\eta,\xi)$ is well-defined and analytic for $\zeta\in \CL$ and $\eta\in \CR$. Moreover, for any $L>0$ and $\beta\in \mathbb{R}$ such that $\mathrm{supp}(\h)\subset [-L,L]$ and $\max_{\XX\in \mathbb{R}}\h(\XX)\leq \beta$, we have 
  %  \begin{equation}
   %     |\limess_{\h}(\eta,\xi)|\leq 6\cdot \mathrm{e}^{(\beta+1)\mathrm{Re}(\eta-\xi)+ L(|\xi|^2+|\eta|^2)}\cdot \left(1+\frac{1}{\mathrm{Re}(\eta)}+\frac{1}{\mathrm{Re}(-\xi)}\right).
  %  \end{equation}
       \begin{equation}
       \label{eq:bound_chi_h}
        |\limess_{\h}(\eta,\xi)|\leq \mathrm{e}^{(\beta+1)\mathrm{Re}(\eta-\xi)+ 2L(|\xi|^2+|\eta|^2)}\cdot \left(\frac{2}{\mathrm{Re}(\eta)}+\frac{2}{\mathrm{Re}(-\xi)} +8L + \frac{2^{5/2}L^{3/2}}{\sqrt{\pi}}\right).
    \end{equation}
\end{prop}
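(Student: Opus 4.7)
The plan is to bound each of the three integrals in \eqref{eq: limess} separately by establishing two complementary estimates on the Brownian hitting expectations---a translation-sensitive bound valid for all $s$, and a Gaussian-decay bound valid for $s>\beta$---and combining them after splitting the $s$-integration at $s=\beta$. Analyticity of $\limess_\h$ on $\CL\times\CR$ will then follow automatically, since each integrand is jointly entire in $(\xi,\eta)$ and the resulting estimates are locally uniform on compact subsets, so Morera's theorem and differentiation under the integral apply.

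Writing $r=-\Re(\xi)>0$ and $q=\Re(\eta)>0$, I would introduce the translation $\tilde\B(\XX):=\B(\XX)-s$, which under $\Pr_{\B(0)=s}$ is a Brownian motion with diffusivity $2$ starting at $0$. Since $\h\equiv-\infty$ outside $[-L,L]$, the hitting time $\ta_+$ lies in $[0,L]$ whenever finite, so $\tilde\B(\ta_+)\le\sup_{\XX\in[0,L]}\tilde\B(\XX)$. Combining $|\ee^{-\ta_+\xi^2}|\le\ee^{L|\xi|^2}$ with the classical reflection-principle moment bound $\Ex[\ee^{r\sup_{[0,L]}\tilde\B}]\le 2\ee^{r^2 L}$ produces the \emph{translation-sensitive} estimate
\begin{equation*}
    \bigl|\Ex_{\B(0)=s}[\ee^{-\ta_+\xi^2-\B(\ta_+)\xi}\ind_{\ta_+<\infty}]\bigr|\ \le\ 2\,\ee^{2L|\xi|^2-s\Re(\xi)},\qquad s\in\realR.
\end{equation*}
Alternatively, using $\B(\ta_+)\le\h(\ta_+)\le\beta$ together with the reflection tail $\Pr_{\B(0)=s}[\ta_+<\infty]\le\ee^{-(s-\beta)^2/(4L)}$ for $s>\beta$ yields the \emph{Gaussian-decay} estimate
\begin{equation*}
    \bigl|\Ex_{\B(0)=s}[\cdots]\bigr|\ \le\ \ee^{L|\xi|^2-\beta\Re(\xi)-(s-\beta)^2/(4L)},\qquad s\ge\beta.
\end{equation*}
The same two bounds, with $(\xi,r)$ replaced by $(-\eta,q)$, hold for the $\ta_-$ expectation by time-reversal symmetry.

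For the first integral I split at $\beta$: on $(-\infty,\beta]$, pairing $|\ee^{s\eta}|=\ee^{sq}$ with the translation-sensitive bound gives an integrand proportional to $\ee^{s\Re(\eta-\xi)}$, integrable because $\Re(\eta-\xi)>0$, contributing $\ee^{\beta\Re(\eta-\xi)}/\Re(\eta-\xi)$ times a constant; on $[\beta,\infty)$, the Gaussian factor dominates $\ee^{sq}$ and, after completing the square in $sq-(s-\beta)^2/(4L)$, contributes a term of order $\sqrt{L}\,\ee^{Lq^2+\beta\Re(\eta-\xi)}$. The second integral is symmetric. For the third integral, which lacks any exponential prefactor in $s$, the translation-sensitive bound applied to \emph{both} expectations multiplies to $\ee^{s\Re(\eta-\xi)}$ on $(-\infty,\beta]$ (again integrable), while on $[\beta,\infty)$ the product of the two Gaussian bounds decays as $\ee^{-(s-\beta)^2/(2L)}$.

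Summing the three contributions, factoring out the common $\ee^{2L(|\xi|^2+|\eta|^2)+\beta\Re(\eta-\xi)}$, and applying $\Re(\eta-\xi)\ge\max(\Re(\eta),\Re(-\xi))$ together with the AM--GM inequality $4/\Re(\eta-\xi)\le 1/\Re(\eta)+1/\Re(-\xi)$ (from $4ab\le(a+b)^2$), brings the estimate into the claimed form \eqref{eq:bound_chi_h}, after absorbing any residual numerical slack into the multiplicative factor $\ee^{\Re(\eta-\xi)}\ge 1$ carried by the sharper exponent $(\beta+1)\Re(\eta-\xi)$. The main obstacle is the third integral: the naive uniform bound $|\Ex_s[\cdots]|\le\ee^{L|\xi|^2-\beta\Re(\xi)}$ is only $O(1)$ in $s$ and not integrable over $(-\infty,\beta]$, and integrability there is delivered only by the $s$-dependence of the translation-sensitive bound, whose combination with its symmetric $\ta_-$ counterpart produces the decisive $\ee^{s\Re(\eta-\xi)}$ decay as $s\to-\infty$.
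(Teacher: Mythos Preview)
Your approach is essentially the paper's: bound each of the three integrals in \eqref{eq: limess} by splitting the $s$-integration, using running-maximum moments (your ``translation-sensitive'' bound) on the low-$s$ range and a reflection-principle Gaussian tail on the high-$s$ range; this correctly yields well-definedness, analyticity, and a bound of the required qualitative form. One caveat: your final step of ``absorbing residual numerical slack into $\ee^{\Re(\eta-\xi)}$'' to match the exact constants in \eqref{eq:bound_chi_h} does not work uniformly---your Gaussian integrals produce $O(\sqrt{L})$ terms rather than the $8L+2^{5/2}L^{3/2}/\sqrt{\pi}$ of \eqref{eq:bound_chi_h}, and for instance at $L=1$ with $\Re(\eta),\Re(-\xi)\to 0$ your constants exceed the target. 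The paper obtains its precise $L$-powers by using the sharper Mills-ratio tail $\Pr_{\B(0)=s}(\sig_+\le L)\le\tfrac{2\sqrt{L}}{\sqrt{\pi}(s-\beta)}\ee^{-(s-\beta)^2/(4L)}$ on $[\beta+1,\infty)$, which converts the $\sqrt{L}$ to $L$ after the Gaussian integral; this is cosmetic, and your bound would serve equally well in all downstream applications.
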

\begin{proof}
Assume $\mathrm{supp}(\h)\subset [-L,L]$ and $\max_{\XX\in \mathbb{R}}\h(\XX)\leq \beta$. We will show that the integrals on the right-hand side of \eqref{eq: limess} are absolutely convergent and uniformly bounded by the right-hand side of \eqref{eq:bound_chi_h}. Therefore $\limess_\h$ is well defined and analytic on $\CR\times\CL$.

We consider the first integral on the right-hand side of \eqref{eq: limess}. Note that $\mathbf{1}_{\tau_+<\infty}=\mathbf{1}_{\tau_+\le L}$, and $\mathrm{Re}(-\ta_+ \xi^2-\B(\ta_+)\xi)$ is bounded by $L|\xi^2|+\beta\mathrm{Re}(-\xi)$ when $\tau_+\le L$. These facts imply
\begin{equation}
\label{eq:bound_Kess02}
\begin{split}
    &\int_{\realR} \diff s\, |\mathrm{e}^{+s\eta}| \cdot  \mathbb{E}_{\B(0)=s}\left[|\exp\left(-\ta_+ \xi^2-\B(\ta_+)\xi\right)|\1_{\ta_+<\infty}\right]\\
    &\leq \mathrm{e}^{L|\xi^2|-\beta\mathrm{Re}(\xi)} \int_{\realR} \diff s\, \mathrm{e}^{s\mathrm{Re}(\eta)}\cdot \prob_{\B(0)=s}(\ta_+\leq L)\\
    &\leq \mathrm{e}^{L|\xi^2|-\beta\mathrm{Re}(\xi)} \int_{\realR} \diff s\, \mathrm{e}^{s\mathrm{Re}(\eta)}\cdot \prob_{\B(0)=s}(\sig_+\leq L),
    \end{split}
\end{equation}
where we used the following fact that $\ta_+\geq \sig_+$ with $\sig_+$ being the following new stopping time
\begin{equation*}
        \sig_+:=\inf\{\XX\geq 0: \B(\XX)\leq \beta\}.
\end{equation*}
Note that the distribution of $\sig_+$ can be computed using the reflection principle, see, e.g., \cite[(7.4.4)]{durretpte}. When $s\ge \beta+1$, we can estimate
\begin{equation}
    \begin{split}
        \prob_{\B(0)=s}(\sig_+\leq L)&=2\mathbb{P}_{\B(0)=s}\left(\B(L) \leq \beta\right) \\
        & = \frac{1}{\sqrt{\pi L}}\int_{-\infty}^{\beta} \mathrm{e}^{-\frac{(y-s)^2}{4L}} \diff y
         = \frac{1}{\sqrt{\pi L}} \mathrm{e}^{-\frac{(\beta-s)^2}{4L}} \int_{-\infty}^0 e^{-\frac{(\beta-s)z}{2L} -\frac{z^2}{4L}}\diff z\\
        &\leq \frac{1}{\sqrt{\pi L}} \mathrm{e}^{-\frac{(\beta-s)^2}{4L}} \int_{-\infty}^0 e^{-\frac{(\beta-s)z}{2L} }\diff z = \frac{2\sqrt{L}}{\sqrt{\pi}(s-\beta)}\mathrm{e}^{-\frac{(\beta-s)^2}{4L}} \\
        &\leq \frac{2\sqrt{L}}{\sqrt{\pi}}\mathrm{e}^{-\frac{(\beta-s)^2}{4L}}.
    \end{split}
    \end{equation} 
    Therefore, 
    \begin{equation}
    \label{eq:bound_Kess03}
        \begin{split}
            \int_{\realR} \diff s\, \mathrm{e}^{s\mathrm{Re}(\eta)}\cdot \prob_{\B(0)=s}(\sig_+\leq L)
            &\leq \int_{-\infty}^{\beta+1}\diff s\, \mathrm{e}^{s\mathrm{Re}(\eta)} + \frac{2\sqrt{L}}{\sqrt{\pi}}\int_{\beta+1}^\infty\diff s\, \mathrm{e}^{s\mathrm{Re}(\eta)-\frac{(\beta-s)^2}{4L}} \\
            &\leq \frac{\mathrm{e}^{(\beta+1)\mathrm{\Re}(\eta)}}{\mathrm{Re}(\eta)}+4L \mathrm{e}^{L(\mathrm{Re}(\eta))^2 + \beta\mathrm{Re}(\eta)}.
        \end{split}
    \end{equation}
By inserting this bound to \eqref{eq:bound_Kess02} and noting that $(\mathrm{Re}(\eta))^2\leq |\eta^2|$, we obtain
\begin{equation}
\label{eq:kess_1st_integral_bound}
    \int_{\realR} \diff s\, |\mathrm{e}^{+s\eta}| \cdot  \mathbb{E}_{\B(0)=s}\left[|\exp\left(-\ta_+ \xi^2-\B(\ta_+)\xi\right)|\1_{\ta_+<\infty}\right] \leq \mathrm{e}^{(\beta+1)\mathrm{Re}(\eta-\xi)+L(|\xi|^2+|\eta|^2) }\cdot \left(4L+\frac{1}{\mathrm{Re}(\eta)}\right).
\end{equation}

The second integral on the right-hand side of \eqref{eq: limess} can be handled similarly. We get
\begin{equation}
\label{eq:kess_2nd_integral_bound}
     \int_{\mathbb{R}} \diff s\, |\mathrm{e}^{-s\xi}|\cdot  \mathbb{E}_{\B(0)=s}\left[|\exp\left(\ta_- \eta^2+\B(\ta_-)\eta\right)|\1_{\ta_->-\infty}\right] \leq \mathrm{e}^{(\beta+1)\mathrm{Re}(\eta-\xi)+L(|\xi|^2+|\eta|^2) }\cdot \left(4L+\frac{1}{\mathrm{Re}(-\xi)}\right).
    \end{equation}

For the last integral, we need to slightly modify the estimate. We need a different estimate of the integral when $s\leq \beta+1$. We still have $\tau_+\leq L$ when $\tau_+<\infty$, and $\tau_-\geq -L$ when $\tau_->-\infty$. Also note that $\B(\tau_+)\leq \max_{t\in[0,L]}\B(t)$ and $\B(\tau_-)\leq \max_{t\in[-L,0]}\B(t)$. Thus we have
\begin{equation}
    \label{eq:kess_3rd_integral_bound1}
    \begin{split}
        &\int_{-\infty}^{\beta+1} \diff s\,   \mathbb{E}_{\B(0)=s}\left[|\exp\left(-\ta_+ \xi^2-\B(\ta_+)\xi\right)|\1_{\ta_+<\infty}\right]\cdot\mathbb{E}_{\B(0)=s}\left[|\exp\left(\ta_- \eta^2+\B(\ta_-)\eta\right)|\1_{\ta_->-\infty}\right]\\
        &\leq \mathrm{e}^{L(|\xi^2|+|\eta^2|)} \int_{-\infty}^{\beta+1} \diff s\,  \mathbb{E}_{\B(0)=s}  \left[\mathrm{e}^{\mathrm{Re}(-\xi)\max_{t\in[0,L]}\B(t)}\right]\cdot \mathbb{E}_{\B(0)=s}  \left[\mathrm{e}^{  \mathrm{Re}(\eta)\max_{t\in[-L,0]}\B(t)}\right]\\
        &=\mathrm{e}^{L(|\xi^2|+|\eta^2|)}\int_{-\infty}^{\beta+1} \diff s\,  \mathrm{e}^{s\mathrm{Re}(\eta-\xi)}\cdot \mathbb{E}_{\B(0)=0}  \left[\mathrm{e}^{\mathrm{Re}(-\xi)\max_{t\in[0,L]}\B(t) }\right]\cdot \mathbb{E}_{\B(0)=0}  \left[\mathrm{e}^{\mathrm{Re}(\eta)\max_{t\in[-L,0]}\B(t)}\right] \\
        &= \mathrm{e}^{L(|\xi^2|+|\eta^2|)} \cdot \frac{\mathrm{e}^{(\beta+1)\mathrm{Re}(\eta-\xi)}}{\mathrm{Re}(\eta-\xi)}\cdot \left(\frac{1}{\sqrt{\pi L}}\int_0^\infty \mathrm{e}^{-\frac{x^2}{4L}+\mathrm{Re}{(-\xi)}x}\diff x \right)
        \cdot \left(\frac{1}{\sqrt{\pi L}}\int_0^\infty \mathrm{e}^{-\frac{x^2}{4L}+\mathrm{Re}(\eta)x}\diff x \right)\\
        &\leq \mathrm{e}^{L(|\xi^2|+|\eta^2|)} \cdot \frac{\mathrm{e}^{(\beta+1)\mathrm{Re}(\eta-\xi)}}{\mathrm{Re}(\eta-\xi)}\cdot \mathrm{e}^{L(\mathrm{Re}{(-\xi}))^2 +L(\mathrm{Re}(\eta))^2}\\
        &\leq \mathrm{e}^{2L(|\xi^2|+|\eta^2|)+(\beta+1)\mathrm{Re}(\eta-\xi)} \cdot\left(\frac{1}{\mathrm{Re}(-\xi)} +\frac{1}{\mathrm{Re}(\eta)}\right),
    \end{split}
\end{equation}
where we used the fact that $\max_{t\in[0,L]}\B(t)$ and $\max_{t\in[-L,0]}\B(t)$ have the same distribution as $|\B(L)|$ when $\B(0)=0$.
When $s>\beta+1$, we use the following bound which is similar to \eqref{eq:bound_Kess02} and \eqref{eq:bound_Kess03},
\begin{equation}
\label{eq:kess_3rd_integral_bound2}
    \begin{split}
        &\int_{\beta+1}^\infty \diff s\,   \mathbb{E}_{\B(0)=s}\left[|\exp\left(-\ta_+ \xi^2-\B(\ta_+)\xi\right)|\1_{\ta_+<\infty}\right]\cdot\mathbb{E}_{\B(0)=s}\left[|\exp\left(\ta_- \eta^2+\B(\ta_-)\eta\right)|\1_{\ta_->-\infty}\right]\\
        &\leq \mathrm{e}^{\beta\mathrm{Re}(\eta-\xi)+ L(|\xi^2|+|\eta|^2)}\int_{\beta+1}^\infty \diff s\, \prob_{\B(0)=s}(\sig_+\leq L)\prob_{\B(0)=s}(\sig_-\geq -L) \\
        &\leq \mathrm{e}^{\beta\mathrm{Re}(\eta-\xi)+ L(|\xi^2|+|\eta|^2)} \cdot \frac{4L}{\pi}\int_{\beta+1}^\infty \diff s\, \mathrm{e}^{-\frac{(\beta-s)^2}{2L}}\\
        &\leq \mathrm{e}^{\beta\mathrm{Re}(\eta-\xi)+ L(|\xi^2|+|\eta|^2)} \cdot  \frac{2^{5/2}L^{3/2}}{\sqrt{\pi}},
    \end{split}
\end{equation}
where $\sig_-:=\sup\{\alpha\le 0: \B(\alpha)\le \beta\}$ is a stopping time which has the same law as $-\sig_+$. 
By combining all the bounds \eqref{eq:kess_1st_integral_bound}, \eqref{eq:kess_2nd_integral_bound}, \eqref{eq:kess_3rd_integral_bound1} and \eqref{eq:kess_3rd_integral_bound2}, we prove the desired bound \eqref{eq:bound_chi_h}.

\end{proof}

For convenience, we were using the origin $0$ as the starting point of the Brownian motions in the hitting expectation formula. The next proposition shows that this is not necessary, and one can start with any point on the real line $\mathbb{R}$ and get the same characteristic function $\limess_\h$.

\begin{prop}\label{prop: shift}
    One can change the starting point of the Brownian motions in the definition of the function $\limess_{\h}$ defined in \eqref{eq: limess}.  More precisely, for any $\a\in \mathbb{R}$ one has
    \begin{equation}\label{eq: limess2}
    \begin{aligned}
        \limess_{\h}(\eta,\xi) &= \mathrm{e}^{\a\eta^2}\int_{\mathbb{R}} \diff s\, \mathrm{e}^{+s\eta}\cdot  \mathbb{E}_{\B(\a)=s}\left[\exp\left(-\ta_+ \xi^2-\B(\ta_+)\xi\right)\1_{\ta_+<\infty}\right]\\
        &\quad+\mathrm{e}^{-\a \xi^2}\int_{\mathbb{R}} \diff s\, \mathrm{e}^{-s\xi}\cdot  \mathbb{E}_{\B(\a)=s}\left[\exp\left(\ta_- \eta^2+\B(\ta_-)\eta\right)\1_{\ta_->-\infty}\right]\\
        &\quad-\int_{\mathbb{R}} \diff s\,  \mathbb{E}_{\B(\a)=s}\left[\exp\left(\ta_- \eta^2+\B(\ta_-)\eta-\ta_+ \xi^2-\B(\ta_+)\xi\right)\1_{|\ta_\pm|<\infty}\right],
    \end{aligned}
    \end{equation}
    where the hitting times $\ta_\pm$ are now defined as 
    \begin{equation}\label{eq: hittingtimes}
        \ta_+:=\inf\{\XX\geq \a: \B(\XX)\leq \h(\XX)\},\quad \ta_-:=\sup\{\XX\leq \a: \B(\XX)\leq \h(\XX)\}.
    \end{equation}
    In particular if $\mathrm{supp}(\h)\subset [-L,L]$ for some $L>0$, then 
        \begin{equation}\label{eq: limess3}
    \begin{aligned}
        \limess_{\h}(\eta,\xi) &=\mathrm{e}^{-L\eta^2}\int_{\mathbb{R}} \diff s\, \mathrm{e}^{s\eta}\cdot  \mathbb{E}_{\B(-L)=s}\left[\exp\left(-\ta_+ \xi^2-\B(\ta_+)\xi\right)\1_{\ta_+\leq L}\right].
    \end{aligned}
    \end{equation}
\end{prop}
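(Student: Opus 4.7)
The plan is to show that both the formula in Definition~\ref{def: limess} (the case $\a=0$) and the right-hand side of \eqref{eq: limess2} reduce to the single-hitting expression \eqref{eq: limess3}. Throughout, $\ta_\pm^{(b)}$ denotes the hitting times from \eqref{eq: hittingtimes} with $\a$ replaced by $b$.

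The main analytic tool is a Gaussian-Markov identity. Combining the Markov property of $\B$ at a reference time $T$ with the Gaussian integral
\begin{equation*}
\int_\realR \dd s\, \mathrm{e}^{s\eta}\, p_t(s,s') = \mathrm{e}^{s'\eta + t\eta^2}, \qquad p_t(s,s') = \tfrac{1}{\sqrt{4\pi t}}\mathrm{e}^{-(s-s')^2/(4t)},
\end{equation*}
one obtains, for any bounded functional $G$ of $\B|_{[T,\infty)}$ with $T\ge\max(\a,\a')$,
\begin{equation*}
\int_\realR \dd s\, \mathrm{e}^{s\eta}\, \mathbb{E}_{\B(\a)=s}[G] = \mathrm{e}^{(\a'-\a)\eta^2}\int_\realR \dd s\, \mathrm{e}^{s\eta}\, \mathbb{E}_{\B(\a')=s}[G],
\end{equation*}
together with a symmetric backward version for $G$ depending only on $\B|_{(-\infty,T]}$ with $T\le\min(\a,\a')$, and analogous versions with $\mathrm{e}^{-s\xi},\xi^2$ weights. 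With Lebesgue weight (no exponential factor), $\int \dd s\, \mathbb{E}_{\B(\a)=s}[\cdot]$ is outright $\a$-invariant for any such functional.

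The first reduction is immediate. Choose $L>0$ with $\h(\XX)=-\infty$ for every $\XX\notin(-L,L)$ and set $\a=-L$ in \eqref{eq: limess2}. Since $\{\ta_-^{(-L)}>-\infty\}$ has probability zero (it would force $\B(\XX)\le\h(\XX)=-\infty$ for some $\XX\le -L$), the second and third summands of \eqref{eq: limess2} vanish, and the first is exactly \eqref{eq: limess3} after observing $\1_{\ta_+^{(-L)}<\infty}=\1_{\ta_+^{(-L)}\le L}$. The analogous reduction from Definition~\ref{def: limess} is more delicate: applying the forward Gaussian-Markov identity to the first summand $I_1$ of \eqref{eq: limess} yields
\begin{equation*}
I_1 = \mathrm{e}^{-L\eta^2}\int \dd s\, \mathrm{e}^{s\eta}\, \mathbb{E}_{\B(-L)=s}\bigl[\mathrm{e}^{-\ta_+^{(0)}\xi^2-\B(\ta_+^{(0)})\xi}\1_{\ta_+^{(0)}<\infty}\bigr],
\end{equation*}
so the remaining task is to establish $(\text{RHS of \eqref{eq: limess3}})-I_1 = I_2-I_3$.

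This last task is the technical heart. The difference on the left is an integral under $\mathbb{E}_{\B(-L)=s}$ whose integrand is supported on paths having a first hit strictly before $0$ (else $\ta_+^{(-L)}=\ta_+^{(0)}$ and the integrand vanishes). On this event, the strong Markov property at $\ta_+^{(-L)}$ expresses the subsequent trajectory as an independent fresh Brownian motion from the hitting point; a careful path decomposition---distinguishing hits of the fresh Brownian motion in the intervals $(\ta_+^{(-L)},0]$ and $(0,L]$---combined with the backward Gaussian-Markov identity used to recast the resulting expectations in the form of $I_2,I_3$, matches the single-hit contribution with $I_2$ (paths having $\ta_-^{(0)}>-\infty$) and the joint-hit contribution with $I_3$ (removing the double count of paths satisfying both $\ta_-^{(0)}>-\infty$ and $\ta_+^{(0)}<\infty$). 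The bookkeeping is intricate but routine; absolute convergence of all relevant integrals follows from the same Gaussian estimates used in the proof of Proposition~\ref{prop:convergence_lim}.
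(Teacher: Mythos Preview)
Your approach is genuinely different from the paper's and, in fact, more direct. The paper first proves the identity $\limess_\h^\a=\limess_\h$ only for $\h\in\mnw$ (finitely many narrow wedges), via an induction over the successive wedge locations $\nXX_1>\nXX_2>\cdots$, and then extends to all $\h\in\ucc$ by a density argument (Propositions~\ref{prop: limess_convergence} and \ref{prop: approximation}). You instead attack general $\h\in\ucc$ in one shot, reducing everything to the one-sided formula \eqref{eq: limess3}. This avoids the approximation step entirely and is conceptually cleaner; the price is that the ``technical heart'' is less transparent than the wedge-by-wedge induction.

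Two comments are in order. First, your logical structure only establishes \eqref{eq: limess2} for the two values $\a=-L$ (trivially) and $\a=0$ (via the technical heart). The proposition claims it for \emph{every} $\a$. This is easily repaired: the same technical-heart argument, with $0$ replaced by an arbitrary $\a\in[-L,L]$, shows that each $\limess_\h^\a$ equals \eqref{eq: limess3}; you should say so explicitly.

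Second, your description of the technical heart is too compressed to count as a proof, and the phrase ``intricate but routine'' undersells the one nontrivial idea. The clean way to carry out your decomposition is \emph{not} an iterated strong Markov at successive hits, but rather two applications of the optional-stopping/strong-Markov identity for the exponential martingales: (i) forward strong Markov at $\ta_+^{(-L)}$ on $\{\ta_+^{(-L)}\le 0\}$ gives
\[
\mathbb{E}_{\B(-L)=s}\bigl[\mathrm{e}^{-\ta_+^{(-L)}\xi^2-\B(\ta_+^{(-L)})\xi}\1_{\ta_+^{(-L)}\le 0}\bigr]
=\mathbb{E}_{\B(-L)=s}\bigl[\mathrm{e}^{-\B(0)\xi}\1_{\ta_+^{(-L)}\le 0}\bigr],
\]
since $\mathrm{e}^{-t\xi^2-\B(t)\xi}$ is a martingale; and (ii) backward strong Markov at $\ta_-^{(0)}$ (the first backward hit) on a backward Brownian motion from $(0,s_0)$ gives
\[
\mathrm{e}^{-L\eta^2}\,\mathbb{E}_{\B(0)=s_0}\bigl[\mathrm{e}^{\B(-L)\eta}\1_{\ta_-^{(0)}>-\infty}\bigr]
=\mathbb{E}_{\B(0)=s_0}\bigl[\mathrm{e}^{\ta_-^{(0)}\eta^2+\B(\ta_-^{(0)})\eta}\1_{\ta_-^{(0)}>-\infty}\bigr]=E^-(0,s_0).
\]
With (i), your Gaussian--Markov identity, and a conditioning on $\B(0)=s_0$, the difference $(\text{RHS of \eqref{eq: limess3}})-I_1$ splits exactly as $X_1-X_2$ with $X_1=\int\mathrm{e}^{-s_0\xi}[\cdot]\dd s_0$ and $X_2=\int E^+(0,s_0)[\cdot]\dd s_0$, where the bracket in both cases is the left side of (ii). Identity (ii) then gives $X_1=I_2$ and $X_2=I_3$ directly, with no recursive bookkeeping. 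Spelling this out would make your argument both complete and visibly shorter than the paper's $\mnw$-plus-density route.
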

The proof of Proposition \ref{prop: shift} will be given in Section \ref{sec: shift}. We point out that it is purely a result about the Brownian motion, but we are not able to find it in the literature.

    \subsection{An equivalent series expansion formula}\label{sec: equiv_def}
    Due to the block diagonal structure of the kernel $\mathbf{K}_1$ and $\mathbf{K}_{\h}$, the Fredholm determinant $\mathrm{D}_{\h}(z_1,\ldots,z_{m-1})$ admits a series expansion, which we will be working with more frequently in the subsequent sections. To introduce the formula, we first introduce a few notation. Given $W=(w_1,\ldots,w_n)\in \mathbb{C}^n$ and $W'=(w_1',\ldots,w_m')\in \mathbb{C}^m$, we denote 
    \begin{equation}
        W\sqcup W':= (w_1,\ldots,w_n, w_1',\ldots,w_m')\in \mathbb{C}^{m+n}.
    \end{equation}
    Assume in addition that $n=m$ and $w_i\neq w_j'$ for all $1\leq i,j\leq n$. We denote 
    \begin{equation}
        \mathrm{C}(W;W'):= \det\begin{bmatrix}
            \displaystyle\frac{1}{w_i-w_j'}
        \end{bmatrix}_{1\leq i,j\leq n} = (-1)^{\frac{n(n-1)}{2}}\frac{\prod_{1\leq i<j\leq n}(w_j-w_i)(w_j'-w_i')}{\prod_{1\leq i,j\leq n}(w_i-w_j')},
    \end{equation}
    which is the usual Cauchy determinant. The Cauchy determinant $\mathrm{C}(W
    \sqcup W'; \widehat{W}\sqcup\widehat{W}')$ is defined in the same way with the combined variables $W\sqcup W'$ and another set of variables $\widehat{W}\sqcup\widehat{W}'$ with the same dimension as $W\sqcup W'$.
    
    \begin{prop}[Series expansion for $\DD_{\h}(z_1,\ldots,z_{m-1})$]\label{prop: series}
        Alternatively, we have
        \begin{equation}
            \DD_{\h}(z_1,\ldots,z_{m-1}) = \sum_{\substack{n_\ell\geq 0,\\1\leq \ell \leq m}}\frac{1}{(n_1!\cdots n_m!)^2}\mathrm{D}^{(\mathbf{n})}_{\h}(z_1,\ldots,z_{m-1}),
        \end{equation}
        where $\mathbf{n}= (n_1,\ldots,n_m)\in (\mathbb{Z}_{\geq 0})^m$, and
	\begin{equation}\label{eq: Dn_lim}
	\begin{split}
	&\mathrm{D}^{(\mathbf{n})}_{\h}(z_1,\ldots,z_{m-1})=\mathrm{D}^{(\mathbf{n})}_{\h}(z_1,\ldots,z_{m-1}; (\XX_1,\TT_1,\HH_1),\ldots,(\XX_m,\TT_m,\HH_m))\\
	&= \prod_{\ell=1}^{m-1} \left(1-z_\ell\right)^{n_\ell}\left(1-z_\ell^{-1}\right)^{n_{\ell+1}}\left(\prod_{\ell=1}^m\prod_{i_\ell=1}^{n_\ell} \int_{\Gamma_{\ell,\LL}}\mathrm{d}\mu_{\boldsymbol{z}}(\xi_{i_\ell}^{(\ell)})\int_{\Gamma_{\ell,\RR}}\mathrm{d}\mu_{\boldsymbol{z}}(\eta_{i_\ell}^{(\ell)})\right)\prod_{\ell=1}^m\prod_{i_\ell=1}^{n_\ell} \frac{\mathrm{f}_\ell(\xi_{i_\ell}^{(\ell)})}{ \mathrm{f}_\ell(\eta_{i_\ell}^{(\ell)})}\\
	&\quad\cdot  \det\begin{bmatrix}
	    \displaystyle\chi_{\h}(\eta^{(1)}_i,\xi^{(1)}_j)
	\end{bmatrix}_{1\leq i,j\leq n_1}\cdot \prod_{\ell=1}^{m-1}\mathrm{C}\left(\xib^{(\ell)}\sqcup \etab^{(\ell+1)}; \etab^{(\ell)}\sqcup \xib^{(\ell+1)}\right)\cdot \mathrm{C}(\xib^{(m)};\etab^{(m)}),
	\end{split}
        \end{equation}
        with $\xib^{(\ell)}=(\xi_1^{(\ell)},\ldots, \xi_{n_\ell}^{(\ell)})$ and $\etab^{(\ell)}=(\eta_1^{(\ell)},\ldots, \eta_{n_\ell}^{(\ell)})$, for $1\leq \ell\leq m$. Here 
        \begin{equation*}
            \mathrm{f}_\ell(w):= \mathrm{e}^{-\frac{1}{3}(\TT_\ell-\TT_{\ell-1})w^3+(\XX_\ell-\XX_{\ell-1})w^2+(\HH_\ell-\HH_{\ell-1})w},
        \end{equation*}
        for $1\leq \ell\leq m$, with the convention that $\TT_0=\XX_0=\HH_0:=0$.
    \end{prop}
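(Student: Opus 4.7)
The plan is to derive the series expansion by progressively expanding $\DD_\h=\det(\mathrm{I}-\mathrm{K}_1\mathrm{K}_\h)$ using the standard Fredholm series together with the Andréief (Cauchy-Binet) identity, and then exploiting the block structure of the two kernels. First, I would write
\begin{equation*}
\DD_\h=\sum_{N\ge 0}\frac{(-1)^N}{N!}\int_{\mathrm{S}_1^N}\det\left[(\mathrm{K}_1\mathrm{K}_\h)(\zeta_a,\zeta_b)\right]_{a,b=1}^N\prod_{a=1}^N\dd\mu(\zeta_a),
\end{equation*}
and apply Andréief to each determinant in the integrand, producing a double integral over $\mathrm{S}_1^N\times\mathrm{S}_2^N$ of $\det[\mathrm{K}_1(\zeta_a,\zeta'_c)]\det[\mathrm{K}_\h(\zeta'_c,\zeta_b)]$ with overall prefactor $(-1)^N/(N!)^2$.

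Next I would exploit the block structure. The indicator $\delta_i(j)+\delta_i(j+(-1)^i)$ in the definition of $\mathrm{K}_1$ shows that its matrix entries vanish unless the pair indices $i,j$ satisfy $\{i,j\}\subseteq\{2k-1,2k\}$ for some $k$; similarly, for $i\ge 2$ the kernel $\mathrm{K}_\h$ is supported on $\{2k,2k+1\}$, and the pair-$1$ block of $\mathrm{K}_\h$ carries the kernel $\chi_\h$. Ordering the variables by pair index makes both matrices block diagonal, and non-vanishing of the two determinants forces the number $n_\ell$ of variables on pair $\ell$ to agree for both the $\zeta$'s and the $\zeta'$'s. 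Using that the integrand is separately symmetric in the $\zeta$'s and the $\zeta'$'s, splitting each integration by pair assignment introduces multinomial factors $\binom{N}{n_1,\ldots,n_m}^2$ that combine with the $1/(N!)^2$ prefactor to yield the overall combinatorial factor $1/\prod_\ell(n_\ell!)^2$.

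Within each super-block, I would factor out the $\widehat{\mathrm{f}}_\ell$ weights row by row and the $Q_1,Q_2$ weights column by column; the remaining matrix has entries of the form $1/(\zeta-\zeta')$ and evaluates to a Cauchy determinant. Specifically, super-block $\{2k-1,2k\}$ of $\mathrm{K}_1$ produces $\mathrm{C}(\xib^{(2k-1)}\sqcup\etab^{(2k)};\etab^{(2k-1)}\sqcup\xib^{(2k)})$; super-block $\{2k,2k+1\}$ of $\mathrm{K}_\h$ produces $\mathrm{C}(\xib^{(2k)}\sqcup\etab^{(2k+1)};\etab^{(2k)}\sqcup\xib^{(2k+1)})$; the pair-$1$ block of $\mathrm{K}_\h$ produces $\det[\chi_\h(\eta_i^{(1)},\xi_j^{(1)})]$; and the terminal lone pair $m$ (supplied by $\mathrm{K}_1$ or $\mathrm{K}_\h$ depending on the parity of $m$) yields $\mathrm{C}(\xib^{(m)};\etab^{(m)})$. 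The extracted $\widehat{\mathrm{f}}$-factors assemble into $\prod_\ell\prod_{i_\ell}\mathrm{f}_\ell(\xi_{i_\ell}^{(\ell)})/\mathrm{f}_\ell(\eta_{i_\ell}^{(\ell)})$, and via \eqref{eq:Q} the $Q_1,Q_2$ factors combine into $\prod_{\ell=1}^{m-1}(1-z_\ell)^{n_\ell}(1-z_\ell^{-1})^{n_{\ell+1}}$.

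The main technical obstacle is the sign bookkeeping. Because $\mathrm{K}_1$ has rows in $\mathrm{S}_1$ while $\mathrm{K}_\h$ has rows in $\mathrm{S}_2$, the denominator $\zeta-\zeta'$ is oriented oppositely in the two kernels, so each super-block of $\mathrm{K}_\h$ initially yields a matrix with entries $1/(\zeta'-\zeta)$ rather than $1/(\zeta-\zeta')$; this produces a factor $(-1)^{n_{2k}+n_{2k+1}}$ per super-block of $\mathrm{K}_\h$, together with an additional $(-1)^{n_m}$ when $m$ is even and the lone pair belongs to $\mathrm{K}_\h$. These sign contributions, together with the Fredholm sign $(-1)^N$, must conspire to cancel cleanly and reproduce the stated formula.
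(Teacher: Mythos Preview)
Your proposal is correct and follows precisely the approach the paper defers to: the paper gives no self-contained proof of this proposition but simply cites \cite[Proposition 2.9]{Liu2022} (and \cite[Lemma 4.8]{baik2019multipoint}, \cite[Lemma 5.6]{BaikLiu21general}), all of which carry out exactly the Fredholm expansion plus Andr\'eief plus block-diagonalization argument you outline. Your identification of the super-block structure ($\{2k-1,2k\}$ for $\mathrm{K}_1$, $\{1\}$ and $\{2k,2k+1\}$ for $\mathrm{K}_\h$), the extraction of the $\widehat{\mathrm{f}}_\ell$ and $Q_1,Q_2$ weights, and the combinatorial reduction $\binom{N}{n_1,\ldots,n_m}^2/(N!)^2=1/\prod(n_\ell!)^2$ are all exactly right, and the sign bookkeeping you flag as the main obstacle is indeed the only delicate point---it is handled in the cited lemmas by tracking the parity contributions from the $(-1)^N$ Fredholm sign against the orientation flips in the $\mathrm{K}_\h$ blocks, and they do cancel.
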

     We remark that \eqref{eq: Dn_lim} looks slightly different from the version in \cite[(2.27)]{Liu2022} because we use the function $\mathrm{C}$ instead of the Vandermonde type products $\Delta$. The formula \eqref{eq: Dn_lim} also appears in \cite[Proposition 3.1]{LiuZhang25} when $\chi_{\h}(\eta,\xi)=1/(\eta-\xi)$ for the narrow wedge initial condition. The equivalence of Proposition \ref{prop: series} and Theorem \ref{thm: kpz_multitime} follows from \cite[Proposition 2.9]{Liu2022}, see also \cite[Lemma 4.8]{baik2019multipoint} and \cite[Lemma 5.6]{BaikLiu21general}.

    \section{Multipoint distribution formulas for TASEP}\label{sec: tasep formula}

Our formulas for the KPZ fixed point are obtained by taking $1:2:3$ scaling limit of the analogous formulas for the totally asymmetric simple exclusion process, which we discuss in this section. The totally asymmetric simple exclusion process (TASEP) on $\mathbb{Z}$ is a continuous-time Markov chain $X_t=(\mathrm{x}_i(t))_{i\geq 1}$, consisting of particles on $\mathbb{Z}$ performing independent Poisson random walks subject to the exclusion rule. Each particle tries to jump to its right neighbor after an independent exponential waiting time with rate $1$ but the jump is forbidden if the target site is occupied. The exponential clock is reset after each jump attempt. We will assume there is a right-most particle with index $1$ and label the particles from right to left, so the $i$-th particle at time $t$ has location $\mathrm{x}_i(t)$ and
    \begin{equation*}
        \cdots<\mathrm{x}_3(t)<\mathrm{x}_2(t)<\mathrm{x}_1(t).
    \end{equation*}
    The initial configuration is denoted by $Y=(y_i)_{i\geq 1}:= (\mathrm{x}_i(0))_{i\geq 1}$. Our key observation, following the work \cite{Liu2022}, is that the initial condition $Y$ can be encoded in a two-variable function $\Kess_Y(v,u)$, defined as an expectation involving random walk hitting problems. We begin by introducing this key object.

    \subsection{Characteristic function of the initial condition}\label{sec: tasep char}

In this subsection, we discuss how to characterize the initial condition in the TASEP formulas, and provide a probabilistic representation of the characterization.

One feature of the TASEP is that the distribution of any finite set of the rightmost particles, up to a fixed label $N$, is independent of the state of particles to their left. Conversely, a TASEP model with $N$ particles can be embedded into  a TASEP model with infinitely many particles, where the $N$ rightmost particles correspond to the $N$-particle system, and the states of all other particles are arbitrary. This feature will be used when we characterize the initial condition of TASEP with finitely many particles.

    Consider the following two simply connected regions of $\mathbb{C}$:
\begin{equation}\label{eq: regions}
    \Omega_\LL:=\left\{w\in\mathbb{C}: |w+1|<\frac{1}{2}\right\},\quad \Omega_\RR:=\left\{w\in\mathbb{C}: |w|<\frac{1}{2}\right\}.
\end{equation}

 The following characterization of the initial condition comes from \cite{Liu2022}, see Proposition 2.13 and the subsequent discussion in that paper for further details. It claimed that the initial condition is encoded in any function satisfying two conditions in the multipoint distribution formula of TASEP. We summarize these two conditions and introduce the concept of characteristic function below.
    \begin{defn}\label{def:KY_ess}
        Let $Y=(y_1>y_2>\cdots>y_N)$ where $N$ is a fixed integer.  We say $\Kess_Y$ is a characteristic function of $Y$, if it satisfies the following two conditions:
        \begin{enumerate}
            \item $\Kess_Y: \Omega_{\RR}\times \left(\Omega_{\LL}\backslash\{-1\}\right)\to \mathbb{C}$ is analytic.
            \item For any $1\leq i\leq N$, one has 
            \begin{equation}\label{eq:ess_residue}
                \oint_0 v^{-i}(v+1)^{y_i+i}\cdot \Kess_Y(v,u)\ddbarr{v} = -u^{-i}(u+1)^{y_i+i}.
            \end{equation}
        \end{enumerate}
    \end{defn}
    \begin{rmk}
        As pointed out in Proposition 2.13 of \cite{Liu2022} and the comments thereafter, for any given $Y$, there are infinitely many characteristic functions. However, the law of the TASEP model such as the multipoint distributions we are interested in here does not depend on the choice of $\Kess_Y$. This non-uniqueness comes from the nature of the TASEP model, as we discussed at the beginning of this subsection.
    \end{rmk}
    \begin{rmk}
    One could formally extend the concept of the characteristic ``function'' $\Kess_Y(v,u)$ to an infinite system with particles labeled on $\intZ_+$ and $Y=(\ldots,y_3,y_2,y_1)$ by defining $\Kess_Y(v,u)$ to satisfy \eqref{eq:ess_residue} for all $i\in\intZ_+$. We could extend it even further to a TASEP with particle labeled on $\intZ$, while the first condition is absorbed into the second condition by allowing $i\in\intZ$ in \eqref{eq:ess_residue}. The issue for these extensions is that $\Kess_Y$ is not necessarily well-defined as a function because of the convergence issue.
    \end{rmk}
    
    In \cite{Liu2022}, the author derived a characteristic function expressed in terms of symmetric functions for any $Y$, which is well-suited for asymptotic analysis under the step or flat initial condition. As a result, the author obtained the multipoint distribution of the KPZ fixed point for both the narrow-wedge and flat initial conditions. However, the characteristic function presented in \cite{Liu2022} is not suitable for asymptotic analysis with general initial conditions. One main contribution of this paper is the following characteristic function $\Kess_Y$ defined by an expectation involving random walk hitting problems, which turns out to be suitable for asymptotic analysis.
    %One main contribution of this paper is the following explicit expression of $\Kess_Y$ using an expectation involving random walk hitting problems.
    The idea is heavily inspired by the seminal work \cite{matetski2021kpz}: we compared the two formulas of \cite{matetski2021kpz} and \cite{Liu2022} and guessed the identity. %but the identity does not follow directly from their results. 
    We remark that a similar hitting expectation kernel expression has also been derived in \cite{BLSZ23}, which generalized the approach of \cite{matetski2021kpz}. Another interesting connection is that the left-hand side of the equation \eqref{eq:identity} in our proof has a similar structure with an expression of the $G(z_1,z_2)$ function in \cite[Proposition 4.6]{BLSZ23}, although they are still different\footnote{The left-hand side of \eqref{eq:identity} has two factors  $2^z$ and $2^{-G_\tau}$ which the $G(z_1,z_2)$ function in \cite{BLSZ23} does not have. This results in the different meanings of these two quantities: \eqref{eq:identity} is a binomial coefficient, while $G(z_1,z_2)$ in \cite{BLSZ23} is a probability. }.
    \begin{thm}\label{thm:essential_hitting}
        Let $(G_k)_{k\geq 0}$ be a geometric random walk with transition probability given by 
\begin{equation}\label{eq: transition_geometric}
	\mathbb{P}(G_{k+1}=x \mid G_k=y):= \frac{1}{2^{y-x}}\mathbf{1}_{x<y},
\end{equation}
and $\tau$ be the hitting time of $G$ to the strict epigraph of $Y$, namely
\begin{equation}\label{eq: stopping_geometric}
	\tau:= \min\{m\geq 0: G_m>y_{m+1}\}.
\end{equation}
Then the following function $\Kess_Y$ is a characteristic function of $Y$
%a valid expression for $\Kess_Y$ is given by 
\begin{equation}\label{eq:ess_hitting}
    \Kess_Y(v,u):= \sum_{z\in \mathbb{Z}}(2u+2)^z\cdot \mathbb{E}_{G_0=z}\left[\frac{2}{(2v+2)^{G_\tau+1}}\cdot \left(\frac{-v}{v+1}\right)^\tau\mathbf{1}_{\tau<N}\right].
\end{equation}
    \end{thm}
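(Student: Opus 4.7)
The plan is to reduce the residue identity \eqref{eq:ess_residue} to a pointwise identity for the geometric random walk, prove that identity by induction, and then sum it up. First I substitute the definition \eqref{eq:ess_hitting} of $\Kess_Y(v,u)$ into the LHS of \eqref{eq:ess_residue} and interchange the sum over $z$, the expectation, and the $v$-contour integral (legal once absolute convergence is established, discussed below). The $v$-integrand becomes $(-1)^\tau 2^{-G_\tau}v^{\tau-i}(v+1)^{y_i+i-\tau-G_\tau-1}$, and the small contour around $0$ picks out the coefficient of $v^{i-\tau-1}$ in the binomial expansion of $(v+1)^{y_i+i-\tau-G_\tau-1}$, namely $\binom{y_i+i-\tau-G_\tau-1}{i-\tau-1}$, which vanishes for $\tau\geq i$. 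Writing $(2u+2)^z=2^z(u+1)^z$, the identity \eqref{eq:ess_residue} reduces to
\begin{equation*}
\sum_{z\in \intZ}(u+1)^z\,\Psi_i(z)=-u^{-i}(u+1)^{y_i+i},\qquad \Psi_i(z):=\mathbb{E}_{G_0=z}\!\left[\mathbf{1}_{\tau\leq i-1}(-1)^\tau 2^{z-G_\tau}\binom{y_i+i-\tau-G_\tau-1}{i-\tau-1}\right].
\end{equation*}

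The heart of the argument is the pointwise identity
\begin{equation*}
\Psi_i(z)=(-1)^{i+1}\binom{z-y_i-1}{i-1}\mathbf{1}_{z\geq y_i+i},
\end{equation*}
which I prove by induction on $i\in\{1,\ldots,N\}$. The base case $i=1$ is immediate since $\tau\leq 0$ forces $\tau=0$ and $G_0=z>y_1$, making both sides equal to $\mathbf{1}_{z>y_1}$. For $i\geq 2$, split on the range of $z$. When $z>y_1$, necessarily $\tau=0$ and $G_\tau=z$, and upper negation $\binom{y_i+i-z-1}{i-1}=(-1)^{i-1}\binom{z-y_i-1}{i-1}$ closes this case (the strict decrease of $Y$ gives $y_1\geq y_i+i-1$, so $z>y_1$ implies $z\geq y_i+i$). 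When $z\leq y_1$, conditioning on $G_1=w$ via the Markov property rewrites the expectation in terms of the shifted target $Y'=(y_2,\ldots,y_N)$ at index $i-1$ (with $y'_{i-1}=y_i$), and the sign $(-1)^\tau=-(-1)^{\tau-1}$ produces
\begin{equation*}
\Psi_i(z)=-\sum_{w<z}\Psi'_{i-1}(w).
\end{equation*}
Plugging in the inductive hypothesis and applying the hockey-stick identity $\sum_{w=y_i+i-1}^{z-1}\binom{w-y_i-1}{i-2}=\binom{z-y_i-1}{i-1}$ yields the target (the support condition $z\geq y_i+i$ emerges naturally from the sum being empty otherwise). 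Finally, summing the pointwise identity against $(u+1)^z$ and using the standard generating function $\sum_{k\geq 0}\binom{k+i-1}{i-1}x^k=(1-x)^{-i}$ produces $(-1)^i u^{-i}(u+1)^{y_i+i}$, so the outer sign $(-1)^{i+1}$ gives exactly $-u^{-i}(u+1)^{y_i+i}$.

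For analyticity and the justification of the interchange above, the decisive observation is that the expectation in \eqref{eq:ess_hitting} vanishes for $z\leq y_N$: strict decrease of $Y$ forces $G_m<z\leq y_N\leq y_{m+1}$ for every $m\leq N-1$, so $\tau\geq N$ and the indicator $\mathbf{1}_{\tau<N}$ kills the term. The middle range $y_N<z\leq y_1$ contributes only finitely many bounded summands. For the tail $z>y_1$ one has $\tau=0$ and $G_\tau=z$, so the summand collapses to $\frac{2}{2v+2}\bigl(\frac{2u+2}{2v+2}\bigr)^z$; on $\Omega_{\RR}\times\Omega_{\LL}$ the inequalities $|2u+2|<1<|2v+2|$ give uniform geometric decay on compact subsets of $\Omega_{\RR}\times(\Omega_{\LL}\setminus\{-1\})$, delivering both absolute convergence of the defining series and analyticity of $\Kess_Y$.

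The main obstacle is the inductive step of the pointwise identity — correctly tracking the sign $(-1)^\tau$ through the Markov decomposition, the joint shift $(i,Y)\mapsto (i-1,Y')$, and the index bookkeeping $y'_{i-1}=y_i$, while matching the resulting finite sum to the target binomial via the hockey-stick identity and simultaneously verifying that the support condition $z\geq y_i+i$ is inherited consistently across all sub-cases (including the upper-negation regime $z>y_1$ that is handled outside the recursion).
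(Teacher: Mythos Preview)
Your proof is correct and follows essentially the same route as the paper: both reduce \eqref{eq:ess_residue} to the pointwise binomial identity for the hitting expectation, prove it by induction on $i$ via the one-step Markov decomposition and the hockey-stick sum, and handle analyticity by the same three-range splitting of $z$. The only cosmetic difference is that the paper applies upper negation before the induction (writing the binomial as $-\binom{G_\tau-y_i-1}{i-\tau-1}$ so the $(-1)^\tau$ disappears), whereas you carry $(-1)^\tau$ through and apply upper negation only in the $z>y_1$ sub-case.
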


    We remark that for the narrow wedge initial condition $y_i=-i$, $1\le i\leq N$, the characteristic function defined above is equal to $\Kess_{Y}(v,u) = \sum_{z\ge 0} (2u+2)^z \cdot \frac{2}{(2v+2)^{z+1}} =\frac{1}{v-u}$, which matches the corresponding characteristic function in \cite{Liu2022} for the step initial condition.
\begin{proof}
    First we check the analyticity of $\Kess_Y(v,u)$ in $\Omega_\RR\times (\Omega_\LL\backslash\{-1\})$. Recall that $v\in \Omega_\RR=:\{|w|<1/2\}$ and $u\in \Omega_\LL=:\{|w+1|<1/2\}$. Note that the summand on the right-hand side of \eqref{eq:ess_hitting} is $0$ when $z\le y_{N}$, since the random walk will stay below the epigraph of $Y$ up to time $N$. Moreover, $\tau=0$ and $G_\tau=z$ when $z>y_1$. Therefore, we have
    \begin{equation}
    \label{eq:ess_kernel_rewriting}
    \begin{split}
        \Kess_Y(v,u)&=\sum_{z\ge y_1+1}^\infty (2u+2)^z \frac{2}{(2v+2)^{z+1}} +\sum_{z=y_N+1}^{y_1} (2u+2)^z\cdot \mathbb{E}_{G_0=z}\left[\frac{2}{(2v+2)^{G_\tau+1}}\cdot \left(\frac{-v}{v+1}\right)^\tau\mathbf{1}_{\tau<N}\right]\\
        &=\left(\frac{u+1}{v+1}\right)^{y_1+1}\cdot \frac{1}{v-u}+\sum_{z=y_N+1}^{y_1} (2u+2)^z\cdot \mathbb{E}_{G_0=z}\left[\frac{2}{(2v+2)^{G_\tau+1}}\cdot \left(\frac{-v}{v+1}\right)^\tau\mathbf{1}_{\tau<N}\right],
        \end{split}
     \end{equation}
    where we used the fact that $|u+1|<|v+1|$ to simplify the first summation. Note that the right-hand side of \eqref{eq:ess_kernel_rewriting} is a sum of finitely many terms each of which is analytic in $\Omega_\RR\times (\Omega_\LL\backslash\{-1\})$. This proves the analyticity of $\Kess_Y(v,u)$ in this domain.    
    
    Next, we verify that the right-hand side of \eqref{eq:ess_hitting} satisfies \eqref{eq:ess_residue} for all $1\leq i\leq N$. A Taylor expansion of $u^{-i}$ at $-1$ gives
\begin{equation*}
-u^{-i}(u+1)^{y_i+i}
=(-1)^{i+1}\sum_{j=0}^\infty {{i+j-1\choose j}}(u+1)^{y_i+i+j},
\end{equation*}
where the series converges absolutely for $u\in \Omega_\LL$. From \eqref{eq:ess_kernel_rewriting} we have seen that the right-hand side of \eqref{eq:ess_hitting} converges absolutely as a Laurent series in $u$ for $u\in \Omega_{\mathrm{L}}\backslash \{-1\}$. Hence, it is sufficient to show that
\begin{equation*}
\oint_{0}\frac{\d v}{2\pi\ii}v^{-i} (v+1)^{y_i+i}\cdot\frac{2^z}{v+1}\cdot \mathbb{E}_{G_0=z}\left[\frac{1}{(2v+2)^{G_\tau}}\left(\frac{-v}{v+1}\right)^{\tau}\mathbf{1}_{\tau<N}\right]=(-1)^{i+1}\1_{z\ge y_i+i} {z-y_i-1\choose i-1}.
\end{equation*}
By interchanging the contour integration and the expectation (which is justified by \eqref{eq:ess_kernel_rewriting}), the above is equivalent to
\begin{equation}
\label{eq:identity}
2^z\cdot \mathbb{E}_{G_0=z}\left[\oint_{0}\frac{\d v}{2\pi\ii}\frac{(v+1)^{y_i+i-1-G_\tau-\tau}}{(-v)^{i-\tau}} \frac{1}{2^{G_\tau}}\mathbf{1}_{\tau<N}\right]=-\1_{z\ge y_i+i} {z-y_i-1\choose i-1},
\end{equation}
which is, by using the assumption that $i\leq N$ and the fact that the $v$-integral vanishes when $i\ge \tau$,
\begin{equation}
\label{eq:identity_to_be_proved}
2^z\cdot \mathbb{E}_{G_0=z}\left[- {G_\tau -y_i-1\choose i-\tau-1}\frac{1}{2^{G_\tau}}\mathbf{1}_{\tau<i} \right]=-\1_{z\ge y_i+i} {z-y_i-1\choose i-1}.
\end{equation}
Below we use induction to prove~\eqref{eq:identity_to_be_proved} for any $1\le i\le N$.

When $i=1$, the expectation on the left-hand side of \eqref{eq:identity_to_be_proved} is nonzero if and only if $\tau=0$, which is equivalent to $z\ge y_1+1$. Moreover, when $\tau=0$, $G_\tau =G_0=z$. Thus \eqref{eq:identity_to_be_proved} holds.

Assuming the identity is true for $i-1$, we want to show it holds for $i$.

Note that when $z\ge y_1+1$, we have $\tau=0$ and both sides are equal.

When $z\le y_1$, we define $\hat G_k=G_{k+1}$ and $\hat y_k=y_{k+1}$ for $k=0,1,\ldots$. Then we have, by induction,
\begin{equation*}
2^{\hat z}\cdot \mathbb{E}_{\hat G_0=\hat z}\left[- {\hat G_{\hat \tau} -\hat y_{i-1}-1\choose i-\hat \tau-2}\frac{1}{2^{\hat G_{\hat \tau}}}\mathbf{1}_{\hat \tau<i-1} \right]=-\1_{\hat z\ge \hat y_{i-1}+{i-1}} {\hat z-\hat y_{i-1}-1\choose i-2},
\end{equation*}
here $\hat\tau:=\tau-1$. Thus, we have
\begin{equation}\label{eq: induction_hitting}
\sum_{\hat z=-\infty}^{z-1}\frac{1}{2^{z-\hat z}}\cdot \mathbb{E}_{\hat G_0=\hat z}\left[- {\hat G_{\hat \tau} -\hat y_{i-1}-1\choose i-\hat \tau-2}\frac{1}{2^{\hat G_{\hat \tau}}}\mathbf{1}_{\hat \tau<i-1} \right]=-\sum_{\hat z=-\infty}^{z-1}\frac{1}{2^{z}}\1_{\hat z\ge \hat y_{i-1}+{i-1}} {\hat z-\hat y_{i-1}-1\choose i-2}.
\end{equation}
By using the Markov property for the left-hand side of \eqref{eq: induction_hitting}, we have
\begin{equation*}
\mathbb{E}_{G_0= z}\left[- { G_{ \tau} - y_{i}-1\choose i- \tau-1}\frac{1}{2^{G_{ \tau}}}\mathbf{1}_{ \tau<i} \right]
=-\frac{\mathbf{1}_{z\geq y_i+i}}{2^z}\sum_{\hat z=y_{i}+i-1}^{z-1} {\hat z- y_{i}-1\choose i-2}=-\frac{\mathbf{1}_{z\geq y_i+i}}{2^z}{z-y_i-1\choose i-1},
\end{equation*}
where we used the identity $\sum_{m=k}^n{m-1\choose k-1}={n\choose k}$ in the last step. This finishes the induction and the proof.

\end{proof}
    \subsection{Multipoint distribution of TASEP with general initial configurations}
    As we discussed in the previous subsection, the multipoint distribution of TASEP only depends on a finite number of rightmost particles. Hence, in this subsection, we consider the TASEP model with finitely many particles. 
    
    The following theorem is essentially \cite[Theorem 2.1]{Liu2022}, where the integrand $\mathcal{D}_Y(z_1,\ldots,z_{m-1})$ is defined by a Fredholm determinant formula or a series expansion formula and the initial condition information is encoded in a characteristic function of $Y$. It was proved that the choice of characteristic functions does not affect the value of the $\mathcal{D}_Y$ function. We will present the formula with a new characteristic function $\Kess_Y$ as in \eqref{eq:ess_hitting} in Theorem \ref{thm:essential_hitting} that is suitable for asymptotic analysis.
    %but where the characteristic function $\Kess_Y$ was defined implicitly through Definition \ref{def:KY_ess}. With Theorem \ref{thm:essential_hitting}, the formula now becomes completely explicit and suitable for asymptotic analysis. 
    \begin{thm}\label{thm:TASEP_general} Given $Y=(y_N,\ldots,y_2,y_1)\in \mathbb{Z}^{N}$ satisfying $y_N<\cdots<y_1$, where $N$ is a positive integer. Consider TASEP with initial particle locations $X_0=Y$. Let $(k_1,t_1),\ldots,(k_m,t_m)$ be $m$ distinct points in $\{1,\ldots,N\}\times \mathbb{R}_+$ satisfying $t_1\le t_2\le\cdots\le t_m$. Then for any integers $a_1,\ldots,a_m$,
    \begin{equation}
        \mathbb{P}_Y\left(\bigcap_{\ell=1}^m\{\mathrm{x}_{k_\ell}(t_\ell)\geq a_\ell\}\right) = \oint_0 \frac{\diff z_1}{2\pi\ii z_1(1-z_1)}\cdots \oint_0 \frac{\diff z_{m-1}}{2\pi\ii z_{m-1}(1-z_{m-1})} \mathcal{D}_Y(z_1,\ldots,z_{m-1}),
    \end{equation}
    where $\mathbb{P}_Y$ denotes the probability given $X(0)=Y$. %The contours of integration are counterclockwise circles centered at the origin with radii less than 1. 
    The function $\mathcal{D}_Y(z_1,\ldots,z_{m-1})$ is defined as a Fredholm determinant in Definition \ref{def:operators_K1Y}, or equivalently as a series in Definition \ref{def:D_Y_series}. 
    \end{thm}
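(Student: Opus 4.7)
The plan is to reduce Theorem \ref{thm:TASEP_general} to \cite[Theorem 2.1]{Liu2022} combined with Theorem \ref{thm:essential_hitting}. Indeed, \cite[Theorem 2.1]{Liu2022} already establishes the identical contour-integral formula for $\mathbb{P}_Y\left(\bigcap_{\ell=1}^m\{\mathrm{x}_{k_\ell}(t_\ell)\geq a_\ell\}\right)$, with an integrand $\mathcal{D}_Y(z_1,\ldots,z_{m-1})$ defined by the same Fredholm determinant (or series) structure, where the dependence on the initial data enters only through a function playing the role of $\Kess_Y$ and satisfying the two conditions of Definition \ref{def:KY_ess}. As noted in the remark after Definition \ref{def:KY_ess}, Proposition 2.13 of \cite{Liu2022} shows that the value of $\mathcal{D}_Y$ (and hence of the multipoint distribution on the left-hand side) is invariant under the choice of characteristic function. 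Therefore the only new input required is to verify that the specific function in \eqref{eq:ess_hitting} is itself a characteristic function of $Y$, and this is exactly Theorem \ref{thm:essential_hitting}.

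Concretely, the steps I would carry out are as follows. First, I would quote \cite[Theorem 2.1]{Liu2022} to obtain the formula with \emph{some} characteristic function $\Kess_Y^\circ$ of $Y$ (for instance the symmetric-function expression used there). Second, I would invoke Theorem \ref{thm:essential_hitting}, which asserts that the geometric-random-walk hitting expectation in \eqref{eq:ess_hitting} defines a function that is analytic on $\Omega_\RR\times (\Omega_\LL\setminus\{-1\})$ and satisfies the residue identity \eqref{eq:ess_residue} for every $1\le i\le N$; hence it too is a characteristic function of $Y$ in the sense of Definition \ref{def:KY_ess}. Third, I would appeal to the invariance property of \cite[Proposition 2.13]{Liu2022} to conclude that replacing $\Kess_Y^\circ$ by the new $\Kess_Y$ inside the Fredholm determinant (Definition \ref{def:operators_K1Y}) or equivalently inside the series expansion does not change the value of $\mathcal{D}_Y(z_1,\ldots,z_{m-1})$, so that the formula continues to hold for our chosen $\Kess_Y$.

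The only non-trivial step in this reduction is the verification carried out in Theorem \ref{thm:essential_hitting}, and the main obstacle there — the identity \eqref{eq:identity_to_be_proved} relating a binomial coefficient to the conditional expectation of a shifted binomial after the geometric hitting time — has already been handled by the Markov-property-plus-induction argument given in the proof of Theorem \ref{thm:essential_hitting}. Consequently, the proof of Theorem \ref{thm:TASEP_general} itself contains no additional combinatorial or analytic work beyond recording these three invocations, and the statement follows immediately.
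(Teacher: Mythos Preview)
Your proposal is correct and matches the paper's own treatment essentially verbatim: the paper states that Theorem \ref{thm:TASEP_general} is ``essentially \cite[Theorem 2.1]{Liu2022}'', invokes the invariance of $\mathcal{D}_Y$ under the choice of characteristic function (\cite[Proposition 2.13]{Liu2022}), and then simply presents the formula with the new $\Kess_Y$ from Theorem \ref{thm:essential_hitting}. No additional argument is given or needed, so your three-step reduction is exactly the intended proof.
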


\subsubsection{Fredholm determinant representation of $\mathcal{D}_Y(z_1,\ldots,z_{m-1})$}
\label{sec:Fredholm_representation}

The definition of $\mathcal{D}_Y(z_1,\ldots,z_{m-1})$ is very similar to its limiting counterpart $\DD_\h(z_1,\ldots,z_{m-1})$ defined in Section \ref{sec: def_Dlim} and \ref{sec: equiv_def}, either as a Fredholm determinant $\det(\mathrm{I}-\mathcal{K}_1\mathcal{K}_Y)$ or as a Fredholm series expansion. We will only use the series expansion formula in this paper, but we present both formulas here for completeness and possible later uses. 

\paragraph{Spaces of the operators}
\label{sec:spaces_operators}

We will define the operators on two specific spaces of nested contours with complex measures depending on $\boldsymbol{z}=(z_1,\ldots,z_{m-1})$, where $z_\ell\ne 1$ for each $1\le \ell\le m-1$. Recall the definition of the two regions $\Omega_\LL$ and $\Omega_\RR$ from \eqref{eq: regions}.

Suppose $\Sigma_{m,\LL}^{\out},\ldots,\Sigma_{2,\LL}^{\out}$, $\Sigma_{1,\LL}$, $\Sigma_{2,\LL}^{\inn},\ldots,\Sigma_{m,\LL}^\inn$ are $2m-1$ nested simple closed contours, from outside to inside, in $\Omega_\LL$ enclosing the point $-1$. Similarly,
$\Sigma_{m,\RR}^{\out},\ldots,\Sigma_{2,\RR}^{\out}$, $\Sigma_{1,\RR}$, $\Sigma_{2,\RR}^{\inn},\ldots,\Sigma_{m,\RR}^\inn$ are $2m-1$ nested simple closed contours, from outside to inside, in $\Omega_\RR$ enclosing the point $0$.  See Figure~\ref{fig:contours_finite_time} for an illustration of the contours. These contours are all counterclockwise oriented. 
	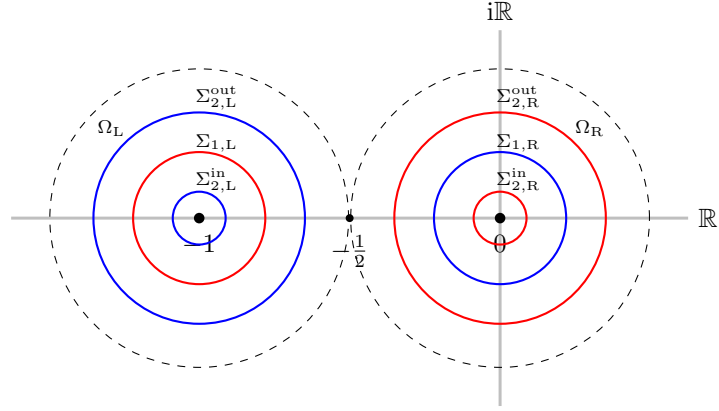
\begin{figure}[t]
 	\centering
 	\begin{tikzpicture}[scale=1]
 	\draw [line width=0.4mm,lightgray] (-2,0)--(7,0) node [pos=1,right,black] {$\realR$};
 	\draw [line width=0.4mm,lightgray] (4.5,-2.5)--(4.5,2.5) node [pos=1,above,black] {$\mathrm{i}\realR$};
 	\fill (2.5,0) circle[radius=1.5pt] node [below,shift={(0pt,-3pt)}] {$-\frac{1}{2}$};
    \fill (4.5,0) circle[radius=2pt] node [below,shift={(0pt,-3pt)}] {$0$};
    \fill (0.5,0) circle[radius=2pt] node [below,shift={(0pt,-3pt)}] {$-1$};
        \draw[dashed] (4.5,0) circle (56.5pt);
        \draw[dashed] (0.5,0) circle (56.5pt);
        \draw[red, thick] (4.5,0) circle (40pt);
        \draw[blue, thick] (4.5,0) circle (25pt);
        \draw[red, thick] (4.5,0) circle (10pt);
        \draw[blue, thick] (0.5,0) circle (40pt);
        \draw[red, thick] (0.5,0) circle (25pt);
        \draw[blue, thick] (0.5,0) circle (10pt);
                \node[text width=0.1cm,font=\bfseries] at (-0.8,1.2) {\scriptsize $\Omega_\LL$};
                 \node[text width=0.1cm,font=\bfseries] at (5.55,1.2) {\scriptsize $\Omega_\RR$};
			\node[text width=0.1cm,font=\bfseries] at (4.5,1.6) {\scriptsize $\Sigma_{2,\RR}^\mathrm{out}$};
                \node[text width=0.1cm,font=\bfseries] at (4.5,1) {\scriptsize $\Sigma_{1,\RR}$};
                \node[text width=0.1cm,font=\bfseries] at (4.5,0.5) {\scriptsize $\Sigma_{2,\RR}^\mathrm{in}$};
			\node[text width=0.1cm,font=\bfseries] at (0.5,1.6) {\scriptsize $\Sigma_{2,\LL}^\mathrm{out}$};
            \node[text width=0.1cm,font=\bfseries] at (0.5,1) {\scriptsize $\Sigma_{1,\LL}$};
                \node[text width=0.1cm,font=\bfseries] at (0.5,0.5) {\scriptsize $\Sigma_{2,\LL}^\mathrm{in}$};
		\end{tikzpicture}

 	\caption{Illustration of the contours for $m=2$: The regions $\Omega_\LL$ and $\Omega_\RR$ are the interior of the two dashed circles, from left to right; the three contours around $-1$ from outside to inside are $\Sigma_{2,\LL}^{\out},\Sigma_{1,\LL},\Sigma_{2,\LL}^{\inn}$ respectively; the three contours around $0$ from outside to inside are $\Sigma_{2,\RR}^{\out},\Sigma_{1,\RR},\Sigma_{2,\RR}^{\inn}$ respectively. $\mathcal{S}_1$ is the union of the red contours, and $\mathcal{S}_2$ is the union of the blue contours.}\label{fig:contours_finite_time}
\end{figure}

We define
\begin{equation}
	\label{eq:Sigma_contours}
\Sigma_{\ell,\LL}:=\Sigma_{\ell,\LL}^\out\cup \Sigma_{\ell,\LL}^\inn, \qquad \Sigma_{\ell,\RR}:=\Sigma_{\ell,\RR}^\out\cup \Sigma_{\ell,\RR}^\inn,\qquad \ell=2,\ldots,m,
\end{equation}
and
\begin{equation*}
\mathcal{S}_1:= \Sigma_{1,\LL} \cup \Sigma_{2,\RR} \cup \cdots \cup \begin{dcases}
\Sigma_{m,\LL}, & \text{ if $m$ is odd},\\
\Sigma_{m,\RR}, & \text{ if $m$ is even},
\end{dcases}
\end{equation*}
and 
\begin{equation*}
\mathcal{S}_2:= \Sigma_{1,\RR} \cup \Sigma_{2,\LL} \cup \cdots \cup \begin{dcases}
\Sigma_{m,\RR}, & \text{ if $m$ is odd},\\
\Sigma_{m,\LL}, & \text{ if $m$ is even}.
\end{dcases}
\end{equation*}

We introduce a measure on these contours in the same way as in~\eqref{eq:def_dmu}. Let
\begin{equation*}
\dd \mu(w) = \dd \mu_{\boldsymbol{z}} (w) :=
 \begin{dcases}
\frac{-z_{\ell-1}}{1-z_{\ell-1}} \ddbarr{w}, & w\in \Sigma_{\ell,\LL}^\out \cup \Sigma_{\ell,\RR}^\out, \quad \ell=2,\ldots,m,\\
\frac{1}{1-z_{\ell-1}} \ddbarr{w}, & w\in \Sigma_{\ell,\LL}^\inn \cup \Sigma_{\ell,\RR}^\inn, \quad \ell=2,\ldots,m,\\
\ddbarr{w}, & w\in \Sigma_{1,\LL} \cup \Sigma_{1,\RR}.
\end{dcases}
\end{equation*}

\paragraph{Operators $\mathcal{K}_1$ and $\mathcal{K}_Y$}
\label{sec:def_operators}

Now we introduce the operators $\mathcal{K}_1$ and $\mathcal{K}_Y$ to define $\mathcal{D}_Y(z_1,\ldots,z_{m-1})$ in Theorem~\ref{thm:TASEP_general}. We assume that   $\boldsymbol{z}=(z_1,\ldots,z_{m-1})$ is the same as in Section~\ref{sec:spaces_operators}. Let
\begin{equation*}
Q_1(j) :=\begin{dcases}
1-z_{j}, & \text{ if $j$ is odd and $j<m$},\\
1-\frac{1}{z_{j-1}}, & \text{ if $j$ is even},\\
1,& \text{if $j=m$ is odd},
\end{dcases}
\qquad 
Q_2(j) :=\begin{dcases}
1-z_{j}, & \text{ if $j$ is even and $j<m$},\\
1-\frac{1}{z_{j-1}}, & \text{ if $j$ is odd and $j>1$},\\
1,& \text{if $j=m$ is even, or $j=1$}.
\end{dcases}
\end{equation*}

\begin{defn}
	\label{def:operators_K1Y}
	We define
	\begin{equation*}
	\mathcal{D}_Y(z_1,\ldots,z_{m-1})=\det\left( \mathrm{I} - \mathcal{K}_1  \mathcal{K}_Y \right),
	\end{equation*}
	where the two operators
	\begin{equation*}
	\mathcal{K}_1: L^2(\mathcal{S}_2,\dd\mu) \to L^2(\mathcal{S}_1,\dd\mu),\qquad \mathcal{K}_Y: L^2(\mathcal{S}_1,\dd\mu)\to L^2(\mathcal{S}_2,\dd\mu)
	\end{equation*}
	are defined by their kernels
	\begin{equation}
	\label{eq:K1}
	\mathcal{K}_1(w,w'):= \left(\delta_i(j) + \delta_i( j+ (-1)^i)\right) \frac{ \widehat{f}_i(w) }{w-w'} Q_1(j),
	\end{equation}
	and
	\begin{equation}
	\label{eq:KY}
	\mathcal{K}_Y(w',w):= \begin{dcases}
	\left(\delta_j (i) + \delta_j(i - (-1)^j)\right) \frac{ \widehat{f}_j(w') }{w'-w} Q_2(i), & i\ge 2,\\
	\delta_j(1)\widehat{f}_j(w')\Kess_Y(w';w), & i=1,
	\end{dcases}
	\end{equation}
	for any $w\in (\Sigma_{i,\LL}\cup \Sigma_{i,\RR}) \cap \mathcal{S}_1$ and $w'\in (\Sigma_{j,\LL}\cup \Sigma_{j,\RR}) \cap \mathcal{S}_2$ with $1\le i,j\le m$. Here $\Kess_Y$ is the characteristic function given by \eqref{eq:ess_hitting}. The function
	\begin{equation*}
	\label{eq:fi}
	\widehat{f}_i(w):=\begin{dcases}
	f_i(w), & w\in \Omega_\LL\setminus\{-1\},\\
	\frac{1}{ f_i(w)}, & w\in \Omega_\RR\setminus\{0\},
	\end{dcases}
	\end{equation*}
	with
	\begin{equation}\label{eq: f_i}
	f_i(w) := \begin{dcases}
	 w^{k_i-k_{i-1}} (w+1)^{-(a_i-a_{i-1})-(k_i-k_{i-1})} \mathrm{e}^{(t_i-t_{i-1}) w}, & i=2,\ldots,m,\\
	 w^{k_1} (w+1)^{-a_1-k_1} \mathrm{e}^{t_1 w}, & i=1,
	\end{dcases}
	\end{equation}
	for all $w\in(\Omega_\LL\setminus\{-1\})\cup (\Omega_\RR\setminus\{0\})$.
\end{defn}

\subsubsection{Series expansion formula for $\mathcal{D}_Y(z_1,\ldots,z_{m-1})$}
\label{sec:series_expansion}
We will be working with the following series expansion formulas, which is equivalent to the Fredholm determinant formula in the previous section, by \cite[Proposition 2.9]{Liu2022}. We use the same notation and conventions as in Section \ref{sec: equiv_def} for the Cauchy determinants. 

\begin{defn}[Alternative definition of $\mathcal{D}_Y$]
	\label{def:D_Y_series}
	We have an alternative definition of $\mathcal{D}_Y$ below
	\begin{equation}
	\label{eq:series_expansion}
	\mathcal{D}_Y(z_1,\ldots,z_{m-1}):=\sum_{\boldsymbol{n}\in(\intZ_{\ge 0})^m}\frac{1}{(\boldsymbol{n}!)^2}\mathcal{D}_{Y}^{(\mathbf{n})}(z_1,\ldots,z_{m-1}),
	\end{equation}
	with $\boldsymbol{n}!=n_1!\cdots n_m!$ for $\boldsymbol{n}=(n_1,\ldots,n_m)$. Here	
		\begin{equation}\label{eq:D_nY}
	\begin{split}
	&\mathcal{D}^{(\mathbf{n})}_{Y}(z_1,\ldots,z_{m-1})=\mathcal{D}^{(\mathbf{n})}_{Y}(z_1,\ldots,z_{m-1}; (x_1,t_1,a_1),\ldots,(x_m,t_m,a_m))\\
	&= \prod_{\ell=1}^{m-1} \left(1-z_\ell\right)^{n_\ell}\left(1-z_\ell^{-1}\right)^{n_{\ell+1}}\left(\prod_{\ell=1}^m\prod_{i_\ell=1}^{n_\ell} \int_{\Sigma_{\ell,\LL}}\mathrm{d}\mu_{\boldsymbol{z}}(u_{i_\ell}^{(\ell)})\int_{\Sigma_{\ell,\RR}}\mathrm{d}\mu_{\boldsymbol{z}}(v_{i_\ell}^{(\ell)})\right)\prod_{\ell=1}^m\prod_{i_\ell=1}^{n_\ell} \frac{f_\ell(u_{i_\ell}^{(\ell)})}{ f_\ell(v_{i_\ell}^{(\ell)})}\\
	&\quad\cdot  \det\begin{bmatrix}
	    \Kess_Y(v^{(1)}_i,u^{(1)}_j)
	\end{bmatrix}_{1\leq i,j\leq n_1}\cdot \prod_{\ell=1}^{m-1}\mathrm{C}\left(U^{(\ell)}\sqcup V^{(\ell+1)}; V^{(\ell)}\sqcup U^{(\ell+1)}\right)\cdot \mathrm{C}(U^{(m)};V^{(m)}),
	\end{split}
        \end{equation}
        with $U^{(\ell)}=(u_1^{(\ell)},\ldots, u_{n_\ell}^{(\ell)})$ and $V^{(\ell)}=(v_1^{(\ell)},\ldots, v_{n_\ell}^{(\ell)})$, and the functions $f_\ell$ defined in~\eqref{eq: f_i} for $1\leq \ell\leq m$. 
\end{defn}

    \section{Convergence of the TASEP formula}\label{sec: tasep convergence}
    In this section, we will take the proper scaling limit of the TASEP formulas (see Theorem \ref{thm:TASEP_general}), to get the corresponding KPZ fixed point formulas.  We start with the setup for the proper rescaling.
    \subsection{TASEP height function and $1:2:3$ rescaling}    
    The TASEP particle configuration can be encoded into the corresponding height function $H(x,t)$, defined as the unique function $\mathbb{R}\times \mathbb{R}_+\to \mathbb{R}$ satisfying the following conditions:
    \begin{enumerate}
        \item $H(0,0)=0$,
        \item  $H(x+1,t)=H(x,t)+\widehat{\eta}(x,t)$ for all $x\in \mathbb{Z}$, where 
        \begin{equation}\label{eq: occupation}
            \widehat{\eta}(x,t)=\begin{cases}
            1\quad &\text{if there is a particle at site $x$ at time $t$},\\
            -1\quad &\text{if there is no particle at site $x$ at time $t$},
        \end{cases}
        \end{equation}
         \item $H(\cdot,t)$ is piecewise linear with constant slopes between consecutive integers.
    \end{enumerate}
    The dynamics of the height function is as follows. Each local maximum of the height function turns into a local minimum after an independent exponential time with rate $1$. After each flip of max to min, the height at the flip decreases by $2$ while the height at the other integer points remain unchanged. The values at general $x\in \mathbb{R}$ are then determined by linear interpolations. Note that here we follow the convention in \cite{matetski2021kpz} where the height function decreases in time, instead of increasing as in some other literature. More explicitly, for the TASEP Markov chain $X_t=(\mathrm{x}_i(t))_{i\geq 1}$ we define its height function as 
    \begin{equation}
       H(x,t):= -2\left(X_t^{-1}(x-1)-X_{0}^{-1}(-1)\right)-x,\quad \text{for }x\in \mathbb{Z},
    \end{equation}
    where 
    \begin{equation*}
        X_t^{-1}(u):=\inf\{k\in \mathbb{Z}: \mathsf{x}_k(t)\leq u\}.
    \end{equation*}
    In particular, the initial height function $\mathsf{h}$ corresponding to the initial particle configuration $Y$ is
    \begin{equation*}
         \mathsf{h}(x):= H(x,0)= -2\left(Y^{-1}(x-1)-Y^{-1}(-1)\right)-x,\quad \text{for }x\in \mathbb{Z}.
    \end{equation*}
    We will use $Y(\hh)$ or $\hh(Y)$ to represent the initial particle configuration $Y$ corresponding to the initial height function $\hh$ and vice versa. Under this identification we can express the joint distribution of particle configurations using the height functions and vice versa, for example
    \begin{equation}
        \mathbb{P}_{Y}\left(\bigcap_{\ell=1}^m \{\mathrm{x}_{k_\ell}(t_\ell)\geq a_\ell\}\right) = \mathbb{P}_{\hh(Y)}\left(\bigcap_{\ell=1}^m \{H(a_\ell,t_\ell)\leq -a_\ell-2k_\ell\}\right).
    \end{equation}
    Now we introduce the proper rescaling for the TASEP height function so that it will converge to the KPZ fixed point. For $\varepsilon>0$, we define the rescaled TASEP height function $\mathcal{H}^{\varepsilon}(\XX,\TT)$ for $(\XX,\TT)\in \mathbb{R}\times \mathbb{R}_+$ as follows:
    \begin{equation}
        \mathcal{H}^{\varepsilon}(\XX,\TT):= \varepsilon^{\frac{1}{2}}\left(H(2\varepsilon^{-1}\XX, 2\varepsilon^{-\frac{3}{2}}\TT)+ \varepsilon^{-\frac{3}{2}}\TT\right).
    \end{equation}
 In particular $\mathcal{H}^{\varepsilon}(\XX,0)=\varepsilon^{\frac{1}{2}}\cdot H(2\varepsilon^{-1}\XX,0)=: \h^{\varepsilon}(\XX)$. It was shown in \cite[Theorem 3.13]{matetski2021kpz} that if $\h^{\varepsilon}\to \h$ in $\uc$ as $\varepsilon\to 0$, then for any positive integer $m$ one has $(\mathcal{H}^\varepsilon(\cdot,\TT_1;\h^\varepsilon),\ldots,\mathcal{H}^{\varepsilon}(\cdot,\TT_m;\h^\varepsilon))$ converges in distribution to $(\mathcal{H}(\cdot,\TT_1;\h),\ldots,\mathcal{H}(\cdot,\TT_m;\h))$ in the topology of $\uc^m$, where $\mathcal{H}(\cdot,\cdot;\h)$ is the KPZ fixed point starting from the initial condition $\h$. This in particular implies 
 \begin{equation}\label{eq: multitime_limit}
    \mathbb{P}\left(\bigcap_{\ell=1}^m\left\{\mathcal{H}(\XX_\ell,\TT_\ell;\h)\leq \HH_\ell\right\}\right) = \lim_{\varepsilon\to 0}\mathbb{P}_{Y(\h^\varepsilon)}\left(\bigcap_{\ell=1}^m\left\{\mathrm{x}_{\frac{1}{2}\varepsilon^{-\frac{3}{2}}\TT_\ell-\varepsilon^{-1}\XX_\ell-\frac{1}{2}\varepsilon^{-\frac{1}{2}}\HH_\ell}(2\varepsilon^{-\frac{3}{2}}\TT_\ell)\geq 2 \varepsilon^{-1}\XX_\ell\right\}\right).
 \end{equation}
We will use \eqref{eq: multitime_limit} and Theorem \ref{thm:TASEP_general} to prove Theorem \ref{thm: kpz_multitime}. Our strategy is to first assume that the initial condition $\h$ is a linear combination of finitely many narrow wedges and prove convergence of the TASEP approximations for such initial conditions. Then we use the density of such initial conditions to extend \eqref{eq: multi_time} to all $\h\in \ucc$. 

 \subsection{Finitely many narrow wedges and approximations}
    \begin{defn}[Multiple narrow wedges]
    Define the space of initial height functions consisting of finitely many narrow wedges: 
    \begin{equation}
            \mnw:= \{\h\in \uc: \h(\nXX):= \sum_{k=0}^{M-1} \nHH_k\mathbf{1}_{\nXX= \nXX_k}-\infty\mathbf{1}_{\nXX\notin \{\nXX_k:0\leq k\leq M-1\}}, M\in \mathbb{Z}_+, \nHH_k\in \mathbb{R}, \nXX_0>\cdots>\nXX_{M-1}\}.
        \end{equation}
    We will also be working with the following subspace of $\mnw$ consisting of normalized height functions:
    \begin{equation}
            \mnw_0:= \{\h\in \mnw: \nXX_0=\nHH_0=0\}.
        \end{equation}
    \end{defn}
     We start with proving Theorem \ref{thm: kpz_multitime} under the additional assumption that the initial condition $\h$ for the KPZ fixed point is in $\mnw_0$, namely it takes the form 
    \begin{equation}\label{eq: severalwedge}
        \h(\nXX):= \sum_{k=0}^{M-1} \nHH_k\mathbf{1}_{\nXX=\nXX_k}-\infty\mathbf{1}_{\nXX\notin \{\nXX_k: 0\leq k\leq M-1\}},
    \end{equation}
    where $M\in \mathbb{Z}_+$, $0=\nXX_0>\nXX_1>\cdots>\nXX_{M-1}$ and $\nHH_0= 0$.  Note that if $\h(\nXX)=\sum_{k}\nHH_k\mathbf{1}_{\nXX\neq \nXX_k}-\infty\mathbf{1}_{\nXX\notin \{\nXX_k: k\}}\in \mnw$, then $\widehat{\h}(\cdot):= \h(\cdot +\nXX_0)-\nHH_0\in \mnw_0$. By the invariance property of the KPZ fixed point and also the structure of $\limess_\h$ one can extend the formula to $\h\in \mnw$ from $\mnw_0$, see Section \ref{sec: proof_thm} for  explanations.
    
    We approximate $\h\in \mnw_0$ by the following sequence of height functions $\{\h^\varepsilon\}_{\varepsilon>0}$: 
    \begin{equation}\label{eq: height_approximate}
        \h^\varepsilon(\nXX):= \varepsilon^{1/2} \mathsf{h}^\varepsilon(2\varepsilon^{-1}\nXX),
    \end{equation}
    where $\mathsf{h}^\varepsilon$ is piecewise linear with slope $\pm 1$ such that $\mathsf{h}^\varepsilon( 2\varepsilon^{-1}\nXX_k)= \varepsilon^{-1/2}\nHH_k+O(1)$ for each $0\leq k\leq M-1$. In terms of TASEP particle configurations, $\mathsf{h}^\varepsilon$ corresponds to setting the occupation functions $\widehat{\eta}(x,0)$ defined in \eqref{eq: occupation} as:
    \begin{equation}\label{eq: occupation_approx}
        \widehat{\eta}(x,0):=\begin{cases}
            +1,\ & \text{if } \varepsilon^{-1}(\nXX_k+\nXX_{k+1})+\varepsilon^{-\frac{1}{2}}\frac{\nHH_{k+1}-\nHH_{k}}{2}\leq x< 2\varepsilon^{-1}\nXX_k\text{ for some }0\leq k\leq M-1,\\
            -1,\  &\text{if } 2\varepsilon^{-1}\nXX_{k}\leq x<\varepsilon^{-1}(\nXX_{k-1}+\nXX_{k})+\varepsilon^{-\frac{1}{2}}\frac{\nHH_{k}-\nHH_{k-1}}{2}\text{ for some }0\leq k\leq M-1. 
        \end{cases}
    \end{equation}   
    Roughly, we are putting densely packed particles between $2\varepsilon^{-1}\nXX_k$ and $\varepsilon^{-1}(\nXX_k+\nXX_{k+1})+\varepsilon^{-\frac{1}{2}}\frac{\nHH_k-\nHH_{k+1}}{2}$ and no particles between $2\varepsilon^{-1}\nXX_{k}$ and $\varepsilon^{-1}(\nXX_k+\nXX_{k-1})+\varepsilon^{-\frac{1}{2}}\frac{\nHH_{k-1}-\nHH_{k}}{2}$, for $0\leq k\leq M-1$. Here $\nXX_{M}$ is understood as $-\infty$ and $\nXX_{-1}$ is understood as $+\infty$. 
    
   The following proposition implies Theorem \ref{thm: kpz_multitime} under the additional assumption that $\h\in \mnw_0$.
\begin{prop}\label{prop: convergence_nw}
    Given $\h\in \mnw_0$. Let $(\h^\varepsilon)_{\varepsilon>0}$ be the approximating sequence of initial height functions for TASEP defined as in \eqref{eq: height_approximate} and \eqref{eq: occupation_approx}. Given $z_1,\ldots,z_m\in \mathbb{C}$ with $|z_i|=r<1$ for $1\leq i\leq m-1$.  To lighten the notation we will suppress the dependency on $\varepsilon$ at most places and write 
    \begin{equation*}
        \mathcal{D}_{Y^\varepsilon}(z_1,\ldots,z_{m-1}) := \mathcal{D}_{Y(\mathsf{h}^\varepsilon)}(z_1,\ldots,z_{m-1}; \mathbf{k}^\varepsilon,\mathbf{a}^\varepsilon, \mathbf{t}^{\varepsilon}),
    \end{equation*}
    where $\mathcal{D}_{Y}(z_1,\ldots,z_{m-1})= \mathcal{D}_{Y}(z_1,\ldots,z_{m-1}; \mathbf{k},\mathbf{a},\mathbf{t})$ is defined in Section \ref{sec:Fredholm_representation}. Here we use boldface letters to denote vectors, for example, $\mathbf{k}:=(k_1,\ldots,k_m)$. Assume the parameters satisfy
    \begin{equation}
        k_\ell^\varepsilon :=\frac{1}{2}\varepsilon^{-\frac{3}{2}}\TT_\ell-\varepsilon^{-1}\XX_\ell-\frac{1}{2}\varepsilon^{-\frac{1}{2}}\HH_\ell+O(1),\quad  a^{\varepsilon}_\ell := 2 \varepsilon^{-1}\XX_\ell+O(1),\quad t^{\varepsilon}_{\ell}:= 2\varepsilon^{-\frac{3}{2}}\TT_\ell,\quad \text{for }1\leq \ell\leq m.
    \end{equation}
   Then we have 
    \begin{equation*}
        \lim_{\varepsilon\to 0}  \prod_{\ell=1}^{m-1} \oint_0 \frac{\diff z_\ell}{2\pi\ii z_\ell(1-z_\ell)}\mathcal{D}_{Y^\varepsilon}(z_1,\ldots,z_{m-1}) = \prod_{\ell=1}^{m-1} \oint_0 \frac{\diff z_\ell}{2\pi\ii z_\ell(1-z_\ell)}\mathrm{D}_{\h}(z_1,\ldots,z_{m-1}).
    \end{equation*}
\end{prop}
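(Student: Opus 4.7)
The plan is to prove this proposition at the level of the series expansion (Definition \ref{def:D_Y_series} and Proposition \ref{prop: series}) rather than the Fredholm determinants directly, since the series has a block-diagonal structure where the initial condition enters only through the top-left block. First I would perform the standard $1{:}2{:}3$ change of variables in each $n$-th term $\mathcal{D}_{Y^\varepsilon}^{(\mathbf n)}$, substituting $u_i^{(\ell)} = -1 + \varepsilon^{1/2}\xi_i^{(\ell)}$ and $v_i^{(\ell)} = \varepsilon^{1/2}\eta_i^{(\ell)}$. Under this substitution, the small circular contours $\Sigma_{\ell,\LL}$ around $-1$ and $\Sigma_{\ell,\RR}$ around $0$ can be deformed (using analyticity) to finite truncations of the Airy-type contours $\Gamma_{\ell,\LL}$ and $\Gamma_{\ell,\RR}$, with the Jacobian factors $\varepsilon^{1/2}$ absorbing into the measure. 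The exponential weights $f_\ell^\varepsilon(u_i^{(\ell)})/f_\ell^\varepsilon(v_i^{(\ell)})$ converge pointwise to $\mathrm{f}_\ell(\xi_i^{(\ell)})/\mathrm{f}_\ell(\eta_i^{(\ell)})$ via a standard cubic Taylor expansion of $\log(w^{k}(w+1)^{-a-k}e^{tw})$ around $w=-1$ and $w=0$, using the scalings $k_\ell^\varepsilon, a_\ell^\varepsilon, t_\ell^\varepsilon$; the Cauchy determinants $\mathrm{C}(\cdots)$ converge trivially since differences like $u-v \to \varepsilon^{1/2}(\xi-\eta)$ and powers of $\varepsilon^{1/2}$ cancel across the determinant.

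The heart of the proof is to show that the rescaled characteristic function converges: for $u = -1+\varepsilon^{1/2}\xi$, $v = \varepsilon^{1/2}\eta$,
\begin{equation*}
\varepsilon^{1/2}\,\Kess_{Y^\varepsilon}(v,u) \;\longrightarrow\; \limess_\h(\eta,\xi)
\end{equation*}
pointwise on $\Gamma_{1,\RR}\times\Gamma_{1,\LL}$. Here I would exploit the probabilistic representation in Theorem \ref{thm:essential_hitting}. Under the scaling $G_k \leftrightarrow \varepsilon^{1/2}(G_{[2\varepsilon^{-1}\alpha]} + \alpha\varepsilon^{-1})$ (adjusting for the $1/2$ drift of the geometric random walk), Donsker's invariance principle gives convergence of the random walk $G$ to a Brownian motion $\B$ with diffusivity $2$, and the discrete hitting time $\tau$ of the strict epigraph of $Y^\varepsilon$ converges to the hitting time $\ta_+$ of the hypograph of $\h$. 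The sign flip between ``strict epigraph of particles'' and ``hypograph of height function'' is exactly the height-function duality \eqref{eq: occupation}–\eqref{eq: occupation_approx}. The discrete sum $\sum_{z\in \mathbb{Z}}(2u+2)^z\cdot \mathbb{E}[\cdots]$ becomes the integral $\int_\mathbb{R} \diff s\,\mathrm{e}^{s\eta}\mathbb{E}_{\B(\cdot)=s}[\cdots]$, and the exponential weight $(-v/(v+1))^\tau 2^{-G_\tau-1}$ converges to $\exp(-\ta_+\xi^2 - \B(\ta_+)\xi)$ after tracking the scaling (which is a routine cubic/quadratic expansion). To handle the fact that the geometric walk is restricted to $\tau<N$ with $N$ proportional to $\varepsilon^{-1}$, I would use Proposition \ref{prop: shift} with $\a = -L$ where $L=\max|\nXX_k|$, so the Brownian motion starts at a time before the support of $\h$ and only needs to hit $\h$ within $[-L,L]$; the contribution from $\tau$ near $N$ is exponentially negligible since the random walk drifts to $-\infty$ relative to $\h^\varepsilon$.

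The remaining step is to interchange the limit $\varepsilon \to 0$ with the $\mathbf{n}$-summation and with the $z_\ell$- and contour integrals. For the contour integrals, the truncation of the $\Gamma$-contours to large finite arcs is justified by the super-exponential decay coming from $\widehat{\mathrm{f}}_i(\zeta)$, combined with the bound \eqref{eq:bound_chi_h} from Proposition \ref{prop:convergence_lim} on $\limess_\h$. I would prove the analogous bound uniformly in $\varepsilon$ on $\Kess_{Y^\varepsilon}$ by repeating the estimates in the proof of Proposition \ref{prop:convergence_lim} at the level of the geometric random walk: splitting the starting point $z$ according to whether it is above or below the maximum $\beta+1$ of $\h^\varepsilon$, and using Gaussian tail estimates for hitting probabilities of the rescaled walk (a discrete reflection-principle bound replacing the Brownian one). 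Combined with the standard Hadamard-type bound on the Cauchy determinants and the explicit geometric decay provided by the factors $(1-z_\ell)^{n_\ell}(1-z_\ell^{-1})^{n_{\ell+1}}$, this yields a summable envelope and permits dominated convergence throughout. Finally, the outer $z_\ell$-integrals are over compact contours with bounded $|z_\ell|=r<1$, so bounded convergence suffices there.

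The main obstacle is the convergence of the characteristic function $\Kess_{Y^\varepsilon}$ to $\limess_\h$, and specifically proving it with enough uniformity in $(\eta,\xi)$ on the unbounded $\Gamma$-contours to justify dominated convergence. The pointwise statement is a Donsker-type argument, but getting a Gaussian-type bound matching \eqref{eq:bound_chi_h} at the discrete level — with constants independent of $\varepsilon$ and depending only on the support $L$ and the maximum $\beta$ of $\h$ — requires careful work with the reflection principle for the geometric walk and a delicate analysis of the boundary case where $\tau$ approaches the cutoff $N$. Everything else (convergence of the analytic weights, of the Cauchy determinants, contour deformations) is routine KPZ scaling analysis in the spirit of \cite{Liu2022,LiuZhang25}.
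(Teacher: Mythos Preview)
Your overall architecture matches the paper's: work with the series expansion, prove termwise convergence (Lemma~\ref{lm: convergence_term}) and a uniform-in-$\varepsilon$ summable bound (Lemma~\ref{lm: bound_term}), and conclude by dominated convergence. However, there is one concrete error and one substantive difference in method.

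The change of variables is wrong. The saddle point of $f_\ell(w)=w^{k}(w+1)^{-a-k}e^{tw}$ under the KPZ scaling is at $w=-\tfrac12$ for \emph{both} the $u$- and $v$-variables, not at $-1$ and $0$. The paper uses $u=-\tfrac12+\tfrac12\varepsilon^{1/2}\xi$, $v=-\tfrac12+\tfrac12\varepsilon^{1/2}\eta$ (see \eqref{eq: changevariable} and \eqref{eq: fi_scaled}); with your substitution $u=-1+\varepsilon^{1/2}\xi$, $v=\varepsilon^{1/2}\eta$ the factors $(w+1)^{-a-k}$ and $w^k$ do not produce the cubic exponent $-\tfrac13\tau\zeta^3$ and the whole asymptotic collapses.

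For the characteristic function, the paper does \emph{not} use a Donsker-type argument. It exploits the key structural fact that for $\h\in\mnw_0$ the approximating configuration $Y^\varepsilon$ consists of $M$ densely packed clusters, so the geometric walk can only exit the epigraph at the $M$ cluster boundaries: $\mathbb{P}(\tau\notin\{\st_0,\dots,\st_{M-1}\})=0$. This turns $\Kess_{Y^\varepsilon}$ into an explicit finite sum over $k=0,\dots,M-1$ (equation \eqref{eq: rescaled_kernel}), which is rewritten as a Riemann sum \eqref{eq:riemann_sum} and matched term-by-term to the finite-sum expression \eqref{eq:chi_integral} for $\limess_\h$ via the local central limit theorem for the geometric walk. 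The uniform bound \eqref{eq:uniform_bound_quadratic} is obtained by direct estimation of this same finite sum using the moment generating function of the geometric increments --- no reflection principle is needed. This is exactly why the paper restricts first to $\mnw_0$ and only afterward extends to $\ucc$ at the level of the limiting formula (Section~\ref{sec: compact support}); your Donsker route would in principle apply to more general $\h$ but loses this simplification and makes the uniform-in-$(\xi,\eta,\varepsilon)$ control genuinely harder. Relatedly, your worry about the cutoff $\tau<N$ is vacuous here: since $\tau\in\{\st_0,\dots,\st_{M-1}\}$ almost surely and $\st_{M-1}\ll N$, there is no boundary effect, and invoking Proposition~\ref{prop: shift} (which is anyway proved only later, in Section~\ref{sec: shift}) is unnecessary.
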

Proposition \ref{prop: convergence_nw} is a consequence of the following two lemmas and the dominated convergence theorem.  
    \begin{lm}\label{lm: convergence_term}
        Let $\mathcal{D}^{(\mathbf{n})}_{Y^\varepsilon}$ and $\mathrm{D}^{(\mathbf{n})}_{\h}$ be as in \eqref{eq:D_nY} and \eqref{eq: Dn_lim}, where $\h$ is given by \eqref{eq: severalwedge} and $Y^\varepsilon=Y(\mathsf{h}^\varepsilon)$ is described in \eqref{eq: height_approximate}. Then for each $\mathbf{n}\in (\mathbb{Z}_{\geq 0})^m$ and $(z_1,\ldots,z_{m-1})\in (\mathbb{D}(0,1))^{m-1}$, we have 
        \begin{equation}
            \lim_{\varepsilon\to 0}\mathcal{D}^{(\mathbf{n})}_{Y^\varepsilon} (z_1,\ldots,z_{m-1};  \mathbf{k}^\varepsilon,\mathbf{a}^\varepsilon, \mathbf{t}^{\varepsilon})=  \mathrm{D}^{(\mathbf{n})}_{\h} (z_1,\ldots,z_{m-1};  \bm{\XX},\bm{\HH},\bm{\TT}).
        \end{equation}
    \end{lm}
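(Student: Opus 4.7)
The plan is to perform a term-by-term steepest descent analysis of the series representation in Definition~\ref{def:D_Y_series}, following the narrow-wedge blueprint of \cite{Liu2022,LiuZhang25} but with the initial-condition kernel $\Kess_{Y^\varepsilon}$ replaced by its hitting-expectation form given by Theorem~\ref{thm:essential_hitting}. First I would introduce the substitution $u_j^{(\ell)} = -1 + \tfrac{1}{2}\varepsilon^{1/2}\xi_j^{(\ell)}$ on every left contour $\Sigma_{\ell,\LL}$ and $v_j^{(\ell)} = \tfrac{1}{2}\varepsilon^{1/2}\eta_j^{(\ell)}$ on every right contour $\Sigma_{\ell,\RR}$, and then deform the rescaled small-circle contours to the Airy-type contours $\Gamma_{\ell,\LL}, \Gamma_{\ell,\RR}$ of Definition~\ref{def:D_general} by Cauchy's theorem, using the analyticity of $f_\ell$ away from $-1$ and $0$ and the analyticity of $\Kess_{Y^\varepsilon}(v,u)$ on $\Omega_\RR\times(\Omega_\LL\setminus\{-1\})$ from Definition~\ref{def:KY_ess}. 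Under this substitution the measures $d\mu_{\mathbf{z}}(u), d\mu_{\mathbf{z}}(v)$ become $d\mu_{\mathbf{z}}(\xi), d\mu_{\mathbf{z}}(\eta)$ up to an overall Jacobian of $(\tfrac{1}{2}\varepsilon^{1/2})^{2(n_1+\cdots+n_m)}$, which will be absorbed by the rescaled Cauchy determinants.

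Next I would analyze the smooth factors. A third-order Taylor expansion of $\log f_\ell$ around the saddles $w=0,-1$ shows that the quadratic terms cancel by construction of the KPZ scaling, the cubic term survives, and divergent prefactors appear symmetrically in the $u$ and $v$ variables so they cancel in the ratio $f_\ell(u)/f_\ell(v)$. The net result is $\prod_\ell\prod_{i_\ell}\mathrm{f}_\ell(\xi_{i_\ell}^{(\ell)})/\mathrm{f}_\ell(\eta_{i_\ell}^{(\ell)})$ as in \eqref{eq: Dn_lim}. For the Cauchy determinants, intra-side differences such as $u_i^{(\ell)}-u_j^{(\ell)}$ and $v_i^{(\ell)}-v_j^{(\ell)}$ each produce a factor of $\tfrac{1}{2}\varepsilon^{1/2}$ that exactly cancels the Jacobian from the first step, while cross differences $u_i^{(\ell)}-v_j^{(\ell')}=-1+O(\varepsilon^{1/2})$ produce $O(1)$ constants that reassemble, after a careful sign bookkeeping, into the limiting $\mathrm{C}(\xib^{(\ell)}\sqcup\etab^{(\ell+1)};\etab^{(\ell)}\sqcup\xib^{(\ell+1)})$ and $\mathrm{C}(\xib^{(m)};\etab^{(m)})$ factors.

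The new and principal step is the convergence $\Kess_{Y^\varepsilon}(v,u) \to \limess_\h(\eta,\xi)$ along the substitution above. Rescaling the summation variable $z$ and the walk $G_k$ by $\varepsilon^{-1/2}$ in space and the walk-time $k$ by $\varepsilon^{-1}$, Donsker's invariance principle turns the centered geometric walk $(G_k+2k)$ into a two-sided Brownian motion of diffusivity $2$, the strict epigraph of $Y^\varepsilon$ converges to the hypograph of $\h \in \mnw_0$, and the first-passage time $\tau$ rescales to the Brownian hitting time $\ta_+$ of \eqref{eq: hittingtimes}. A Taylor expansion of $(2u+2)^z\cdot 2/(2v+2)^{G_\tau+1}\cdot \bigl(\tfrac{-v}{v+1}\bigr)^\tau$ under the substitution then yields the exponent $\mathrm{e}^{s\eta - \ta_+\xi^2 - \B(\ta_+)\xi}$ featured in \eqref{eq: limess}. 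Splitting the outer sum over $z$ into three regions according to where $G_0$ lies relative to the support of $\h^\varepsilon$ in forward and backward directions, and combining with a time-reversal argument to produce the $\ta_-$ contribution, reproduces via inclusion--exclusion the three-term decomposition of Definition~\ref{def: limess}.

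The principal technical obstacle is to upgrade the pointwise convergence above to convergence of the finite-dimensional integrals in \eqref{eq:D_nY}. This requires a pre-limit analogue of the uniform bound \eqref{eq:bound_chi_h} controlling $|\Kess_{Y^\varepsilon}(v,u)|$ along the rescaled contours uniformly in $\varepsilon$, together with tail control on the $\xi,\eta$ contours via the cubic decay of $\mathrm{f}_\ell$. One also needs to control the overshoot of the geometric walk above each spike of $\h$ uniformly in $\varepsilon$ so that the discrete hitting expectation indeed converges to the Brownian one, which is standard once $\h$ has only finitely many spikes. Given such dominations, termwise dominated convergence applied to \eqref{eq:D_nY} yields the lemma.
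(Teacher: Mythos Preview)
Your substitution is centered at the wrong points: you write $u_j^{(\ell)} = -1 + \tfrac{1}{2}\varepsilon^{1/2}\xi_j^{(\ell)}$ and $v_j^{(\ell)} = \tfrac{1}{2}\varepsilon^{1/2}\eta_j^{(\ell)}$, but the steepest-descent critical point of each $f_\ell$ under the scaling \eqref{eq: parameters_scaled} is the \emph{common} double saddle $w=-\tfrac{1}{2}$, not the pair $(-1,0)$. The paper (and every argument of this type) uses $u=-\tfrac{1}{2}+\tfrac{1}{2}\varepsilon^{1/2}\xi$, $v=-\tfrac{1}{2}+\tfrac{1}{2}\varepsilon^{1/2}\eta$; see \eqref{eq: changevariable} and Lemma~\ref{lm: function_converge}. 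With your choice, $u+1 = \tfrac{1}{2}\varepsilon^{1/2}\xi$ so $(u+1)^{-a_\ell-k_\ell}$ carries a divergent power of $\varepsilon$ that does \emph{not} cancel against anything in the $v$-part, and the ratio $f_\ell(u)/f_\ell(v)$ does not converge to $\mathrm{f}_\ell(\xi)/\mathrm{f}_\ell(\eta)$. Equally fatal is your scaling of the Cauchy determinants: under the correct substitution every difference $u_i^{(\ell)}-v_j^{(\ell')}$, $u_i^{(\ell)}-u_j^{(\ell+1)}$, $v_i^{(\ell+1)}-v_j^{(\ell)}$ is of order $\varepsilon^{1/2}$, giving the exact identity $\mathrm{C}(U^{(\ell)}\sqcup V^{(\ell+1)};V^{(\ell)}\sqcup U^{(\ell+1)})=(2/\varepsilon)^{(n_\ell+n_{\ell+1})/2}\,\mathrm{C}(\xib^{(\ell)}\sqcup\etab^{(\ell+1)};\etab^{(\ell)}\sqcup\xib^{(\ell+1)})$ used in the paper. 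Your claim that the cross differences are $-1+O(\varepsilon^{1/2})$ and ``reassemble after a careful sign bookkeeping'' into the limiting Cauchy determinants cannot be made to work: the limiting determinant has entries $1/(\xi_i-\eta_j)$, not constants.

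Your description of the kernel limit is also off for the case at hand. For $\h\in\mnw_0$ one has $\nXX_0=0>\nXX_1>\cdots$, so $\ta_+=0$ almost surely and the first and third terms in \eqref{eq: limess} cancel, leaving only the single $\ta_-$ integral; there is no three-term inclusion--exclusion to recover and no time-reversal needed. The paper exploits that for multi-wedge data the geometric walk can only enter the epigraph at the $M$ discrete indices $\st_0,\ldots,\st_{M-1}$, writes the expectation as a finite sum \eqref{eq: rescaled_kernel}, and obtains both the pointwise limit (via the local CLT for $p_{\st_{i+1}-\st_i}$) and the uniform bound \eqref{eq:uniform_bound_quadratic} from that explicit structure. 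Invoking Donsker here is not wrong in spirit, but it does not by itself produce the quadratic-exponential bound you need to dominate the integrand on the $\Gamma$-contours; that bound is what makes dominated convergence go through in the paper's argument.
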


    \begin{lm}\label{lm: bound_term}
        There exists constant $C>0$ such that 
        \begin{equation}
            \left|\mathcal{D}^{(\mathbf{n})}_{Y^\varepsilon} (z_1,\ldots,z_{m-1};  \mathbf{k}^\varepsilon,\mathbf{a}^\varepsilon, \mathbf{t}^{\varepsilon})\right|\leq \prod_{\ell=1}^{m-1} \frac{(1+|z_{\ell+1}|)^{2n_{\ell+1}}}{|z_{\ell}|^{n_{\ell+1}}|1-z_\ell|^{n_{\ell+1}-n_\ell}}\cdot\prod_{\ell=1}^m n_\ell^{n_\ell}\cdot C^{n_1+\cdots+n_m},
        \end{equation}
        for any $\mathbf{n}=(n_1,\ldots,n_m)\in (\mathbb{Z}_{\geq 0})^m$ and $(z_1,\ldots,z_{m-1})\in (\mathbb{D}(0,1))^{m-1}$.
    \end{lm}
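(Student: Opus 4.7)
The strategy is to bound the integrand in the series expansion \eqref{eq:D_nY} directly, splitting the estimate into a $z$-dependent part (coming from the explicit prefactor and the weights in $\mathrm{d}\mu_{\boldsymbol{z}}$) and a $z$-independent part (the ratios $f_\ell(u)/f_\ell(v)$, the characteristic function $\Kess_{Y^\varepsilon}$, and the Cauchy determinants). The $z$-dependent part will furnish the first product in the claimed bound, while the $z$-independent part will furnish $\prod_\ell n_\ell^{n_\ell}\cdot C^{n_1+\cdots+n_m}$.

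For the $z$-dependent factor, I would rewrite the explicit prefactor via the elementary identity
\[
(1-z_\ell)^{n_\ell}(1-z_\ell^{-1})^{n_{\ell+1}}=(-1)^{n_{\ell+1}}\frac{(1-z_\ell)^{n_\ell+n_{\ell+1}}}{z_\ell^{n_{\ell+1}}},
\]
which contributes $|1-z_\ell|^{n_\ell+n_{\ell+1}}/|z_\ell|^{n_{\ell+1}}$. The measure $\mathrm{d}\mu_{\boldsymbol{z}}$ on the contours at level $\ell+1$ has weights bounded in absolute value by $(1+|z_\ell|)/|1-z_\ell|$, and since there are $2n_{\ell+1}$ integration variables at that level, the measure contributes at most $(1+|z_\ell|)^{2n_{\ell+1}}/|1-z_\ell|^{2n_{\ell+1}}$. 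Multiplying these produces the claimed $z$-dependent prefactor.

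For the $z$-independent estimate I would use three ingredients. First, choosing the contours $\Sigma_{\ell,\LL},\Sigma_{\ell,\RR}$ to be the steepest-descent circles of radius $1/2$ centered at $-1$ and $0$ respectively (these are the pre-images under the $1:2:3$ scaling of the Airy-type contours $\Gamma_{\ell,\LL},\Gamma_{\ell,\RR}$), a standard saddle-point calculation that exploits the scaling of $k_\ell^\varepsilon,a_\ell^\varepsilon,t_\ell^\varepsilon$ shows that each ratio $|f_\ell(u_i^{(\ell)})/f_\ell(v_i^{(\ell)})|$ is uniformly bounded in $\varepsilon$ by a constant, contributing $C^{n_1+\cdots+n_m}$. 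Second, Hadamard's inequality applied to the $n_1\times n_1$ determinant $\det[\Kess_{Y^\varepsilon}(v^{(1)}_i,u^{(1)}_j)]$ and to each Cauchy determinant of size $n_\ell+n_{\ell+1}$, together with the elementary inequality $(n_\ell+n_{\ell+1})^{(n_\ell+n_{\ell+1})/2}\le C^{n_\ell+n_{\ell+1}} n_\ell^{n_\ell/2} n_{\ell+1}^{n_{\ell+1}/2}$, yields the $\prod_\ell n_\ell^{n_\ell}$ factor once one accounts that each index $n_\ell$ appears in two Cauchy blocks (plus the $\Kess$ block when $\ell=1$). Third, Hadamard requires uniform pointwise bounds on the matrix entries: the Cauchy entries are controlled by the positive separation between the nested contours, while the entries of the $\Kess$ block demand a uniform-in-$\varepsilon$ bound $\sup|\Kess_{Y^\varepsilon}(v,u)|\le C$.

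The main obstacle is this last uniform bound on $\Kess_{Y^\varepsilon}$. I would obtain it from the hitting-expectation representation of Theorem \ref{thm:essential_hitting}, adapting the strategy of Proposition \ref{prop:convergence_lim} to the discrete random walk $G_k$: decompose the sum over starting points $z$ into the regions above, within, and below the compact portion of the initial profile $Y^\varepsilon$, and estimate each hitting probability using a discrete reflection-principle-type estimate. The assumption $\h\in\mnw_0$ ensures that $Y^\varepsilon$ has only finitely many nontrivial features, concentrated in a window of size $O(\varepsilon^{-1})$, and under the scaling that realizes $v,u$ on the chosen radius-$1/2$ contours this produces a bound with the same structure as \eqref{eq:bound_chi_h} but with constants independent of $\varepsilon$. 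Combining the three estimates then gives the claimed inequality.
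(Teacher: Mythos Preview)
Your $z$-dependent bookkeeping and the use of Hadamard's inequality for the Cauchy blocks are both correct and match the paper. The gap is in the third ingredient: the claim that $\sup|\Kess_{Y^\varepsilon}(v,u)|\le C$ uniformly in $\varepsilon$ is false, and the strategy you outline to prove it would in fact refute it.

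The kernel $\Kess_{Y^\varepsilon}$ is not uniformly bounded on any admissible family of contours. Near the critical point $-\tfrac12$ one has $\tfrac12\varepsilon^{1/2}\Kess_{Y^\varepsilon}\to\chi_\h$, so $\Kess_{Y^\varepsilon}$ itself is of order $\varepsilon^{-1/2}$ there. More seriously, away from the critical point the hitting-expectation computation you propose (the discrete analog of Proposition~\ref{prop:convergence_lim}) yields exactly the estimate \eqref{eq: uniformbound}--\eqref{eq:uniform_bound_quadratic}: in the rescaled variables $\xi,\eta$ one gets a bound of the form $\exp\bigl(C(|\xi|^2+|\eta|^2)\bigr)/\Re(\eta)$, with the constant $C$ proportional to the width of the support of $\h$. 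Translated back to fixed circles in $\Omega_\LL,\Omega_\RR$ at distance $\delta>0$ from the critical radius, this is a bound of size $\exp(c\,\delta\,\varepsilon^{-1})$, since $|\xi|,|\eta|\sim\delta\varepsilon^{-1/2}$. So on any nested family of fixed contours the $\Kess$ entries blow up exponentially in $\varepsilon^{-1}$, and Hadamard applied to the raw $\Kess$ block cannot give an $\varepsilon$-uniform constant.

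The paper's fix is not to seek a better bound on $\Kess$ alone but to borrow decay from $f_1$: write $\mathrm{f}_1^\varepsilon=(\mathrm{f}_1^\varepsilon)^{1/2}(\mathrm{f}_1^\varepsilon)^{1/2}$ and absorb one square-root factor into the rows and columns of the $\Kess$ determinant. By Lemma~\ref{lm: function_converge}(b), $|\mathrm{f}_1^\varepsilon(\xi)|^{1/2}/|\mathrm{f}_1^\varepsilon(\eta)|^{1/2}$ decays like $\exp\bigl(-c(|\xi|^3+|\eta|^3)\bigr)$ along the (rescaled, Airy-type) contours, and this cubic-exponential decay dominates the quadratic-exponential growth of the rescaled $\Kess$. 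The modified entries $\tfrac{(\mathrm{f}_1^\varepsilon(\xi_i))^{1/2}}{(\mathrm{f}_1^\varepsilon(\eta_j))^{1/2}}\cdot\tfrac12\varepsilon^{1/2}\Kess_{Y^\varepsilon}$ are then uniformly bounded, Hadamard gives $n_1^{n_1/2}c^{n_1}$, and the remaining half of $\mathrm{f}_1^\varepsilon$ still has enough decay to make the $\xi^{(1)},\eta^{(1)}$ integrals finite. This square-root redistribution is the missing idea in your proposal; without it the bound you are aiming for does not hold.
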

The remaining of this section is organized as follows: we will first prove a uniform bound for $\Kess_{Y^\varepsilon}$ and the pointwise convergence of $\Kess_{Y^\varepsilon}$ to $\chi_{\h}$ in Section \ref{sec: essential_convergence}. Then we will use these results to prove Lemma \ref{lm: convergence_term} in Section \ref{sec: proof_convergence} and Lemma \ref{lm: bound_term} in Section \ref{sec: proof_bound}, these complete the proof of Proposition \ref{prop: convergence_nw}. 
\subsubsection{Pointwise convergence of the characteristic function}\label{sec: essential_convergence}
For the approximating sequence of height functions $\mathsf{h}^\varepsilon$ described in \eqref{eq: occupation_approx}, we denote $Y^\varepsilon$ the corresponding particle configurations for TASEP. It consists of exactly $M$ clusters of densely packed particles. To lighten the notations, we denote temporarily the indices of the rightmost particle of each cluster by $\st_0,\ldots,\st_{M-1}$, from right to left. We have 

\begin{equation}\label{eq: indices}
\st_i:=-\lfloor\varepsilon^{-1}\nXX_i\rfloor-\lfloor\frac12\varepsilon^{-\frac{1}{2}}\nHH_i\rfloor+1,\quad y_{\st_i}:=2\lfloor\varepsilon^{-1}\nXX_i\rfloor, \quad 0\leq i\leq M-1.
\end{equation}
Recall that we assume $\nXX_0=\nHH_0=0$. Thus
\begin{equation}
\label{eq:scaling_st_yst}
    \st_0 = 1, \quad \text{ and  } y_{\st_0}=0.
\end{equation}

The goal of this section is to analyze the asymptotic behaviors of the characteristic function $\Kess_{Y^\varepsilon}(v,u)$. We write
\begin{equation}
    u= -\frac{1}{2}+\frac12\varepsilon^{\frac{1}{2}}\xi,\quad v = -\frac{1}{2}+\frac12\varepsilon^{\frac{1}{2}}\eta.
\end{equation}
 The main result of this section is summarized in the following proposition:
\begin{prop}
    Under the same assumption as in Proposition \ref{prop: convergence_nw}, we have 
    \begin{enumerate}[(a)]
        \item  For any $\xi\in \mathbb{C}_\LL, \eta\in \mathbb{C}_\RR$ fixed,
        \begin{equation}
        \label{eq:pointwise_convergence_chi}
            \lim_{\varepsilon\to 0}\frac{1}{2}\varepsilon^{\frac{1}{2}}\cdot \Kess_{Y^{\varepsilon}}\left(-\frac{1}{2}+\frac{1}{2}\varepsilon^{\frac{1}{2}}\eta,-\frac{1}{2}+\frac{1}{2}\varepsilon^{\frac{1}{2}}\xi\right)= \limess_{\mathfrak{h}}(\eta,\xi).
        \end{equation} 
        \item 
        Assume that $\varepsilon>0$, and  $\xi\in \mathbb{C}_\LL, \eta\in \mathbb{C}_\RR$ satisfy $0<|1+\varepsilon^{\frac{1}{2}}\xi|<1$. Then the following estimate holds
        \begin{equation}\label{eq: uniformbound}
        \begin{split}
            &\left|\frac12\varepsilon^{\frac{1}{2}}\cdot \Kess_{Y^{\varepsilon}}\left(-\frac{1}{2}+\frac12\varepsilon^{\frac{1}{2}}\eta,-\frac{1}{2}+\frac12\varepsilon^{\frac{1}{2}}\xi\right)\right|\\
    &\leq \frac{1}{\Re(\eta)}\left(1+(M-1)\frac{|1-\varepsilon \eta^2|^{\st_{M-1}}}{|1+\varepsilon^{\frac{1}{2}}\eta|^{2\st_{M-1}+y_{\st_{M-1}+1}}}\cdot   \frac{|1+\varepsilon^{\frac{1}{2}}\xi|^{y_{\st_{M-1}}+t_{M-1}}}{(2-|1+\varepsilon^{\frac{1}{2}}\xi|)^{\st_{M-1}-1}}\right).
            \end{split}
        \end{equation}
       As a corollary, if we further assume that $|\varepsilon^{\frac{1}{2}}\xi|<100^{-1}$ and $|\varepsilon^{\frac{1}{2}}\eta|<100^{-1}$, then we have
       \begin{equation}
       \label{eq:uniform_bound_quadratic}
           \left|\frac{\varepsilon^{\frac{1}{2}}}{2}\Kess_{Y^{\varepsilon}}\left(-\frac{1}{2}+\frac{\varepsilon^{\frac{1}{2}}}{2}\eta,-\frac{1}{2}+\frac{\varepsilon^{\frac{1}{2}}}{2}\xi\right)\right| \le \frac{\mathrm{e}^{C(|\xi|^2+|\eta|^2+|\xi|+|\eta|+1)}}{\Re(\eta)},
       \end{equation}
       where $C$ is a constant that only depends on the parameters $M$ and $\nXX_i,\nHH_i$, $0\le i\le M-1$.
    \end{enumerate}
\end{prop}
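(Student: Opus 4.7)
The starting point is the rewriting~\eqref{eq:ess_kernel_rewriting}. Since $y_{\st_0}=0$ by~\eqref{eq:scaling_st_yst}, and since for $z\leq y_{\st_{M-1}}$ the walk cannot reach the strict epigraph within $N$ steps (each step is $\leq -1$ while $y_{i+1}=y_i-1$ inside the last cluster), this gives
\[
\Kess_{Y^\varepsilon}(v,u) = \frac{u+1}{v+1}\cdot\frac{1}{v-u} \;+\; \sum_{z=y_{\st_{M-1}}+1}^{0}(2u+2)^z\,\mathbb{E}_{G_0=z}\!\left[\frac{2\,(-v/(v+1))^\tau}{(2v+2)^{G_\tau+1}}\mathbf{1}_{\tau<N}\right].
\]
Under $u=-\frac12+\frac12\varepsilon^{1/2}\xi$ and $v=-\frac12+\frac12\varepsilon^{1/2}\eta$, the leading term multiplied by $\varepsilon^{1/2}/2$ equals $\frac{1+\varepsilon^{1/2}\xi}{1+\varepsilon^{1/2}\eta}\cdot\frac{1}{\eta-\xi}\to \frac{1}{\eta-\xi}$, which is the narrow-wedge contribution at $\XX=0$. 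All of the subsequent work will be on the remaining sum.

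For part (a), the plan is to invoke the invariance principle for the geometric random walk $G$, whose step has mean $-2$ and variance $2$: under $m=\lfloor\varepsilon^{-1}\rho\rfloor$, the rescaled path $\rho\mapsto \varepsilon^{1/2}(G_m-G_0+2m)$ converges in distribution to a Brownian motion with diffusivity $2$, matching $\B$ from Definition~\ref{def: limess}. Parametrizing the starting point as $z=y_{\st_{M-1}}+\lfloor\varepsilon^{-1/2}s\rfloor$ turns $\sum_{z}$ with the $\varepsilon^{1/2}/2$ normalization into $\int \mathrm{d}s$. Because the approximation~\eqref{eq: occupation_approx} of $Y^\varepsilon$ is tailored so that the rescaled strict epigraph converges to the hypograph of $\mathfrak{h}$, the rescaled hitting time $\varepsilon\tau$ converges to $\tau_+=\inf\{\XX\geq -L:\B(\XX)\leq \mathfrak{h}(\XX)\}$ with $L=|\nXX_{M-1}|$, and $\varepsilon^{1/2}(G_\tau+2\tau)\to \B(\tau_+)$. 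The algebraic heart is to expand $\log(1\pm\varepsilon^{1/2}\cdot)=\pm\varepsilon^{1/2}\cdot-\frac12\varepsilon(\cdot)^2\pm \frac13\varepsilon^{3/2}(\cdot)^3+O(\varepsilon^{2})$ in the three large exponents of $(2u+2)^z$, $(2v+2)^{-(G_\tau+1)}$ and $(-v/(v+1))^\tau$; the $\varepsilon^{-1/2}$-order singular contributions (coming from $z\sim \varepsilon^{-1}$, $G_\tau\sim -2\tau\sim \varepsilon^{-1}$, $\tau\sim \varepsilon^{-1}$) cancel pairwise, leaving the finite exponent $s\eta-\tau_+\xi^{2}-\B(\tau_+)\xi$ and the prefactor $\mathrm{e}^{-L\eta^{2}}$ arising from $(1+\varepsilon^{1/2}\eta)^{y_{\st_{M-1}}}$. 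This reproduces the integrand of the alternative formula~\eqref{eq: limess3} (proved via Proposition~\ref{prop: shift}), identifying the limit as $\limess_\mathfrak{h}(\eta,\xi)$.

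For part (b), bound each factor by its modulus. The leading term contributes at most $\frac{|1+\varepsilon^{1/2}\xi|}{|1+\varepsilon^{1/2}\eta|\,|\eta-\xi|}\leq \frac{1}{\Re(\eta)}$, using $|1+\varepsilon^{1/2}\xi|<1\leq |1+\varepsilon^{1/2}\eta|$ (since $\Re(\xi)<0<\Re(\eta)$) and $|\eta-\xi|\geq \Re(\eta)$. For the sum, one uses
\[
\left|\frac{2\,(-v/(v+1))^\tau}{(2v+2)^{G_\tau+1}}\right|\leq \frac{2\,|1-\varepsilon\eta^2|^\tau}{|1+\varepsilon^{1/2}\eta|^{G_\tau+1+2\tau}},
\]
via the identity $(-v)(v+1)=(1-\varepsilon\eta^2)/4$, combined with $|(2u+2)^{z}|=|1+\varepsilon^{1/2}\xi|^{z}$. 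Taking the worst case $\tau\leq \st_{M-1}-1$ and $G_\tau+1\leq y_{\st_{M-1}+1}+1$ when the walker is driven to the leftmost cluster, and unraveling the walker's trajectory as a geometric generating function (evaluated at $|1+\varepsilon^{1/2}\xi|$, hence producing the $(2-|1+\varepsilon^{1/2}\xi|)^{-(\st_{M-1}-1)}$ factor), yields~\eqref{eq: uniformbound} after summing the $(M-1)$ cluster contributions. The quadratic corollary~\eqref{eq:uniform_bound_quadratic} then follows by Taylor-expanding each $|1+\varepsilon^{1/2}\cdot|^{O(\varepsilon^{-1})}$ factor using $|\varepsilon^{1/2}\xi|,|\varepsilon^{1/2}\eta|<1/100$: after the cancellations of part (a), the residual exponent is at most a linear-plus-quadratic function of $|\xi|$ and $|\eta|$, which absorbs into $\mathrm{e}^{C(|\xi|^2+|\eta|^2+|\xi|+|\eta|+1)}$ with $C$ depending only on $M$ and the $\{\nXX_i,\nHH_i\}$.

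The main obstacle I anticipate is in part (a): verifying that the singular $\varepsilon^{-1/2}$-order contributions cancel across the three exponential factors to all relevant orders. A naive cancellation at leading order would still leave residual $\varepsilon^{1/2}\cdot\tau=O(\varepsilon^{-1/2})$ terms, and the resolution is that the cubic next-order contributions from $(-v/(v+1))^\tau$ match exactly those coming from $(2v+2)^{-(G_\tau+1)}$ once one substitutes $G_\tau\approx -2\tau+\varepsilon^{-1/2}\B(\tau_+)$. Once the pointwise limit is in hand, the dominated convergence needed to interchange $\lim_{\varepsilon\to 0}$ with $\sum_z$ is supplied by the uniform bound of part (b) as majorant.
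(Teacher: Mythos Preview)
Your part (b) sketch is essentially the paper's argument: split off the $k=0$ (narrow-wedge) term, bound the remaining $k\ge 1$ contributions by dropping the intermediate constraints and summing the geometric generating function, then invoke $\varepsilon^{1/2}/(|1+\varepsilon^{1/2}\eta|-|1+\varepsilon^{1/2}\xi|)\le 1/\Re(\eta)$. The corollary via Taylor expansion is likewise the same.

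Your part (a) proposal, however, differs from the paper's route and has a genuine gap. The paper does \emph{not} invoke a functional invariance principle. Instead it decomposes the expectation according to which cluster $k$ is hit (equation~\eqref{eq: rescaled_kernel}), writes each term as a $(k{+}1)$-fold sum over the positions $z_0,\dots,z_k$ at the cluster beginnings with explicit geometric transition probabilities, and then rescales each $z_i$ around the \emph{local} reference $y_{\st_i}$ via~\eqref{eq: scaling}. The point of this per-cluster centering is that the dangerous $O(\varepsilon^{-1})$ pieces of $z_k$ and $2\st_k$ cancel \emph{before} exponentiation: one has $z_k+2\st_k+1=-\varepsilon^{-1/2}s_k+O(1)$, so $(1+\varepsilon^{1/2}\eta)^{-(z_k+2\st_k+1)}\to \mathrm{e}^{s_k\eta}$ with no singular prefactor. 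The local CLT then handles $\varepsilon^{-1/2}p_{\st_{i+1}-\st_i}(z_{i+1}-z_i)\to \mathrm{p}_{\nXX_i-\nXX_{i+1}}(s_{i+1}-s_i)$, and the limit is read off directly as the explicit sum~\eqref{eq:chi_integral}, which is $\limess_\h$ for $\h\in\mnw_0$ via the $\ta_-$ part of~\eqref{eq: limess}.

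Your alternative---parametrize $z=y_{\st_{M-1}}+\lfloor\varepsilon^{-1/2}s\rfloor$ and target~\eqref{eq: limess3}---breaks for two linked reasons. First, the walk index $0$ corresponds to Brownian time $0$ (the rightmost cluster), not to time $-L$; so the starting height $z=G_0$ enters through $(2u+2)^z=(1+\varepsilon^{1/2}\xi)^z$ and must produce an $\mathrm{e}^{-s\xi}$ factor, not the $\mathrm{e}^{s\eta}$ of~\eqref{eq: limess3}. Second, centering $z$ at $y_{\st_{M-1}}\sim 2\varepsilon^{-1}\nXX_{M-1}$ creates a prefactor $(1+\varepsilon^{1/2}\xi)^{y_{\st_{M-1}}}\sim \exp(2\varepsilon^{-1/2}\nXX_{M-1}\xi)$, a genuinely divergent $\xi$-term with \emph{no} counterpart anywhere else in the formula (all other $\varepsilon^{-1/2}$ contributions involve $\eta$). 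The cancellations you anticipate between $(-v/(v+1))^\tau$ and $(2v+2)^{-G_\tau-1}$ do occur, but only in the $\eta$-variable; the $\xi$-singularity is an artifact of your choice of center and cannot be removed without effectively redoing the paper's per-cluster decomposition.
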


\begin{proof}
        We will prove part (b) first. Note that  the geometric random walk moves strictly downwards, so it can only go above the boundary at the beginning of each cluster, namely     
    \begin{equation*}
        \mathbb{P}(\tau\notin\{\st_0,\ldots,\st_{M-1}\})=0.
    \end{equation*}
        Here $\tau$ is defined as in \eqref{eq: stopping_geometric} and the indices $\st_0,\ldots,\st_{M-1}$ are as in \eqref{eq: indices}. Hence,
    \begin{equation}\label{eq: rescaled_kernel}
        \begin{aligned}
            &\frac{1}{2}\varepsilon^{\frac{1}{2}}\cdot\Kess_{Y^{\varepsilon}}\left(-\frac{1}{2}+\frac{1}{2}\varepsilon^{\frac{1}{2}}\eta,-\frac{1}{2}+\frac{1}{2}\varepsilon^{\frac{1}{2}}\xi\right)\\
            &= \varepsilon^{\frac{1}{2}}\sum_{z\in \mathbb{Z}}(1+\varepsilon^{\frac{1}{2}}\xi)^z\cdot \mathbb{E}_{G_0=z}\left[(1+\varepsilon^{\frac{1}{2}}\eta)^{-G_\tau-\tau-1}\cdot(1-\varepsilon^{\frac{1}{2}}\eta)^{\tau}\mathbf{1}_{\tau\le \max_\ell\{\st_\ell\}}\right]\\
            &= \varepsilon^{\frac{1}{2}}\sum_{k=0}^{M-1}\sum_{\substack{z_i\leq y_{\st_i}\\0\leq i\leq k-1}}\sum_{z_k>y_{\st_k}} \frac{(1+\varepsilon^{\frac{1}{2}}\xi)^{z_0}}{(1+\varepsilon^{\frac{1}{2}}\eta)^{z_k+1}}\cdot \frac{(1-\varepsilon^{\frac{1}{2}}\eta)^{\st_k}}{(1+\varepsilon^{\frac{1}{2}}\eta)^{\st_k}}\cdot \prod_{i=0}^{k-1} p_{\st_{i+1}-\st_{i}}(z_{i+1}-z_{i}),
        \end{aligned}
    \end{equation}
    where $p_{t-s}(z-y)$ is the transition probability $\mathbb{P}(G_t=z|G_s=y)$ for the geometric random walk $(G_k)_{k\geq 0}$ defined as in \eqref{eq: transition_geometric}. It admits the following expression:
    \begin{equation*}
        p_{t-s}(z-y)=2^{z-y}\binom{y-z-1}{t-s-1},
    \end{equation*}
    for $t-s\in \mathbb{Z}_+$ and $z-y\in \mathbb{Z}_-$.   Then \eqref{eq: rescaled_kernel} implies
    \begin{equation}
    \label{eq: rescaled_kernel_uniform_bound}
    \begin{aligned}
        &\left|\frac{1}{2}\varepsilon^{\frac{1}{2}}\cdot\Kess_{Y^{\varepsilon}}\left(-\frac{1}{2}+\frac{1}{2}\varepsilon^{\frac{1}{2}}\eta,-\frac{1}{2}+\frac{1}{2}\varepsilon^{\frac{1}{2}}\xi\right)\right|\\
        &\leq 
   \varepsilon^{\frac{1}{2}}\sum_{k=0}^{M-1}\sum_{\substack{z_i\leq y_{\st_i}\\0\leq i\leq k-1}}\sum_{z_k>y_{\st_k}} \frac{|1+\varepsilon^{\frac{1}{2}}\xi|^{z_0}}{|1+\varepsilon^{\frac{1}{2}}\eta|^{z_k+1}}\cdot \frac{|1-\varepsilon^{\frac{1}{2}}\eta|^{\st_k}}{|1+\varepsilon^{\frac{1}{2}}\eta|^{\st_k}}\cdot \prod_{i=0}^{k-1} p_{\st_{i+1}-\st_{i}}(z_{i+1}-z_{i}).
    \end{aligned}
    \end{equation}

    \bigskip

    (b)     We bound each term on the right-hand side of \eqref{eq: rescaled_kernel_uniform_bound} corresponding to index $k$.  For $k=0$ we have 
\begin{equation}
\label{eq:bound_k=0}
    \varepsilon^{\frac{1}{2}}\sum_{z_0=y_{\st_0}+1}^\infty \frac{|1+\varepsilon^{\frac{1}{2}}\xi|^{z_0}}{|1+\varepsilon^{\frac{1}{2}}\eta|^{z_0+1}}= \frac{\varepsilon^{\frac{1}{2}}}{|1+\varepsilon^{\frac{1}{2}}\eta|-|1+\varepsilon^{\frac{1}{2}}\xi|}.
\end{equation}
For $k\geq 1$, we have 
    \begin{equation}\label{eq: essential_bound}
    \begin{split}
        &\varepsilon^{\frac{1}{2}}\sum_{\substack{z_i\leq y_{\st_i}\\0\leq i\leq k-1}}\sum_{z_k>y_{\st_k}} \frac{|1+\varepsilon^{\frac{1}{2}}\xi|^{z_0}}{|1+\varepsilon^{\frac{1}{2}}\eta|^{z_k+1}}\cdot \frac{|1-\varepsilon^{\frac{1}{2}}\eta|^{\st_k}}{|1+\varepsilon^{\frac{1}{2}}\eta|^{\st_k}}\cdot \prod_{i=0}^{k-1} p_{\st_{i+1}-\st_{i}}(z_{i+1}-z_{i})\\
        &\leq \varepsilon^{\frac{1}{2}}\sum_{z_0\leq y_{\st_0}, z_{k}>y_{\st_k}} \frac{|1+\varepsilon^{\frac{1}{2}}\xi|^{z_0}}{|1+\varepsilon^{\frac{1}{2}}\eta|^{z_k+1}}\cdot \frac{|1-\varepsilon^{\frac{1}{2}}\eta|^{\st_k}}{|1+\varepsilon^{\frac{1}{2}}\eta|^{\st_k}}\cdot p_{\st_{k}-\st_{0}}(z_{k}-z_{0})\\
        &=\varepsilon^{\frac{1}{2}}\frac{|1-\varepsilon^{\frac{1}{2}}\eta|^{\st_k}}{|1+\varepsilon^{\frac{1}{2}}\eta|^{\st_k}}\cdot \sum_{\delta = y_{\st_k}-y_{\st_0}+1}^{\st_0-\st_k}  |1+\varepsilon^{\frac{1}{2}}\eta|^{-\delta}p_{\st_k-\st_0}(\delta)\left(\sum_{z_0= y_{\st_k}-\delta+1}^{y_{\st_0}}  \frac{|1+\varepsilon^{\frac{1}{2}}\xi|^{z_0}}{|1+\varepsilon^{\frac{1}{2}}\eta|^{z_0+1}}\right)\\
        &= \varepsilon^{\frac{1}{2}}\frac{|1-\varepsilon^{\frac{1}{2}}\eta|^{\st_k}}{|1+\varepsilon^{\frac{1}{2}}\eta|^{\st_k}}\cdot \sum_{\delta = y_{\st_k}-y_{\st_0}+1}^{\st_0-\st_k}  |1+\varepsilon^{\frac{1}{2}}\eta|^{-\delta}p_{\st_k-\st_0}(\delta)\left( \frac{\frac{|1+\varepsilon^{\frac{1}{2}}\xi|^{y_{\st_k}-\delta+1}}{|1+\varepsilon^{\frac{1}{2}}\eta|^{y_{\st_k}-\delta+1}}- \frac{|1+\varepsilon^{\frac{1}{2}}\xi|^{y_{\st_0}+1}}{|1+\varepsilon^{\frac{1}{2}}\eta|^{y_{\st_0}+1}}}{|1+\varepsilon^{\frac{1}{2}}\eta|-|1+\varepsilon^{\frac{1}{2}}\xi|}\right).
    \end{split}
    \end{equation}
By the assumptions of $\xi$ and $\eta$, we have 
\begin{equation}
    |1-\varepsilon^{\frac{1}{2}}\eta|<|1+\varepsilon^{\frac{1}{2}}\eta|,\quad |1+\varepsilon^{\frac{1}{2}}\xi|<1<|1+\varepsilon^{\frac{1}{2}}\eta|.
\end{equation}
Hence
\begin{equation}
    \left|\frac{1-\varepsilon^{\frac{1}{2}}\eta}{1+\varepsilon^{\frac{1}{2}}\eta}\right|^{\st_k}\leq 1,\quad \left|\frac{1+\varepsilon^{\frac{1}{2}}\xi}{1+\varepsilon^{\frac{1}{2}}\eta}\right|^{y_{\st_0}+1}\leq \left|\frac{1+\varepsilon^{\frac{1}{2}}\xi}{1+\varepsilon^{\frac{1}{2}}\eta}\right|^{y_{\st_k}-\delta+1}, \quad \text{for all }y_{\st_k}-y_{\st_0}+1\leq \delta, 
\end{equation}
where we are using the fact that $\st_k\geq 0$ and $y_{\st_0}\geq y_{\st_k}-\delta+1$ for all $y_{\st_k}-y_{\st_0}+1\leq \delta$. Thus, we conclude that the right-hand side of \eqref{eq: essential_bound} is bounded above by 
\begin{equation}\label{eq: bound_epsilon}
    \frac{\varepsilon^{\frac{1}{2}}}{|1+\varepsilon^{\frac{1}{2}}\eta|-|1+\varepsilon^{\frac{1}{2}}\xi|}\cdot\frac{|1-\varepsilon^{\frac{1}{2}}\eta|^{\st_k}}{|1+\varepsilon^{\frac{1}{2}}\eta|^{\st_k}}\cdot \frac{|1+\varepsilon^{\frac{1}{2}}\xi|^{y_{\st_k}+1}}{|1+\varepsilon^{\frac{1}{2}}\eta|^{y_{\st_k}+1}}\cdot \sum_{\delta=y_{\st_k}-y_{\st_0}+1}^{\st_0-\st_k} |1+\varepsilon^{\frac{1}{2}}\xi|^{-\delta}p_{\st_k-\st_0}(\delta).
\end{equation}
The summation over $\delta$ is bounded above by
\begin{equation}
\label{eq:bound_sum_delta}
    \sum_{\delta\leq \st_0-\st_k}|1+\varepsilon^{\frac{1}{2}}\xi|^{-\delta}p_{\st_k-\st_0}(\delta) = \left(\frac{|1+\varepsilon^{\frac{1}{2}}\xi|}{2-|1+\varepsilon^{\frac{1}{2}}\xi|}\right)^{\st_k-\st_0},
\end{equation}
which follows from a standard moment generating function computation for the geometric random walk. Finally, note that $|1+\varepsilon^{\frac{1}{2}}\eta|\geq 1+\varepsilon^{\frac{1}{2}}\mathrm{Re(\eta)}>1$ for  $\eta\in\complexC_\RR$ and $|1+\varepsilon^{\frac{1}{2}}\xi|<1$ by our assumption, we have
\begin{equation}\label{eq: difference_bound}
    \frac{\varepsilon^{\frac{1}{2}}}{|1+\varepsilon^{\frac{1}{2}}\eta|-|1+\varepsilon^{\frac{1}{2}}\xi|} \leq  \frac{\varepsilon^{\frac{1}{2}}}{1+\varepsilon^{\frac{1}{2}}\mathrm{Re}(\eta)-1} = \frac{1}{\mathrm{Re}(\eta)}.
\end{equation}
By using the bound from \eqref{eq: difference_bound} in \eqref{eq: bound_epsilon} and summing over $k$, we arrive at the desired estimate \eqref{eq: uniformbound}.

For \eqref{eq:uniform_bound_quadratic}, we note the following simple inequality
\begin{equation}
    C_1|z|< \log(|1+z|) <C_2|z|,\quad \text{ for all $z$ satisfying } |z|<100^{-1}, 
\end{equation}
for some constants $C_1$ and $C_2$ that are independent of $z$. Therefore by our assumption,
\begin{equation}
    \frac{|1-\varepsilon \eta^2|^{\st_{M-1}}}{|1+\varepsilon^{\frac{1}{2}}\eta|^{2\st_{M-1}+y_{\st_{M-1}+1}}} \le \mathrm{e}^{C_2|\eta^2| \varepsilon\st_{M-1} -(|C_1|+|C_2|)|\eta||\varepsilon^{1/2}(2\st_{M-1}+y_{\st_{M-1}+1})|}\le \mathrm{e}^{C(|\eta^2|+|\eta|+1)},
\end{equation}
for some large constant $C$ by using \eqref{eq:scaling_st_yst}. For the other factor, we note that 
\begin{equation}
    x(2-x) \ge 1-c^2 \quad \text{ when } 1-c<x<1+c \text{ and } 0<c<1.
\end{equation}
Therefore
\begin{equation}
    |1+\varepsilon^{\frac12}\xi| (2- |1+\varepsilon^{\frac12}\xi|) \ge 1- \varepsilon |\xi^2|,
\end{equation}
and
\begin{equation}
    (|1+\varepsilon^{\frac12}\xi| (2- |1+\varepsilon^{\frac12}\xi|))^{-\st_{M-1}+1}
    \leq (1- \varepsilon |\xi^2|)^{-\st_{M-1}+1} \leq^{C_1|\xi^2|\varepsilon(-\st_{M-1}+1)} \le\mathrm{e}^{C(|\xi^2|+1)},
\end{equation}
for some constant $C$ by using \eqref{eq:scaling_st_yst}. Finally,
\begin{equation}
    |1+\varepsilon^{1/2}\xi|^{2\st_{M-1}+y_{\st_{M-1}-1}} \le\mathrm{e}^{(|C1|+|C_2|)|\xi|\cdot \varepsilon^{1/2}|2\st_{M-1}+y_{\st_{M-1}-1}|} \le \mathrm{e}^{C(|\xi|+1)},
\end{equation}
for some constant $C$ by using \eqref{eq:scaling_st_yst}. Combining the above estimates, we obtain \eqref{eq:uniform_bound_quadratic}.
 
    \bigskip

        (a) Now we prove part (a). We start with rewriting $\chi_{\h}(\eta,\xi)$ under the assumption that $\h\in \mnw_0$, recall the definition of $\chi_{\h}(\eta,\xi)$ from \eqref{eq: limess}. For $\mathrm{supp}(\h)=\{\nXX_0,\ldots,\nXX_{M-1}\}$ with $0=\nXX_0>\cdots>\nXX_{M-1}$, one has $\mathbb{P}(\ta_+\neq 0)=0$ where $\ta_+$ is defined in \eqref{eq: hittingtimes}. Thus, it is easy to check that the first and third term on the right-hand side of \eqref{eq: limess} are both equal to $\frac{\mathrm{e}^{\h(0)(\eta-\xi)}}{\eta-\xi}$ and they cancel each other. On the other hand, since $\mathbb{P}(\ta_-\notin \{\nXX_0,\ldots,\nXX_{M-1}\})=0$, the second term on the right-hand side of \eqref{eq: limess} is given by 
        \begin{equation}
        \label{eq:chi_integral}
            \sum_{k=0}^{M-1}\int_{\substack{\rs_i\geq \nHH_i, 0\leq i\leq k-1;\\ \rs_k<\nHH_k}}\mathrm{e}^{\rs_k\eta+\nXX_k\eta^2-\rs_0\xi}\cdot\prod_{i=0}^{k-1}\mathrm{p}_{\nXX_{i}-\nXX_{i+1}}(\rs_{i+1}-\rs_{i})\diff \rs_0\cdots\diff \rs_{k},
        \end{equation}
        where $\mathrm{p}_{\nXX-\nXX'}(\rs-\rs') = \frac{1}{\sqrt{4\pi(\nXX-\nXX')}}\mathrm{e}^{-\frac{(\rs-\rs')^2}{4(\nXX-\nXX')}}$ 
        is the transition density of a Brownian motion with diffusivity constant $2$. Thus for $\h\in \mnw_0$, we have 
                \begin{equation}
        \begin{aligned}
            \limess_{\h}(\eta,\xi) = \sum_{k=0}^{M-1}\int_{\substack{\rs_i\geq \nHH_i, 0\leq i\leq k-1;\\ \rs_k<\nHH_k}}\mathrm{e}^{\rs_k\eta+\nXX_k\eta^2-\rs_0\xi}\cdot\prod_{i=0}^{k-1}\mathrm{p}_{\nXX_{i}-\nXX_{i+1}}(\mathrm{s}_{i+1}-\rs_{i})\diff \rs_0\cdots\diff \rs_{k}.
        \end{aligned}
    \end{equation}                  
    Now we fix $\xi\in \mathbb{C}_\LL, \eta\in \mathbb{C}_\RR$, and consider \eqref{eq: rescaled_kernel}. Use the following scaling and recall that $\nXX_0=\nHH_0=0$,
    \begin{equation}
    \label{eq: scaling}
        \st_i:=-\lfloor\varepsilon^{-1}\nXX_i\rfloor-\lfloor\frac12\varepsilon^{-\frac{1}{2}}\nHH_i\rfloor+1,\quad z_i:=2\lfloor\varepsilon^{-1}\nXX_i\rfloor-\lfloor\varepsilon^{-\frac{1}{2}}(\mathrm{s}_i-\nHH_i)\rfloor ,\quad y_{\st_i}:=2\lfloor\varepsilon^{-1}\nXX_i\rfloor.
    \end{equation}  We write \eqref{eq: rescaled_kernel} as a multiple Riemann sum
    \begin{equation}
    \label{eq:riemann_sum}
        \begin{split}
            &\frac{1}{2}\varepsilon^{\frac{1}{2}}\cdot\Kess_{Y^{\varepsilon}}\left(-\frac{1}{2}+\frac{1}{2}\varepsilon^{\frac{1}{2}}\eta,-\frac{1}{2}+\frac{1}{2}\varepsilon^{\frac{1}{2}}\xi\right)\\
            &=\sum_{k=0}^{M-1}\int_{\substack{\rs_i\geq \nHH_i, 0\leq i\leq k-1;\\ \rs_k<\nHH_k}} \frac{(1+\varepsilon^{\frac{1}{2}}\xi)^{z_0}}{(1+\varepsilon^{\frac{1}{2}}\eta)^{z_k+2\st_k+1}}\cdot (1-\varepsilon\eta^2)^{\st_k}\cdot \prod_{i=0}^{k-1} \varepsilon^{-\frac12}p_{\st_{i+1}-\st_{i}}(z_{i+1}-z_{i})
            \diff \rs_0\cdots\diff \rs_{k}.
        \end{split}
    \end{equation}
    Note that when $\rs_0,\ldots,\rs_k$ are all fixed, the factors in the integrand all converge as $\varepsilon\to 0$:
    \begin{equation}
      \begin{split}
          (1+\varepsilon^{\frac{1}{2}}\xi)^{z_0}&\to \mathrm{e}^{-\rs_0\xi},\\
          (1+\varepsilon^{\frac{1}{2}}\eta)^{z_k+2\st_k+1}&\to \mathrm{e}^{-\rs_k\eta},\\
          (1-\varepsilon\eta^2)^{\st_k}&\to \mathrm{e}^{\nXX_k \eta^2},\\
          \varepsilon^{-\frac12}p_{\st_{i+1}-\st_{i}}(z_{i+1}-z_{i}) &\to \mathrm{p}_{\nXX_{i}-\nXX_{i+1}}(\rs_{i+1}-\rs_{i}),
      \end{split}  
    \end{equation}
    where the last convergence follows from the local central limit theorem or a direct computation using the formulas. Thus, we formally obtain that the limit of \eqref{eq:riemann_sum} is equal to \eqref{eq:chi_integral}, therefore \eqref{eq:pointwise_convergence_chi} follows.

    In order to rigorously show the above convergence, we need to show that \eqref{eq:riemann_sum} is uniformly bounded and the dominated convergence theorem applies. Note that the right-hand side of \eqref{eq:riemann_sum} is the same as that of \eqref{eq: rescaled_kernel_uniform_bound}. Therefore, \eqref{eq:uniform_bound_quadratic}  gives a uniform bound for \eqref{eq:riemann_sum}. This completes the proof.
\end{proof}

    \subsection{Convergence of the series expansion}
    In this section we prove Lemma \ref{lm: convergence_term} and \ref{lm: bound_term}. We will make the additional assumption that $0<\tau_1<\cdots<\tau_m$ to make the presentation lighter. The convergence results and arguments in this section still work if $\tau_i=\tau_{i+1}$ for some $i$ but the contours need to be chosen carefully to make sure that the integrand has the desired super-exponential decay. Alternatively, one can directly work with the limiting KPZ fixed point formula  which is continuous with respect to the limit $\tau_{i+1}\to \tau_i$ and our choices of the angles in Figure \ref{fig:contours_limit} guarantees the convergence of the formula \eqref{eq: Dn_lim} even when some time parameters are equal.  
    
    We will deform the $u,v$ contours so that locally near the critical point $-\frac{1}{2}$, they look like the limiting contours for $\xi,\eta$. More concretely, let $\Gamma_\LL$ be a contour in the left half-plane going from $\infty \mathrm{e}^{-2\pi\mathrm{i}/3}$ to $\infty\mathrm{e}^{2\pi\mathrm{i}/3}$ and $\Gamma_\RR$ be a contour in the right half-plane going from $\infty \mathrm{e}^{-\pi\mathrm{i}/5}$ to $\infty\mathrm{e}^{\pi\mathrm{i}/5}$ (see Figure \ref{fig:contours_limit}).
    For each $\varepsilon>0$, we deform the $u$-contour $\Sigma_L$ and $v$-contour $\Sigma_\RR$ (see Figure \ref{fig:contours_finite_time}) so that the corresponding contours $\Gamma^\varepsilon_\LL$ and $\Gamma_\RR^\varepsilon$ for the rescaled variables $\xi:= 2\varepsilon^{-\frac{1}{2}}\left(u+\frac{1}{2}\right)$ and $\eta:= 2\varepsilon^{-\frac{1}{2}}\left(v+\frac{1}{2}\right)$ satisfy     
    \begin{equation}\label{eq: split_contours}
        \begin{aligned}
        \widehat{\Sigma}_\LL\subset \widehat{\Omega}^\varepsilon_\LL,\quad \Gamma_\LL^\varepsilon\cap\{\zeta\in \mathbb{C}: |\zeta|\leq \frac{1}{100}\varepsilon^{-\frac{1}{2}}\} =\Gamma_\LL \cap\{\zeta\in \mathbb{C}: |\zeta|\leq \frac{1}{100}\varepsilon^{-\frac{1}{2}}\},\\
        \widehat{\Sigma}_\RR\subset \widehat{\Omega}_{\RR}^\varepsilon,\quad \Gamma^\varepsilon_\RR\cap\{\zeta\in \mathbb{C}: |\zeta|\leq \frac{1}{100}\varepsilon^{-\frac{1}{2}}\} =\Gamma_\RR \cap\{\zeta\in \mathbb{C}: |\zeta|\leq \frac{1}{100}\varepsilon^{-\frac{1}{2}}\},
        \end{aligned}
    \end{equation}
    where 
\begin{equation}
    \widehat{\Omega}_\LL^\varepsilon := \{-\varepsilon^{-\frac{1}{2}}+ \varepsilon^{-\frac{1}{2}}z: |z|<1 \},\quad\widehat{\Omega}_\RR^\varepsilon := \{\varepsilon^{-\frac{1}{2}}+ \varepsilon^{-\frac{1}{2}}z: |z|<1 \}.
\end{equation}
See Figure \ref{fig:contours_deformed} for an illustration of the deformed contours. 
 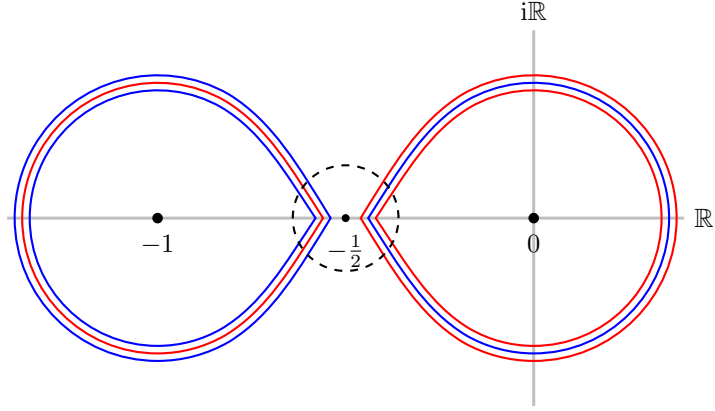
\begin{figure}[t]
 	\centering
 	\begin{tikzpicture}[scale=1]
 	\draw [line width=0.4mm,lightgray] (-2,0)--(7,0) node [pos=1,right,black] {$\realR$};
 	\draw [line width=0.4mm,lightgray] (5,-2.5)--(5,2.5) node [pos=1,above,black] {$\mathrm{i}\realR$};
 	\fill (2.5,0) circle[radius=1.5pt] node [below,shift={(0pt,-3pt)}] {$-\frac{1}{2}$};
    \fill (5,0) circle[radius=2pt] node [below,shift={(0pt,-3pt)}] {$0$};
    \fill (0,0) circle[radius=2pt] node [below,shift={(0pt,-3pt)}] {$-1$};
 \draw[red,thick] 
     (180:1.8cm) 
     arc (180:300:1.8cm)  
    .. controls (320:1.9cm) and (340:1.9cm) .. (0:2.2cm) 
    .. controls (20:1.9cm) and (40:1.9cm) .. (60:1.8cm)
    arc (60:180:1.8cm);
    \draw[blue, thick] 
     (180:1.9cm) 
     arc (180:300:1.9cm)  
    .. controls (320:2cm) and (340:2cm) .. (0:2.3cm) 
    .. controls (20:2cm) and (40:2cm) .. (60:1.9cm)
    arc (60:180:1.9cm);
    \draw[blue, thick] 
     (180:1.7cm) 
     arc (180:300:1.7cm)  
    .. controls (320:1.8cm) and (340:1.8cm) .. (0:2.1cm) 
    .. controls (20:1.8cm) and (40:1.8cm) .. (60:1.7cm)
    arc (60:180:1.7cm);
    
    \begin{scope}[transform canvas={xscale=-1, xshift=-5cm}]
         \draw[blue,thick] 
     (180:1.8cm) 
     arc (180:300:1.8cm)  
    .. controls (320:1.9cm) and (340:1.9cm) .. (0:2.2cm) 
    .. controls (20:1.9cm) and (40:1.9cm) .. (60:1.8cm)
    arc (60:180:1.8cm);
    \draw[red, thick] 
     (180:1.9cm) 
     arc (180:300:1.9cm)  
    .. controls (320:2cm) and (340:2cm) .. (0:2.3cm) 
    .. controls (20:2cm) and (40:2cm) .. (60:1.9cm)
    arc (60:180:1.9cm);
    \draw[red, thick] 
     (180:1.7cm) 
     arc (180:300:1.7cm)  
    .. controls (320:1.8cm) and (340:1.8cm) .. (0:2.1cm) 
    .. controls (20:1.8cm) and (40:1.8cm) .. (60:1.7cm)
    arc (60:180:1.7cm);
    \end{scope}
        \draw[dashed, thick] (2.5,0) circle (20pt);           
		\end{tikzpicture}
 	\caption{The deformed $u,v$-contours. Inside the dashed circle is the region $\{w\in \mathbb{C}: \left|w+\frac{1}{2}\right|<\frac{1}{200}\}$, where we deform the contours to be the same as their limiting counterparts shown in Figure \ref{fig:contours_limit}.}\label{fig:contours_deformed}
    \end{figure}
Note that 
\begin{equation*}
    u\in \Omega_\LL\iff \xi \in  \widehat{\Omega}_\LL^\varepsilon,\quad v\in \Omega_\RR\iff\eta \in  \widehat{\Omega}_\RR^\varepsilon.
\end{equation*}
    Recall the functions $f_i$ defined in \eqref{eq: f_i}. We now introduce some rescaled versions of them. For each $1\leq i\leq m$, $\zeta\in \mathbb{C}_\LL\cup \mathbb{C}_\RR$ and $\varepsilon>0$ sufficiently small, define
    \begin{equation}\label{eq: fi_scaled}
        \mathrm{f}^\varepsilon_i(\zeta): = (1-\varepsilon^{\frac{1}{2}}\zeta)^{k_i^\varepsilon-k_{i-1}^\varepsilon}(1+\varepsilon^{\frac{1}{2}}\zeta)^{(k_{i-1}^\varepsilon-k_{i}^\varepsilon)+(a_{i-1}^\varepsilon-a_{i}^\varepsilon)}\mathrm{e}^{\frac{1}{2}(t_i^\varepsilon-t_{i-1}^\varepsilon)\varepsilon^{\frac{1}{2}}\zeta}.
    \end{equation} Here 
    \begin{equation} \label{eq: parameters_scaled}
    k_\ell^\varepsilon :=\frac{1}{2}\varepsilon^{-\frac{3}{2}}\TT_\ell-\varepsilon^{-1}\XX_\ell-\frac{1}{2}\varepsilon^{-\frac{1}{2}}\HH_\ell+O(1),\quad  a^{\varepsilon}_\ell := 2 \varepsilon^{-1}\XX_\ell+O(1),\quad t^{\varepsilon}_{\ell}:= 2\varepsilon^{-\frac{3}{2}}\TT_\ell,\quad \text{for }1\leq \ell\leq m,
    \end{equation}
    with the convention that $k_0^\varepsilon=a_0^\varepsilon=t_0^\varepsilon:=0$. It is straightforward to check that 
    \begin{equation*}
        \frac{\mathrm{f}^\varepsilon_i(\xi)}{\mathrm{f}^\varepsilon_i(\eta)} = \frac{f_i\left(-\frac{1}{2}+\frac{1}{2}\varepsilon^{\frac{1}{2}}\xi\right)}{f_i\left(-\frac{1}{2}+\frac{1}{2}\varepsilon^{\frac{1}{2}}\eta\right)},\quad \forall \xi\in \mathbb{C}_\LL, \eta\in \mathbb{C}_\RR,
    \end{equation*}
    where $f_i$'s are defined in \eqref{eq: f_i} with the parameters chosen as in \eqref{eq: parameters_scaled}. We begin by stating the needed estimates and asymptotics for the functions $ \mathrm{f}^\varepsilon_i$. 
    \begin{lm} \label{lm: function_converge}
    Assume $0<\tau_1<\cdots<\tau_m$. Let $\mathrm{f}_i^\varepsilon$ be defined as in \eqref{eq: fi_scaled} for $1\leq i\leq m$. The following holds.  
        \begin{enumerate}[(a)] 
            \item For any $\zeta\in \mathbb{C}_\LL\cup\mathbb{C}_\RR$ fixed,  we have 
            \begin{equation}
                \lim_{\varepsilon\to 0} \mathrm{f}^\varepsilon_i(\zeta) = \mathrm{f}_i(\zeta)=:\exp\left(-\frac{1}{3}(\TT_i-\TT_{i-1})\zeta^3+ (\XX_i-\XX_{i-1})\zeta^2+(\HH_i-\HH_{i-1})\zeta\right),\quad 1\leq i\leq m. 
            \end{equation}
            \item There exists constants $c,C>0$ such that 
            \begin{equation}
                |\mathrm{f}_i^\varepsilon(\xi)|\leq C\mathrm{e}^{-c(\TT_i-\TT_{i-1})|\xi|^3},\, \forall \xi\in \Gamma_\LL^\varepsilon,\quad \text{and}\quad
                |\mathrm{f}_i^\varepsilon(\eta)|\geq C^{-1}\mathrm{e}^{c(\TT_i-\TT_{i-1})|\eta|^3},\, \forall \eta\in \Gamma_\RR^\varepsilon.
            \end{equation}
        \end{enumerate}
    \end{lm}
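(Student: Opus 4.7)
The plan is to derive both parts from a careful Taylor expansion of $\log(1\pm\varepsilon^{1/2}\zeta)$, combined with a steepest-descent estimate on the far part of the contour. For \textbf{part (a)}, set $w:=\varepsilon^{1/2}\zeta\to 0$ and write, with $\Delta k:=k_i^\varepsilon-k_{i-1}^\varepsilon$, $\Delta a:=a_i^\varepsilon-a_{i-1}^\varepsilon$, $\Delta t:=t_i^\varepsilon-t_{i-1}^\varepsilon$,
\[
\log\mathrm{f}_i^\varepsilon(\zeta)=\Delta k\log(1-w)-(\Delta k+\Delta a)\log(1+w)+\tfrac{1}{2}\Delta t\,w.
\]
Plugging in $\log(1\pm w)=\pm w-w^2/2\pm w^3/3+O(w^4)$ and collecting powers, the coefficient of $w^1$ is $-2\Delta k-\Delta a+\tfrac{1}{2}\Delta t$; by \eqref{eq: parameters_scaled} its leading $\varepsilon^{-3/2}$ and $\varepsilon^{-1}$ parts cancel, leaving $\varepsilon^{-1/2}(\HH_i-\HH_{i-1})+O(1)$, so multiplied by $w$ it contributes $(\HH_i-\HH_{i-1})\zeta+o(1)$. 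The $w^2$ coefficient is $\Delta a/2=\varepsilon^{-1}(\XX_i-\XX_{i-1})+O(1)$, contributing $(\XX_i-\XX_{i-1})\zeta^2+o(1)$; the $w^3$ coefficient is $-(2\Delta k+\Delta a)/3=-\tfrac{1}{3}\varepsilon^{-3/2}(\TT_i-\TT_{i-1})+O(\varepsilon^{-1/2})$, contributing $-\tfrac{1}{3}(\TT_i-\TT_{i-1})\zeta^3+o(1)$; the Taylor remainder $\Delta k\cdot O(w^4)=O(\varepsilon^{1/2}\zeta^4)$ vanishes for fixed $\zeta$. Exponentiating yields~(a).

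For \textbf{part (b)} I would split $\Gamma_\LL^\varepsilon$ at $|\xi|=\delta\varepsilon^{-1/2}$ for a small $\delta>0$. In the \emph{local regime} $|\xi|\le\delta\varepsilon^{-1/2}$ the expansion from (a) is uniformly valid with remainder $O(\varepsilon^{1/2}|\xi|^4)\le\delta|\xi|^3$. Since this portion of $\Gamma_\LL^\varepsilon$ coincides with $\Gamma_\LL$ by \eqref{eq: split_contours}, whose opening angles $\pm 2\pi/3$ give $\mathrm{Re}(-\xi^3/3)\le -c_0|\xi|^3$ for some $c_0>0$, we obtain
\[
\log|\mathrm{f}_i^\varepsilon(\xi)|\le -(c_0(\TT_i-\TT_{i-1})-\delta)|\xi|^3+C|\xi|^2+C|\xi|,
\]
which absorbs into $-c(\TT_i-\TT_{i-1})|\xi|^3+C'$ after taking $\delta$ small, giving the bound for any $|\xi|$ in this range. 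The bound for $\Gamma_\RR^\varepsilon$ is symmetric since the angles $\pm\pi/5$ on $\Gamma_\RR$ give $\mathrm{Re}(-\eta^3/3)\ge c_0|\eta|^3$, so $|\mathrm{f}_i^\varepsilon(\eta)|^{-1}\le Ce^{-c(\TT_i-\TT_{i-1})|\eta|^3}$.

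The \textbf{main obstacle} is the \emph{far regime} $|\xi|\ge\delta\varepsilon^{-1/2}$, where $|\xi|\sim\varepsilon^{-1/2}$ and the target bound is of order $e^{-c(\TT_i-\TT_{i-1})\varepsilon^{-3/2}}$. Parametrizing $\varepsilon^{1/2}\xi=-1+re^{i\phi}$ (so $r=2|u+1|\in(0,1)$ as $u$ traces $\Sigma_\LL$), a direct substitution gives
\[
\log|\mathrm{f}_i^\varepsilon(\xi)|=\varepsilon^{-3/2}(\TT_i-\TT_{i-1})\,G(r,\phi)+O(\varepsilon^{-1}),\qquad G(r,\phi):=\tfrac{1}{2}\log\tfrac{|2-re^{i\phi}|}{r}-1+r\cos\phi.
\]
Here $G$ is the real part of $g(\zeta):=\tfrac{1}{2}\log(2-\zeta)-\tfrac{1}{2}\log\zeta+\zeta-1$, whose only critical point is a double root at $\zeta=1$, yielding the Taylor expansion $G(1+se^{i\theta})=-\tfrac{1}{3}s^3\cos(3\theta)+O(s^5)$; thus $G<0$ strictly along the limiting directions $\theta=\pm 2\pi/3$, and extends to a continuous descent contour in $\{0<r<1\}$ on which $G\le -c_1<0$ uniformly outside the local ball. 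The technical step is to construct $\Sigma_\LL$ so that its image $\Gamma_\LL^\varepsilon$ inherits this uniform negativity in the far region; this is a standard (though delicate) saddle-point contour choice of the kind used in \cite{Liu2022,LiuZhang25}. Given such a contour, $|\mathrm{f}_i^\varepsilon(\xi)|\le e^{-c_1(\TT_i-\TT_{i-1})\varepsilon^{-3/2}/2}\le Ce^{-c(\TT_i-\TT_{i-1})|\xi|^3}$ using $|\xi|\le 2\varepsilon^{-1/2}$; the analysis on $\Gamma_\RR^\varepsilon$ is completely parallel with the role of $u,\Sigma_\LL,\Omega_\LL$ played by $v,\Sigma_\RR,\Omega_\RR$ and with the opposite sign of the cubic.
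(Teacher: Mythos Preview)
Your proposal is correct and follows essentially the same strategy as the paper's proof: Taylor-expand $\log(1\pm\varepsilon^{1/2}\zeta)$ for part (a), then for part (b) split the contour at $|\zeta|\sim\varepsilon^{-1/2}$, use the expansion with controlled remainder in the local regime, and a steepest-descent bound in the far regime. The only cosmetic difference is that the paper regroups the exponent as $\varepsilon^{-3/2}\TT\, g_3(\zeta)+\varepsilon^{-1}\XX\, g_2(\zeta)+\varepsilon^{-1/2}\HH\, g_1(\zeta)$ (by parameter rather than by powers of $w$), which lets it dispose of the far regime with a one-line ``it is elementary to check'' assertion on $g_1,g_2,g_3$ where you spell out the saddle-point picture for $G$ more explicitly.
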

    \begin{proof}
        To lighten the notation we temporarily denote $\TT=\TT_i-\TT_{i-1}$, $\XX=\XX_{i}-\XX_{i-1}$ and $\HH=\HH_i-\HH_{i-1}$. Write
        \begin{align*}
            \mathrm{f}_i^\varepsilon(\zeta) = \exp\left(\varepsilon^{-\frac{3}{2}}\TT\cdot g_3(\zeta) +\varepsilon^{-1}\XX\cdot g_2(\zeta)+\varepsilon^{-\frac{1}{2}}\HH\cdot g_1(\zeta) \right), 
        \end{align*}
        where 
        \begin{equation}\label{eq: split_exponent}
        \begin{aligned}
            &g_1(\zeta):= -\frac{1}{2}\log(1-\varepsilon^{\frac{1}{2}}\zeta)+ \frac{1}{2}\log(1+\varepsilon^{\frac{1}{2}}\zeta),\\
            &g_2(\zeta):= -\log(1-\varepsilon^{\frac{1}{2}}\zeta)-\log(1+\varepsilon^{\frac{1}{2}}\zeta),\\
            &g_3(\zeta):=\frac{1}{2}\log(1-\varepsilon^{\frac{1}{2}}\zeta)- \frac{1}{2}\log(1+\varepsilon^{\frac{1}{2}}\zeta)+\varepsilon^{\frac{1}{2}}\zeta.
        \end{aligned}
        \end{equation}
        Part (a) follows from a straightforward Taylor expansion of \eqref{eq: split_exponent}.  For part (b) we will assume $\zeta=\eta\in \Gamma_\RR^\varepsilon$, the other case is similar. We split into two cases depending on whether $|\eta|\leq \frac{1}{100}\varepsilon^{-\frac{1}{2}}$ or not. Using the elementary bound 
        \begin{equation*}
            \left|\log(1-z)+\sum_{k=1}^{n-1} \frac{z^k}{k}\right|\leq \frac{|z|^n}{n(1-|z|)},\quad \forall |z|<1,
        \end{equation*}
        we have for $|\eta|\leq \frac{1}{100}\varepsilon^{-\frac{1}{2}}$:
        \begin{equation*}
            \left|g_3(\eta)+\frac{1}{3}\varepsilon^{\frac{3}{2}}\eta^3\right|\leq \varepsilon^2|\eta|^4\leq \frac{\varepsilon^{\frac{3}{2}}}{100}|\eta|^3.
        \end{equation*}
        Thus, 
        \begin{equation*}
            \mathrm{Re}\left(g_3(\eta)\right)\geq \mathrm{Re}\left(-\frac{1}{3}\varepsilon^{\frac{3}{2}}\eta^3\right) -\left|g_3(\eta)+\frac{1}{3}\varepsilon^{\frac{3}{2}}\eta^3\right|\geq -\frac{1}{3}\varepsilon^{\frac{3}{2}}\mathrm{Re}(\eta^3) - \frac{1}{100}\varepsilon^{\frac{3}{2}}|\eta|^3\geq c'\varepsilon^{\frac{3}{2}}|\eta|^3,
        \end{equation*}
        for some $c'>0$, due to our choice of the contour $\Gamma_\RR$. Similar argument shows that 
        \begin{equation*}
            |g_2(\eta)|\leq C'\varepsilon|\eta|^2,\quad |g_1(\eta)|\leq C'\varepsilon^{\frac{1}{2}}|\eta|,
        \end{equation*}
        for some $C'>0$. Thus for $|\eta|\leq \frac{1}{100}\varepsilon^{-\frac{1}{2}}$, we have 
        \begin{equation*}
            \begin{aligned}
            |\mathrm{f}_i^\varepsilon(\eta)|&= \exp\left(\varepsilon^{-\frac{3}{2}}\TT\cdot \mathrm{Re}(g_3(\eta)) +\varepsilon^{-1}\XX\cdot \mathrm{Re}(g_2(\eta))+\varepsilon^{-\frac{1}{2}}\HH\cdot \mathrm{Re}(g_3(\eta))\right)\\
            &\geq \exp(c\TT |\eta|^3-C'|\XX||\eta|^2-C'|\HH||\eta|)\geq C\exp(c'\tt |\eta|^3),
            \end{aligned}
        \end{equation*}
        for some constants $c,c',C, C'>0$. On the other hand, it is elementary to check that for $\eta\in \widehat{\Omega}_\RR^\varepsilon\backslash\{|\eta|\leq \frac{1}{100}\varepsilon^{-\frac{1}{2}}\}$, we have 
        \begin{equation*}
            \mathrm{Re}(g_3(\eta))\geq c_3>0,\quad |g_2(\eta))|\leq C_2,\quad |g_1(\eta)|\leq C_1.
        \end{equation*}
        Thus, for such $\eta$ we have 
        \begin{equation*}
            |\mathrm{f}_i^\varepsilon(\eta)|\geq \exp(c_3\varepsilon^{-\frac{3}{2}}-C_2\varepsilon^{-1}-C_1\varepsilon^{-\frac{1}{2}})\geq \exp(c_3'\varepsilon^{-\frac{3}{2}})\geq \exp(c_3'|\eta|^3/8),
        \end{equation*}
        since in this region $\frac{1}{100}\varepsilon^{-\frac{1}{2}}\leq |\eta|\leq 2\varepsilon^{-\frac{1}{2}}$. This completes the proof of part (b) and the lemma.
    \end{proof}

    \subsubsection{Proof of Lemma \ref{lm: convergence_term}}\label{sec: proof_convergence}
        Introduce the change of variables 
        \begin{equation}\label{eq: changevariable}
            u_{i_\ell}^{(\ell)} = -\frac{1}{2}+\frac{1}{2}\varepsilon^{\frac{1}{2}}\xi_{i_\ell}^{(\ell)},\quad v_{i_\ell}^{(\ell)} = -\frac{1}{2}+\frac{1}{2}\varepsilon^{\frac{1}{2}}\eta_{i_\ell}^{(\ell)},
        \end{equation}  
        for $1\leq \ell\leq m$ and $1\leq i_\ell\leq n_\ell$. It is easy to check that under this change of variables we have 
        \begin{align}
            &\mathrm{C}\left(U^{(\ell)}\sqcup V^{(\ell+1)}; V^{(\ell)}\sqcup U^{(\ell+1)}\right) = \left(\frac{2}{\varepsilon}\right)^{\frac{n_{\ell}+n_{\ell+1}}{2}} \mathrm{C}\left(\xib^{(\ell)}\sqcup \etab^{(\ell+1)}; \etab^{(\ell)}\sqcup \xib^{(\ell+1)}\right),\\
            &\mathrm{C}\left(U^{(m)}; V^{(m)}\right) = \left(\frac{2}{\varepsilon}\right)^{\frac{n_{m}}{2}} \mathrm{C}\left(\xib^{(m)}; \etab^{(m)}\right),
        \end{align}
        for $1\leq \ell\leq m-1$. As in \eqref{eq: split_contours}, we split the $\xi,\eta$ contours depending on whether they lie in the region $\Omega_0^\varepsilon:= \{w\in \mathbb{C}: |w|\leq \frac{1}{100}\varepsilon^{-\frac{1}{2}}\}$ or not, and deform the contours so that $\Gamma^\varepsilon_{\LL/\RR}$ agree with $\Gamma_{\LL/\RR}$ inside $\Omega_0^\varepsilon$. Recall the definition of $\mathcal{D}_{Y^\varepsilon}^{(\mathbf{n})}$ in \eqref{eq:D_nY} and $\mathrm{D}_{\h}^{(\mathbf{n})}$ in \eqref{eq: Dn_lim}. Let $A$ be the event that all the variables $\xi_{i_\ell}^{(\ell)}, \eta_{i_\ell}^{(\ell)}$ lie inside $\Omega_0^\varepsilon$ for $1\leq \ell\leq m$ and $1\leq i_\ell\leq n_\ell$, and $A^c$ be the complement. Define
        \begin{equation}\label{eq: DnY_main}
            \begin{split}
            \mathcal{D}_{Y^\varepsilon}^{(\mathbf{n}, \mathrm{main})}&:= \prod_{\ell=1}^{m-1} \left(1-z_\ell\right)^{n_\ell}\left(1-z_\ell^{-1}\right)^{n_{\ell+1}}\left(\prod_{\ell=1}^m\prod_{i_\ell=1}^{n_\ell} \int_{\Gamma_{\ell,\LL}}\mathrm{d}\mu_{\boldsymbol{z}}(\xi_{i_\ell}^{(\ell)})\int_{\Gamma_{\ell,\RR}}\mathrm{d}\mu_{\boldsymbol{z}}(\eta_{i_\ell}^{(\ell)})\right) \mathbf{1}_A\\
            &\hspace{0.5cm}\left(\prod_{\ell=1}^m\prod_{i_\ell=1}^{n_\ell} \frac{\mathrm{f}^\varepsilon_\ell(\xi_{i_\ell}^{(\ell)})}{ \mathrm{f}^\varepsilon_\ell(\eta_{i_\ell}^{(\ell)})}\right)\cdot \det\begin{bmatrix}
	    \frac{1}{2}\varepsilon^{\frac{1}{2}}\Kess_{Y^\varepsilon}\left(-\frac{1}{2}+\frac{1}{2}\varepsilon^{\frac{1}{2}}\eta^{(1)}_j,-\frac{1}{2}+\frac{1}{2}\varepsilon^{\frac{1}{2}}\xi^{(1)}_i\right)
	\end{bmatrix}_{1\leq i,j\leq n_1}\\
    &\quad \cdot \prod_{\ell=1}^{m-1}\mathrm{C}\left(\xib^{(\ell)}\sqcup \etab^{(\ell+1)}; \etab^{(\ell)}\sqcup \xib^{(\ell+1)}\right)\cdot \mathrm{C}(\xib^{(m)};\etab^{(m)}),
            \end{split}
        \end{equation}
        and $\mathcal{D}_{Y^\varepsilon}^{(\mathbf{n}, \mathrm{error})}:= \mathcal{D}_{Y^\varepsilon}^{(\mathbf{n})}-\mathcal{D}_{Y^\varepsilon}^{(\mathbf{n}, \mathrm{main})}$. Note that 
        \begin{equation}\label{eq: DnY_error}
            \begin{aligned}
                \mathcal{D}_{Y^\varepsilon}^{(\mathbf{n}, \mathrm{error})}&= \prod_{\ell=1}^{m-1} \left(1-z_\ell\right)^{n_\ell}\left(1-z_\ell^{-1}\right)^{n_{\ell+1}}\left(\prod_{\ell=1}^m\prod_{i_\ell=1}^{n_\ell} \int_{\Gamma^\varepsilon_{\ell,\LL}}\mathrm{d}\mu_{\boldsymbol{z}}(\xi_{i_\ell}^{(\ell)})\int_{\Gamma^\varepsilon_{\ell,\RR}}\mathrm{d}\mu_{\boldsymbol{z}}(\eta_{i_\ell}^{(\ell)})\right)\mathbf{1}_{A^c}\\         &\hspace{0.5cm}\left(\prod_{\ell=1}^m\prod_{i_\ell=1}^{n_\ell} \frac{\mathrm{f}^\varepsilon_\ell(\xi_{i_\ell}^{(\ell)})}{ \mathrm{f}^\varepsilon_\ell(\eta_{i_\ell}^{(\ell)})}\right)\cdot\det\begin{bmatrix}
	    \frac{1}{2}\varepsilon^{\frac{1}{2}}\Kess_{Y^\varepsilon}\left(-\frac{1}{2}+\frac{1}{2}\varepsilon^{\frac{1}{2}}\eta^{(1)}_j,-\frac{1}{2}+\frac{1}{2}\varepsilon^{\frac{1}{2}}\xi^{(1)}_i\right)
	\end{bmatrix}_{1\leq i,j\leq n_1}\\
    &\quad \cdot \prod_{\ell=1}^{m-1}\mathrm{C}\left(\xib^{(\ell)}\sqcup \etab^{(\ell+1)}; \etab^{(\ell)}\sqcup \xib^{(\ell+1)}\right)\cdot \mathrm{C}(\xib^{(m)};\etab^{(m)}).
            \end{aligned}
        \end{equation}
        We claim that:
        \begin{equation}
            \lim_{\varepsilon\to 0}\mathcal{D}_{Y^\varepsilon}^{(\mathbf{n},\mathrm{main})} = \mathrm{D}_{\h}^{(\mathbf{n})},\quad \lim_{\varepsilon\to 0}\mathcal{D}_{Y^\varepsilon}^{(\mathbf{n},\mathrm{error})} = 0.
        \end{equation}
        For the first part of the claim, note that the integrand on the right-hand side of \eqref{eq: DnY_main} converges pointwise to the integrand on the right-hand side of  \eqref{eq: Dn_lim} by Lemma \ref{lm: function_converge}(a) and Proposition \ref{prop: convergence_nw}(a). On the other hand, the cubic exponential decay bound for $|\mathrm{f}_i^\varepsilon|$ from Lemma \ref{lm: function_converge}(b) and the quadratic exponential growth estimate \eqref{eq:uniform_bound_quadratic} implies that the integrand on the right-hand side of \eqref{eq: DnY_main} has a cubic exponential decay in every variable $\eta_{i_\ell}^{(\ell)},\xi_{i_\ell}^{(\ell)}$ as they go to $\infty$, uniform in $\varepsilon$. Thus, the dominated convergence theorem applies, and the first claim is proved. 

        For the second part of the claim, note that on $A^c$, at least one of the variables, say $\xi_{1}^{(1)}$, lies outside $\widehat{\Omega}_0^\varepsilon$. Then Lemma \ref{lm: function_converge}(b) implies that $|\mathrm{f}_1^{\varepsilon}(\xi_1^{(1)})|\leq C\exp(-c\varepsilon^{-\frac{3}{2}})$ for all $\xi_1^{(1)}\in {\Gamma}_{1,\LL}^\varepsilon\backslash \widehat{\Omega}_0^\varepsilon$.  On the other hand, Proposition \ref{prop: convergence_nw}(b) implies that 
        \begin{equation}
            \left|\frac{1}{2}\varepsilon^{\frac{1}{2}}\Kess_{Y^{\varepsilon}}\left(-\frac{1}{2}+\frac{1}{2}\varepsilon^{\frac{1}{2}}\eta_j^{(1)},-\frac{1}{2}+\frac{1}{2}\varepsilon^{\frac{1}{2}}\xi_1^{(1)}\right)\right|\leq C\exp(c\varepsilon^{-1}),
        \end{equation}
    for some $c,C>0$ and all $1\leq j\leq n_1$. The other parts of the integrand remain bounded. Thus,
    \begin{equation}
        |\mathcal{D}_{Y^\varepsilon}^{(\mathbf{n},\mathrm{error})}|\leq C\varepsilon^{-(n_1+\cdots+n_m)}\cdot \exp(-c\varepsilon^{-\frac{3}{2}})\to 0,\quad \text{as }\varepsilon\to 0.
    \end{equation}
This proves the second part of the claim, and Lemma \ref{lm: convergence_term} follows.

    \subsubsection{Proof of Lemma \ref{lm: bound_term}}\label{sec: proof_bound}
    Recall the expression \eqref{eq:D_nY} for $\mathcal{D}_{Y^\varepsilon}^{(\mathbf{n})}$. We first rewrite it slightly by writing $\mathrm{f}_i^\varepsilon (\zeta)= \mathrm{f}_i^\varepsilon (\zeta)^{\frac{1}{2}}\cdot \mathrm{f}_i^\varepsilon (\zeta)^{\frac{1}{2}}$ and putting one of the square root inside the first determinant, to get 
    \begin{equation}
            \begin{aligned}
                \mathcal{D}_{Y^\varepsilon}^{(\mathbf{n})}&= \prod_{\ell=1}^{m-1} \left(1-z_\ell\right)^{n_\ell}\left(1-z_\ell^{-1}\right)^{n_{\ell+1}}\left(\prod_{\ell=1}^m\prod_{i_\ell=1}^{n_\ell} \int_{\Gamma^\varepsilon_{\ell,\LL}}\mathrm{d}\mu_{\boldsymbol{z}}(\xi_{i_\ell}^{(\ell)})\int_{\Gamma^\varepsilon_{\ell,\RR}}\mathrm{d}\mu_{\boldsymbol{z}}(\eta_{i_\ell}^{(\ell)})\right)\prod_{\ell=2}^m\prod_{i_\ell=1}^{n_\ell} \frac{\mathrm{f}^\varepsilon_\ell(\xi_{i_\ell}^{(\ell)})}{ \mathrm{f}^\varepsilon_\ell(\eta_{i_\ell}^{(\ell)})}\\
            &\hspace{0.5cm}\left(\prod_{i_1=1}^{n_1} \frac{\mathrm{f}^\varepsilon_1(\xi_{i_1}^{(1)})^{1/2}}{ \mathrm{f}^\varepsilon_1(\eta_{i_1}^{(1)})^{1/2}}\right)\cdot\det\begin{bmatrix}
	    \frac{\mathrm{f}^\varepsilon_1(\xi_{i}^{(1)})^{1/2}}{ \mathrm{f}^\varepsilon_1(\eta_{j}^{(1)})^{1/2}}\cdot \frac{1}{2}\varepsilon^{\frac{1}{2}}\Kess_{Y^\varepsilon}\left(-\frac{1}{2}+\frac{1}{2}\varepsilon^{\frac{1}{2}}\eta^{(1)}_j,-\frac{1}{2}+\frac{1}{2}\varepsilon^{\frac{1}{2}}\xi^{(1)}_i\right)
	\end{bmatrix}_{1\leq i,j\leq n_1}\\
    &\quad \cdot \prod_{\ell=1}^{m-1}\mathrm{C}\left(\xib^{(\ell)}\sqcup \etab^{(\ell+1)}; \etab^{(\ell)}\sqcup \xib^{(\ell+1)}\right)\cdot \mathrm{C}(\xib^{(m)};\etab^{(m)}),
            \end{aligned}
        \end{equation}
    Here the choice of square root does not matter as long as we make the same choice for the two. The advantage of this rewriting is that 
    \begin{equation}
        \left|\frac{\mathrm{f}^\varepsilon_1(\xi_{i}^{(1)})^{1/2}}{ \mathrm{f}^\varepsilon_1(\eta_{j}^{(1)})^{1/2}}\cdot \frac{1}{2}\varepsilon^{\frac{1}{2}}\Kess_{Y^\varepsilon}\left(-\frac{1}{2}+\frac{1}{2}\varepsilon^{\frac{1}{2}}\eta^{(1)}_j,-\frac{1}{2}+\frac{1}{2}\varepsilon^{\frac{1}{2}}\xi^{(1)}_i\right)\right|\leq c, \quad \forall 1\leq i,j\leq n_1,
    \end{equation}
    for some constant $c>0$ independent of $\varepsilon$ and $\xib^{(1)},\etab^{(1)}$. Thus, by Hadamard's inequality 
    \begin{equation}
        \left|\det\begin{bmatrix}
	    \frac{\mathrm{f}^\varepsilon_1(\xi_{i}^{(1)})^{1/2}}{ \mathrm{f}^\varepsilon_1(\eta_{j}^{(1)})^{1/2}}\cdot \frac{1}{2}\varepsilon^{\frac{1}{2}}\Kess_{Y^\varepsilon}\left(-\frac{1}{2}+\frac{1}{2}\varepsilon^{\frac{1}{2}}\eta^{(1)}_j,-\frac{1}{2}+\frac{1}{2}\varepsilon^{\frac{1}{2}}\xi^{(1)}_i\right)
	\end{bmatrix}_{1\leq i,j\leq n_1}\right|\leq n_1^{\frac{n_1}{2}}\cdot c^{n_1}.
    \end{equation}
    The same arguments imply that 
    \begin{equation}
        |\mathrm{C}(\xib^{(m)};\etab^{(m)})|\leq n_m^{\frac{n_m}{2}}\cdot D^{n_m},
    \end{equation}
    and 
    \begin{equation}
        |\mathrm{C}\left(\xib^{(\ell)}\sqcup \etab^{(\ell+1)}; \etab^{(\ell)}\sqcup \xib^{(\ell+1)}\right)|\leq (n_\ell+n_{\ell+1})^{\frac{n_\ell+n_{\ell+1}}{2}}\cdot D^{\frac{n_\ell+n_{\ell+1}}{2}}\leq n_\ell^{\frac{n_\ell}{2}}n_{\ell+1}^{\frac{n_{\ell+1}}{2}}\cdot (2D)^{\frac{n_\ell+n_{\ell+1}}{2}},
    \end{equation}
    for $1\leq \ell\leq m-1
    $. Here $D$ is chosen to be the reciprocal of the minimal distance between different $\Gamma^\varepsilon$ contours, which can be made positive. Thus, 
    \begin{equation}
        \begin{aligned}
        |\mathcal{D}_{Y^\varepsilon}^{(\mathbf{n})}|&\leq \prod_{\ell=1}^{m-1} \frac{|1-z_\ell|^{n_\ell+n_{\ell+1}}}{|z_\ell|^{n_{\ell+1}}} \cdot (C')^{n_1+\cdots+n_m}\cdot \prod_{\ell=1}^m n_\ell^{n_\ell}\cdot \prod_{i_1=1}^{n_1} \int_{\Gamma_{1,\LL}^\varepsilon}\frac{\mathrm{d}|\xi_{i_1}^{(1)}|}{2\pi}\int_{\Gamma_{1,\RR}^\varepsilon}\frac{\mathrm{d}|\eta_{i_1}^{(1)}|}{2\pi} \frac{|\mathrm{f}^\varepsilon_1(\xi_{i_1}^{(1)})|^{\frac{1}{2}}}{ |\mathrm{f}^\varepsilon_1(\eta_{i_1}^{(1)})|^{\frac{1}{2}}}\\
        &\hspace{0.3cm}\cdot \prod_{\ell=2}^{m}\prod_{i_\ell=1}^{n_\ell} \left(\frac{1+|z_\ell|}{|1-z_\ell|}\right)^2\int_{\Gamma_{\ell,\LL}^\varepsilon}\frac{\mathrm{d}|\xi_{i_\ell}^{(\ell)}|}{2\pi}\int_{\Gamma_{\ell,\RR}^\varepsilon}\frac{\mathrm{d}|\eta_{i_\ell}^{(\ell)}|}{2\pi} \frac{|\mathrm{f}^\varepsilon_\ell(\xi_{i_\ell}^{(\ell)})|}{ |\mathrm{f}^\varepsilon_\ell(\eta_{\ell_1}^{(\ell)})|}\\
        &\leq  \prod_{\ell=1}^{m-1} \frac{(1+|z_{\ell+1}|)^{2n_{\ell+1}}}{|z_{\ell}|^{n_{\ell+1}}|1-z_\ell|^{n_{\ell+1}-n_\ell}}\cdot\prod_{\ell=1}^m n_\ell^{n_\ell}\cdot C^{n_1+\cdots+n_m},
        \end{aligned}
    \end{equation}
      for some constant $C>0$, since each of the integrals is bounded by some finite constant. This completes the proof of Lemma \ref{lm: bound_term}.

\section{From multiple narrow wedges to compactly supported initial conditions}\label{sec: compact support}
In this section we extend the multipoint formula \eqref{eq: multi_time} to any $\h\in \ucc$ using a density argument. We choose to work at the level of the KPZ fixed point formula, which enjoys more symmetry and nicer decay properties. 

\subsection{Density and Approximation} Our starting point is the following proposition, asserting that our formula is continuous with respect to the initial condition on the space $\ucc$.

\begin{prop}\label{prop: limess_convergence}
    Given $\{\h^n\}_{n\geq 1}\subset \uc$ with $\mathrm{supp}(\h^n)\subset [-L,L]$ and $\sup_{\XX\in \mathbb{R}}\h^n(\XX)\leq \beta$ for all $n\geq 1$. Assume $\h^n\to \h$ in $\uc$ and $\mathrm{supp}(\h)\subset [-L,L],\ \sup_{\XX\in \mathbb{R}}\h(\XX)\leq \beta$. Then
    \begin{equation}\label{eq: ess_converge1}
        \lim_{n\to \infty}\limess_{\h^n} (\eta,\xi) = \limess_{\h}(\eta,\xi),
    \end{equation}
    for all $\xi \in \mathbb{C}_\LL$ and $\eta\in \mathbb{C}_\RR$. Recall the kernel $\limess_{\h}$ was defined in Definition \ref{def: limess}, equation \eqref{eq: limess}. Consequently, 
    \begin{equation}\label{eq: series_converge1}
        \lim_{n\to \infty}  \prod_{\ell=1}^{m-1} \oint_0 \frac{\diff z_\ell}{2\pi\ii z_\ell(1-z_\ell)}\mathrm{D}_{\h^n}(z_1,\ldots,z_{m-1}) = \prod_{\ell=1}^{m-1} \oint_0 \frac{\diff z_\ell}{2\pi\ii z_\ell(1-z_\ell)}\mathrm{D}_{\h}(z_1,\ldots,z_{m-1}).
    \end{equation}
\end{prop}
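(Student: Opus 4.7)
The plan is to reduce everything to the pointwise convergence \eqref{eq: ess_converge1} and then bootstrap to \eqref{eq: series_converge1} via the Fredholm series expansion of Proposition \ref{prop: series} together with dominated convergence driven by the uniform bound \eqref{eq:bound_chi_h}.

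For \eqref{eq: ess_converge1}, I work directly from the definition \eqref{eq: limess}. Fix $\eta\in\mathbb{C}_\RR$ and $\xi\in\mathbb{C}_\LL$, and couple all hitting problems to a common two-sided Brownian motion $\B$. Let $\ta_\pm^n, \ta_\pm$ denote the hitting times of $\B$ to $\mathrm{hypo}(\h^n)$ and $\mathrm{hypo}(\h)$ from Definition \ref{def: limess}. The central claim is the almost sure convergence
\begin{equation*}
\bigl(\ta_\pm^n, \B(\ta_\pm^n), \mathbf{1}_{|\ta_\pm^n|<\infty}\bigr) \longrightarrow \bigl(\ta_\pm, \B(\ta_\pm), \mathbf{1}_{|\ta_\pm|<\infty}\bigr).
\end{equation*}
The $\limsup \ta_+^n \le \ta_+$ direction is immediate from local Hausdorff convergence of hypographs. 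The reverse inequality uses a standard non-tangentiality property of Brownian motion -- almost surely, $\B$ does not touch and immediately exit a fixed closed set without actually crossing it -- together with the fact that for Lebesgue a.e.\ starting point $s$, the path $s+\B(\cdot)$ has no tangential contact with $\mathrm{hypo}(\h)$. Given this convergence, I apply dominated convergence inside each of the three expectations in \eqref{eq: limess}, using the pathwise bound $|\B(\ta_\pm^n)|\leq \max_{[-L,L]}|\B|$. Integrability in $s$ is handled by the very same pointwise estimates worked out in the proof of Proposition \ref{prop:convergence_lim}, since those estimates depend only on $L,\beta,\xi,\eta$ and are manifestly uniform in $n$.

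For \eqref{eq: series_converge1}, I expand $\DD_{\h^n}$ using Proposition \ref{prop: series}. Only the $n_1\times n_1$ determinant built from $\limess_{\h^n}$ depends on $n$, and by \eqref{eq: ess_converge1} its entries converge pointwise. The uniform bound \eqref{eq:bound_chi_h} gives $|\limess_{\h^n}(\eta,\xi)|\lesssim \mathrm{e}^{(\beta+1)\Re(\eta-\xi)+2L(|\xi|^2+|\eta|^2)}$ with constants independent of $n$, while the exponentials $\widehat{\mathrm{f}}_1(\xi)/\widehat{\mathrm{f}}_1(\eta)$ decay like $\mathrm{e}^{-c\TT_1(|\xi|^3+|\eta|^3)}$ along the Airy-type contours $\Gamma_{1,\LL},\Gamma_{1,\RR}$. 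Cubic decay dominates quadratic growth, so Hadamard's inequality produces an $n$-independent integrable dominant for the integrand of $\DD^{(\mathbf{n})}_{\h^n}$ and an $n$-independent version of Lemma \ref{lm: bound_term}. Dominated convergence then lets me pass the limit inside the $\xib^{(\ell)},\etab^{(\ell)}$ integrals, inside the sum $\sum_\mathbf{n}(\mathbf{n}!)^{-2}$, and finally inside the outer $z_\ell$-contour integrals, yielding \eqref{eq: series_converge1}.

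The main obstacle is the almost sure convergence of the hitting triples. Because $\h$ is only upper semicontinuous and local Hausdorff convergence of hypographs is a rather weak topological notion, the hitting time is not automatically continuous; a careful Brownian non-tangentiality argument, in the spirit of the continuity-of-fixed-point results in \cite{matetski2021kpz}, is required. Once this is in hand, all remaining steps -- dominated convergence in the $s$-integrals, the uniform bound on $\DD^{(\mathbf{n})}_{\h^n}$, summability of the Fredholm series, and interchange with the outer contour integrals -- are routine consequences of the quantitative estimate \eqref{eq:bound_chi_h} and the cubic decay of the $\mathrm{f}_\ell$ along Airy contours.
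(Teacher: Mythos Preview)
Your proposal is correct and follows essentially the same route as the paper. The only notable difference is in how the hitting-time convergence is handled: the paper cites the weak convergence $\mathbb{P}_{\B(0)=s}(\B(\ta_n)\in\mathrm{d}b,\ta_n\in\mathrm{d}T)\to\mathbb{P}_{\B(0)=s}(\B(\ta)\in\mathrm{d}b,\ta\in\mathrm{d}T)$ directly from \cite[(B.20)]{matetski2021kpz} and then introduces a smooth cutoff $g_\beta$ to make the test function $f(T,b)=\exp(-T\xi^2-b\xi)\mathbf{1}_{T<\infty}g_\beta(b)$ bounded and continuous, whereas you propose to couple to a common Brownian motion and argue almost sure convergence of the hitting triples via non-tangentiality. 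Your pathwise argument is precisely what underlies the MQR result you allude to, so this is a matter of packaging rather than substance; the paper's cutoff trick is a slightly cleaner way to pass from weak convergence to convergence of the (a priori unbounded) exponential expectations, and spares you the task of verifying the a.e.-in-$s$ non-tangentiality statement yourself. The treatment of \eqref{eq: series_converge1} is identical in both.
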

\begin{proof}
    It suffices to show that 
    \begin{equation*}
        \lim_{n\to \infty}\int_\mathbb{R} \mathrm{e}^{s\eta}\cdot \mathbb{E}_{\mathbf{B}(0)=s} \left[\exp(-\ta_{n}\xi^2-\mathbf{B}(\ta_n)\xi)\mathbf{1}_{\ta_n<\infty}\right] = \int_\mathbb{R} \mathrm{e}^{s\eta}\cdot \mathbb{E}_{\mathbf{B}(0)=s} \left[\exp(-\ta\xi^2-\mathbf{B}(\ta)\xi)\mathbf{1}_{\ta<\infty}\right],
    \end{equation*}
    where 
    \begin{equation*}
        \ta:= \inf\{\XX\geq 0: \mathbf{B}(\XX)\leq \h(\XX)\},\quad \ta_n:= \inf\{\XX\geq 0: \mathbf{B}(\XX)\leq \h^n(\XX)\},
    \end{equation*}
    for a Brownian motion $\mathbf{B}(\XX)$ with diffusivity constant $2$. Here we have suppressed the $+$ sign in the subscript of the hitting time to lighten the notation. The convergence of the other two parts in the definition of $\limess_{\h}$ can be proved in the same way. From \cite{matetski2021kpz}[(B.20)] we know 
    \begin{equation*}
        \mathbb{P}_{\mathbf{B}(0)=s}(\mathbf{B}(\ta_n)\in \mathrm{d}b, \ta_n\in\mathrm{d}T)\to \mathbb{P}_{\mathbf{B}(0)=s}(\mathbf{B}(\ta)\in \mathrm{d}b, \ta\in\mathrm{d}T) \quad \text{weakly as } n\to \infty.
    \end{equation*}
    Take a smooth function $0\leq g_\beta\leq 1$ such that $g_\beta(x)\equiv 1$ for $x\leq \beta$ and $g_\beta(x)\equiv 0$ for $x\geq \beta+1$. Since $\h(\XX)\leq \beta$ and $\h^n(\XX)\leq \beta$ for all $\XX\in \mathbb{R}$ and $n\geq 1$, we have 
    \begin{equation*}
        \mathbf{1}_{\ta_n<\infty}(1-g_\beta(\mathbf{B}(\ta_n))) \equiv 0,\quad \mathbf{1}_{\ta<\infty}(1-g_\beta(\mathbf{B}(\ta))) \equiv 0.
    \end{equation*}
    Hence, 
    \begin{align*}
       &\mathbb{E}_{\mathbf{B}(0)=s} \left[\exp(-\ta_{n}\xi^2-\mathbf{B}(\ta_n)\xi)\mathbf{1}_{\ta_n<\infty}\right]= \mathbb{E}_{\mathbf{B}(0)=s} \left[\exp(-\ta_{n}\xi^2-\mathbf{B}(\ta_n)\xi)\mathbf{1}_{\ta_n<\infty} g_\beta(\B_{\ta_n})\right],\\
       &\mathbb{E}_{\mathbf{B}(0)=s} \left[\exp(-\ta\xi^2-\mathbf{B}(\ta)\xi)\mathbf{1}_{\ta<\infty}\right]= \mathbb{E}_{\mathbf{B}(0)=s} \left[\exp(-\ta\xi^2-\mathbf{B}(\ta)\xi)\mathbf{1}_{\ta_n<\infty} g_\beta(\B_{\ta})\right].
    \end{align*}
    By the weak convergence of $(\ta_n, \mathbf{B}(\ta_n))$ to $(\ta,\mathbf{B}(\ta))$ we know 
    \begin{equation*}
        \lim_{n\to \infty} \mathbb{E}_{\mathbf{B}(0)=s} \left[\exp(-\ta_{n}\xi^2-\mathbf{B}(\ta_n)\xi)\mathbf{1}_{\ta_n<\infty}g_\beta(\B_{\ta_n})\right] = \mathbb{E}_{\mathbf{B}(0)=s} \left[\exp(-\ta\xi^2-\mathbf{B}(\ta)\xi)\mathbf{1}_{\ta_n<\infty}g_\beta(\B_{\ta})\right],
    \end{equation*}
    since the function $f(T,b):=\exp(-T\xi^2-b\xi)\mathbf{1}_{T<\infty}g_\beta(b):[0,\infty]\times \mathbb{R}\to \mathbb{C}$ is bounded and continuous. Therefore
    \begin{equation*}
        \lim_{n\to \infty} \mathbb{E}_{\mathbf{B}(0)=s} \left[\exp(-\ta_{n}\xi^2-\mathbf{B}(\ta_n)\xi)\mathbf{1}_{\ta_n<\infty})\right] = \mathbb{E}_{\mathbf{B}(0)=s} \left[\exp(-\ta\xi^2-\mathbf{B}(\ta)\xi)\mathbf{1}_{\ta_n<\infty}\right].
    \end{equation*}
    Finally, using the bound obtained in Proposition \ref{prop:convergence_lim} and the dominated convergence theorem we have 
    \begin{equation*}
        \lim_{n\to \infty}\int_\mathbb{R} \mathrm{e}^{s\eta}\cdot \mathbb{E}_{\mathbf{B}(0)=s} \left[\exp(-\ta_{n}\xi^2-\mathbf{B}(\ta_n)\xi)\mathbf{1}_{\ta_n<\infty}\right] = \int_\mathbb{R} \mathrm{e}^{s\eta}\cdot \mathbb{E}_{\mathbf{B}(0)=s} \left[\exp(-\ta\xi^2-\mathbf{B}(\ta)\xi)\mathbf{1}_{\ta<\infty}\right].
    \end{equation*}
    This completes the proof of \eqref{eq: ess_converge1}. \eqref{eq: series_converge1} follows from the dominated convergence theorem and a similar bound as in Lemma \ref{lm: bound_term} whose proof is almost identical, and we omit it here. 
  \end{proof}  

To  extend \eqref{eq: multi_time} to all $\h\in \ucc$ using Proposition \ref{prop: limess_convergence}, we also need the following proposition, asserting that the space $\mnw$ is dense in $\ucc$.
\begin{prop}\label{prop: approximation}
    Let $\h\in \ucc$. Assume that $\mathrm{supp}(\h)\subset [-L,L]$ and $\max_{\XX\in \mathbb{R}}\h(\XX) = \beta<\infty$. Then there exists a sequence $\{\h^{(n)}\}_{n\geq 1}\subset \mathrm{NW}$, such that $\mathrm{supp}(\h)\subset [-L,L]$, $\max_{\XX\in \mathbb{R}}\h^{n}\leq \beta$ for all $n\geq 1$ and 
    \begin{equation*}
        \h^{n}\to \h\  \text{in }\uc,\quad \text{as }n\to \infty.
    \end{equation*}
\end{prop}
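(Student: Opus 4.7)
The plan is to construct $\h^{(n)}$ by partitioning $[-L,L]$ into finitely many sub-intervals and placing a narrow wedge at a maximizer of $\h$ on each sub-interval. For each $n\geq 1$, set $\delta_n:=2L/n$ and partition $[-L,L]$ into the closed sub-intervals $I_i^{(n)}:=[-L+(i-1)\delta_n,\,-L+i\delta_n]$, $i=1,\ldots,n$. Let $\mathcal{J}_n$ denote the set of indices $i$ for which $\h\not\equiv-\infty$ on $I_i^{(n)}$. Since $\h$ is upper semicontinuous and $I_i^{(n)}$ is compact, for each $i\in\mathcal{J}_n$ the supremum $M_i^{(n)}:=\sup_{x\in I_i^{(n)}}\h(x)\in(-\infty,\beta]$ is attained at some $\nXX_i^{(n)}\in I_i^{(n)}$. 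I define $\h^{(n)}$ to equal $M_i^{(n)}$ at $\nXX_i^{(n)}$ for $i\in\mathcal{J}_n$ and $-\infty$ elsewhere; after merging any duplicate base points that arise at shared endpoints (consistent since two adjacent sub-intervals then share that endpoint as a common maximizer), this yields $\h^{(n)}\in\mnw$ with $\mathrm{supp}(\h^{(n)})\subset[-L,L]$ and $\max\h^{(n)}\leq\beta$.

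To verify $\h^{(n)}\to\h$ in $\uc$, I use that the topology is local Hausdorff convergence of the closed hypographs $\mathrm{hypo}(\h):=\{(x,y)\in\mathbb{R}^2:y\leq\h(x)\}$. On one hand, $\h^{(n)}\leq\h$ pointwise by construction, hence $\mathrm{hypo}(\h^{(n)})\subset\mathrm{hypo}(\h)$. On the other hand, given any $(x_0,y_0)\in\mathrm{hypo}(\h)$, the condition $y_0\leq\h(x_0)<\infty$ forces $x_0\in[-L,L]$ and hence $x_0\in I_i^{(n)}$ for some $i\in\mathcal{J}_n$; then
\[
y_0\leq\h(x_0)\leq M_i^{(n)}=\h^{(n)}(\nXX_i^{(n)}),
\]
so $(\nXX_i^{(n)},y_0)\in\mathrm{hypo}(\h^{(n)})$ lies within Euclidean distance $|\nXX_i^{(n)}-x_0|\leq\delta_n$ of $(x_0,y_0)$. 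Combining the two bounds, the Hausdorff distance between the hypographs is at most $\delta_n\to 0$, which is more than enough for local Hausdorff convergence.

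Since the argument relies only on the attainment of the supremum on each compact sub-interval, which follows from upper semicontinuity alone, there is no substantive technical obstacle in this direction. The approximation comes with the explicit uniform rate $\delta_n=2L/n$, and the quantitative bounds $\mathrm{supp}(\h^{(n)})\subset[-L,L]$ and $\max\h^{(n)}\leq\beta$ required for the subsequent application of Proposition~\ref{prop: limess_convergence} are built directly into the construction. The only point requiring mild care is the presence of possibly isolated vertical spikes in $\mathrm{hypo}(\h)$, but these are handled automatically because any spike either coincides with the chosen maximizer $\nXX_i^{(n)}$ or is dominated by it on the same sub-interval.
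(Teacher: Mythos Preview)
Your proof is correct and follows essentially the same construction as the paper: partition $[-L,L]$ into small sub-intervals and place a narrow wedge at a maximizer of $\h$ on each. The paper uses dyadic intervals and verifies convergence via the sequential characterization of hypograph convergence (the $\limsup/\liminf$ conditions), whereas you use $n$ equal intervals and control the Hausdorff distance between hypographs directly; the latter is arguably cleaner and even yields an explicit rate $\delta_n=2L/n$. Your restriction to indices $i\in\mathcal{J}_n$ is a small improvement in care, since $\mnw$ requires real heights at each wedge, while the paper implicitly drops intervals on which $\h\equiv-\infty$.
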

\begin{proof}
    We will use the following characterization (see, e.g., \cite[Section 3.1]{matetski2021kpz}): a sequence $\{\h^n\}_{n\geq 1}\subset \uc$ converges to $\h\in \uc$ locally if and only if  for any $\mathsf{x}\in \mathbb{R}$, one has 
    \begin{enumerate}
        \item $\limsup_{n\to \infty} \h(\mathsf{x}_n)\leq \h(\mathsf{x})$, for all $\mathsf{x}_n\to \mathsf{x}$;
        \item There exists $\mathsf{x}_n\to \mathsf{x}$ such that $\liminf_{n\to \infty}\h(\mathsf{x}_n)\geq \h(\mathsf{x})$.
    \end{enumerate}It suffices to consider the restrictions of the functions on $[-L,L]$. For each $n$ consider the dyadic intervals $I_{n,k}:= [\frac{k}{2^n}\cdot L, \frac{k+1}{2^n}\cdot L]$, for $k=-2^n,-2^n+1,\ldots, 2^n-1$. On each interval $I_{n,k}$ the maximum of $\h$ exists, let $m_{n,k}\in I_{n,k}$ be one of the argmax, and set 
    \begin{equation*}
        \h^{n}(\XX):= \sum_{k=1}^{2^n-1} \h(m_{n,k})\mathbf{1}_{\XX= m_{n,k}} -\infty\cdot \mathbf{1}_{\XX\neq m_{n,k},\,\forall k},
    \end{equation*}
    here if some $m_{n,k}$ is the argmax of two consecutive intervals, then it should appear only once in the sum. Now it is straightforward to check that for each $\XX$
    \begin{equation*}
        \limsup_{\XX^n\to \XX} \h^n(\XX^n)\leq \h(\XX),
    \end{equation*}
    and there exists $\XX^n\to \XX$ such that 
    \begin{equation*}
         \liminf_{\XX^n\to \XX} \h^n(\XX^n)\geq \h(\XX).
    \end{equation*}
    Therefore, $\h^n\to \h$ locally in $\uc$, and hence globally since $\h^n,\h$ are supported in $[-L,L]$. 
\end{proof}
\subsection{Proof of Proposition \ref{prop: shift}}\label{sec: shift}
Recall that what we have shown in Proposition \ref{prop: approximation} is that the formula \eqref{eq: multi_time} holds for initial conditions $\h\in \mnw_0$. We would like to prove it for $\h\in \mnw$ by a shift argument. To this end it is more convenient to have a more general version of the characteristic function $\limess_\h$, defined through \eqref{eq: limess2}, instead of the original \eqref{eq: limess}. The goal of this section is to prove the equivalence of the two, i.e., Proposition \ref{prop: shift}. To this end, we denote 
\begin{equation}\label{eq: limess_a}
    \begin{aligned}
     \limess^\a_{\h}(\eta,\xi) &:= \mathrm{e}^{-\a \xi^2}\int_{\mathbb{R}} \diff s\, \mathrm{e}^{-s\xi}\cdot  \mathbb{E}_{\B(\a)=s}\left[\exp\left(\ta_-^\a \eta^2+\B(\ta_-^\a)\eta\right)\1_{\ta^\a_->-\infty}\right]\\
        &\quad+\mathrm{e}^{\a\eta^2}\int_{\mathbb{R}} \diff s\, \mathrm{e}^{+s\eta}\cdot  \mathbb{E}_{\B(\a)=s}\left[\exp\left(-\ta^\a_+ \xi^2-\B(\ta^\a_+)\xi\right)\1_{\ta^\a_+<\infty}\right]\\
        &\quad-\int_{\mathbb{R}} \diff s\,  \mathbb{E}_{\B(\a)=s}\left[\exp\left(\ta_-^\a \eta^2+\B(\ta^\a_-)\eta-\ta_+^\a \xi^2-\B(\ta^\a_+)\xi\right)\1_{|\ta^\a_\pm|<\infty}\right]\\
        &:=  \limess^{\a,1}_{\h}(\eta,\xi)+ \limess^{\a,2}_{\h}(\eta,\xi)- \limess^{\a,3}_{\h}(\eta,\xi),
    \end{aligned}
\end{equation}
where 
\begin{equation*}
    \ta_+^\a:=\inf\{\XX\geq \a: \B(\XX)\leq \h(\XX)\},\quad \ta^\a_-:=\sup\{\XX\leq \a: \B(\XX)\leq \h(\XX)\}.
\end{equation*}
We claim that $\limess^\a_{\h}=\limess_{\h}^{\nXX_1}$, for any $\a\in \mathbb{R}$. First assume $\h\in \mnw$, say
\begin{equation*}
    \h(\XX)= \sum_{i=1}^k \nHH_i \mathbf{1}_{\XX=\nXX_i} -\infty \mathbf{1}_{\XX\neq \nXX_i,\,\forall i},
\end{equation*}
where $\nXX_1>\cdots>\nXX_k$. To lighten the notation, we will fix $\eta,\xi$ and denote 
\begin{equation*}
\begin{aligned}
    &E^+(\a,s):= \mathbb{E}_{\B(\a)=s}\left[\exp\left(-\ta_+^\a \xi^2-\B(\ta_+^\a)\xi\right)\1_{\ta^\a_+<\infty}\right],\\
    &E^-(\a,s):=\mathbb{E}_{\B(\a)=s}\left[\exp\left(+\ta^\a_- \eta^2+\B(\ta^\a_-)\eta\right)\1_{\ta^\a_->-\infty}\right].
\end{aligned}
\end{equation*}
For $\a\in [\nXX_1,\infty)$, we have 
\begin{align*}
    \limess_{\h}^{\a,1}(\eta,\xi) &= \mathrm{e}^{-\a\xi^2}\int_{\mathbb{R}}\diff s\int_{\mathbb{R}}\diff b_1 \,\mathrm{e}^{-s\xi}\mathrm{p}_{\a-\nXX_1}(b_1-s)\, E^-(\nXX_1,b_1)\\
    &= \mathrm{e}^{-\nXX_1\xi^2} \int_{\mathbb{R}}\diff b_1\, \mathrm{e}^{-b_1\xi}\, E^-(\nXX_1,b_1) = \limess_{\h}^{\nXX_1,1}(\eta,\xi),
\end{align*}
where we are using the Markov property and the simple identity
\begin{equation}\label{eq: MGF_identity}
    \int_\mathbb{R}\,\mathrm{d}s\,\mathrm{e}^{sW}\mathrm{p}_t(r-s) = \mathrm{e}^{tW^2+rW}.
\end{equation}
Recall that $\mathrm{p}_{t-s}(x-y)$ is the transition density of a Brownian motion with diffusivity constant $2$. On the other hand,
\begin{align*}
     \limess_{\h}^{\a,2}(\eta,\xi)=\limess_{\h}^{\a,3}(\eta,\xi) =  \mathbf{1}_{\a=\nXX_1} \frac{\mathrm{e}^{\nXX_1(\eta^2-\xi^2)+\nHH_1(\eta-\xi)}}{\eta-\xi},\quad \forall \a\geq \nXX_1.
\end{align*}
Thus $\limess_{\h}^\a=\limess_{\h}^{\nXX_1}$ for all $\a\geq \nXX_1$. We proceed by induction. Assume for some $1\leq i\leq k$, $\limess_{\h}^\a=\chi_{\h}^{\nXX_1}$ holds for all $\a\geq \nXX_i$. Now let $\a\in [\nXX_{i+1},\nXX_{i})$. We will show $\limess_{\h}^\a = \limess_{\h}^{\nXX_i}$, which is equal to $\limess_{\h}^{\nXX_1}$ by the induction hypothesis. By the same argument as above we have 
\begin{equation}\label{eq: chi_a2}
    \limess_{\h}^{\a,2}(\eta,\xi) = \limess_{\h}^{\nXX_{i},2}(\eta,\xi)+ \mathbf{1}_{\a=\nXX_{i+1}} \frac{\mathrm{e}^{\nXX_{i+1}(\eta^2-\xi^2)+\nHH_{i+1}(\eta-\xi)}}{\eta-\xi}.
\end{equation}
On the other hand, by the Markov property and the identity \eqref{eq: MGF_identity} we have
\begin{equation}\label{eq: chi_a1}
\begin{aligned}
    \limess_{\h}^{\nXX_i,1}(\eta,\xi) &=  \frac{\mathrm{e}^{\nXX_{i}(\eta^2-\xi^2)+\nHH_i(\eta-\xi)}}{\eta-\xi}+\mathrm{e}^{-\nXX_i\xi^2}\int_{\nHH_i}^{\infty}\diff b_i\int_{\mathbb{R}}\diff s \,\mathrm{e}^{-b_i\xi}\mathrm{p}_{\nXX_i-\a}(s-b_i)\, E^-(\a,s)\\
    & = \frac{\mathrm{e}^{\nXX_{i}(\eta^2-\xi^2)+\nHH_i(\eta-\xi)}}{\eta-\xi}+ \limess_{\h}^{\a,1} - \mathrm{e}^{-\nXX_i\xi^2}\int_{-\infty}^{\nHH_i}\diff b_i\int_{\mathbb{R}}\diff s \,\mathrm{e}^{-b_i\xi}\mathrm{p}_{\nXX_i-\a}(s-b_i)\, E^-(\a,s),
\end{aligned}
\end{equation}
where in the second equality we used Fubini's theorem and the identity \eqref{eq: MGF_identity} to show that 
\begin{equation*}
    \mathrm{e}^{-\nXX_i\xi^2}\int_{\mathbb{R}}\diff b_i\int_{\mathbb{R}}\diff s \,\mathrm{e}^{-b_i\xi}\mathrm{p}_{\nXX_i-\a}(s-b_i)\, E^-(\a,s) = \mathrm{e}^{-\a\xi^2}\int_{\mathbb{R}}\diff s\,\mathrm{e}^{-s\xi}\cdot E^-(\a,s) =\limess_{\h}^{\a,1}.
\end{equation*}
Finally, 
\begin{align*}
    \limess_{\h}^{\a,3}(\eta,\xi) &= \int_{\mathbb{R}}\diff s\,E^{-}(\a,s)\cdot E^{+}(\a,s)\\
    &= \int_{\mathbb{R}}\diff s\, E^-(\a,s)\cdot \left(\int_{\mathbb{R}}\diff b_i\,\mathrm{p}_{\nXX_i-\a}(s-b_i)E^{+}(\nXX_i,b_i)+\mathbf{1}_{\a=\nXX_{i+1}}\cdot \mathrm{e}^{-\nXX_{i+1}\xi^2-s\xi}\cdot \mathbf{1}_{s\leq \nHH_{i+1}}\right).
\end{align*}
We split the $b_i$-integral into two parts depending on whether $b_i\leq \nHH_i$ or $b_i>\nHH_i$. For $b_i>\nHH_i$ we have 
\begin{equation*}
    \int_\mathbb{R}\diff s\, E^{-}(\a,s)\mathrm{p}_{\nXX_i-\a}(s-b_i) = E^{-}(\nXX_i,b_i),
\end{equation*}
by the Markov property. For $b_i\leq \nHH_i$ we have 
\begin{equation*}
    E^{+}(\nXX_i,b_i)= \mathrm{e}^{-\nXX_i\xi^2-b_i\xi}\mathbf{1}_{b_i\leq \nHH_i}.
\end{equation*}
Thus 
\begin{equation}\label{eq: chi_a3}
\begin{aligned}
    \limess_{\h}^{\a,3}(\eta,\xi) &= \int_{\nHH_i}^{\infty}\diff s\, E^{+}(\nXX_i,b_i)E^{-}(\nXX_i,b_i) + \mathrm{e}^{-\nXX_i\xi^2}\int_{-\infty}^{\nHH_i}\diff b_i\int_{\mathbb{R}}\diff s \,\mathrm{e}^{-b_i\xi}\mathrm{p}_{\a-\nXX_i}(s-b_i)\, E^-(\a,s)\\
    &\quad+\mathbf{1}_{\a=\nXX_{i+1}} \frac{\mathrm{e}^{\nXX_{i+1}(\eta^2-\xi^2)+\nHH_{i+1}(\eta-\xi)}}{\eta-\xi}.
\end{aligned}
\end{equation}
By combining \eqref{eq: chi_a2}, \eqref{eq: chi_a1} and \eqref{eq: chi_a3}, we conclude that 
\begin{equation*}
    \limess_{\h}^\a=\limess_{\h}^{\a,1}+\limess_{\h}^{\a,2}-\limess_{\h}^{\a,3} =  \limess_{\h}^{\nXX_i,1}+\limess_{\h}^{\nXX_i,2}-\limess_{\h}^{\nXX_i,3} = \limess_{\h}^{\nXX_i},
\end{equation*}
for all $\a\in [\nXX_{i+1},\nXX_{i})$. Thus, by induction $\limess_{\h}^\a=\limess_{\h}$ for all $\a\in \mathbb{R}$ and $\h\in \mnw$.

For general $\h\in \ucc$ supported on $[-L,L]$, bounded above by $\beta$, we use Proposition \ref{prop: approximation} to find a sequence $\{\h^n\}_{\geq 1}\subset \mnw$  bounded above by $\beta$ whose supports are contained in $[-L,L]$ , such that $\h^n\to \h$ in $\uc$. Then by Proposition \ref{prop: limess_convergence} we know $\limess_{\h^n}\to \limess_{\h}$ as $n\to \infty$. A minor variant of Proposition \ref{prop: limess_convergence} with the same proof shows that $\limess^\a_{\h^n}\to \limess^\a_{\h}$ as $n\to\infty$ for any $\a\in \mathbb{R}$ as well. Thus, 
\begin{equation*}
    \limess_{\h}^\a = \lim_{n\to \infty} \limess_{\h^n}^\a = \lim_{n\to \infty} \limess_{\h^n} =  \limess_{\h},
\end{equation*}
for all $\a\in \mathbb{R}$ and $\h\in \ucc$. This completes the proof of Proposition \ref{prop: shift}.

\subsection{Proof of Theorem \ref{thm: kpz_multitime}}\label{sec: proof_thm}
Recall that in Proposition \ref{prop: convergence_nw}, we have shown Theorem \ref{thm: kpz_multitime} for $\h\in \mnw_0$. We first extend it to $\h\in \mnw$ using Proposition \ref{prop: shift}. There exists $\alpha,\beta\in\mathbb{R}$ such that $\h^{\alpha,\beta}:= \h(\cdot +\alpha)+\beta\in \mnw_0$. Thus, by the invariance property of the KPZ fixed point (see, e.g., \cite{matetski2021kpz}[Theorem 4.5]) we have
\begin{equation}\label{eq: shift_space}
    \mathbb{P}\left(\bigcap_{\ell=1}^m\{\mathcal{H}(\XX_\ell,\TT_\ell; \h)\leq \HH_\ell\} \right)= \mathbb{P}\left(\bigcap_{\ell=1}^m\{\mathcal{H}(\XX_\ell-\alpha,\TT_\ell; \h^{\alpha,\beta})\leq \HH_\ell+\beta\} \right).
\end{equation}
We apply Proposition \ref{prop: convergence_nw} to the right-hand side of \eqref{eq: shift_space} and compare the resulting formula with the right-hand side of \eqref{eq: multi_time}. In order to prove that Theorem \ref{thm: kpz_multitime} holds for $\h\in \mnw$, it suffices to show that 
\begin{equation}\label{eq: shift_characteristic}
    \limess_{\h}(\eta,\xi) = \mathrm{e}^{\alpha(\eta^2-\xi^2)+\beta(\xi-\eta)}\limess_{\h^{\alpha,\beta}}(\eta,\xi).
\end{equation}
Now we prove \eqref{eq: shift_characteristic}. By the definition of the characteristic function in \eqref{eq: limess}, we have 
\begin{align*}
    \limess_{\h^{\alpha,\beta}}(\eta,\xi)&=\int_{\mathbb{R}} \diff s\, \mathrm{e}^{-s\xi}\cdot  \mathbb{E}_{\B(0)=s}\left[\exp\left(\ta^{\alpha,\beta}_- \eta^2+\B(\ta^{\alpha,\beta}_-)\eta\right)\1_{\ta^{\alpha,\beta}_->-\infty}\right]\\
        &\quad+\int_{\mathbb{R}} \diff s\, \mathrm{e}^{+s\eta}\cdot  \mathbb{E}_{\B(0)=s}\left[\exp\left(-\ta^{\alpha,\beta}_+\xi^2-\B(\ta^{\alpha,\beta}_+)\xi\right)\1_{\ta^{\alpha,\beta}_+<\infty}\right]\\
        &\quad-\int_{\mathbb{R}} \diff s\,  \mathbb{E}_{\B(0)=s}\left[\exp\left(\ta^{\alpha,\beta}_- \eta^2+\B(\ta^{\alpha,\beta}_-)\eta\right)\1_{\ta^{\alpha,\beta}_->-\infty}\right]\\
        &\quad\qquad\quad  \cdot \mathbb{E}_{\B(0)=s}\left[\exp\left(-\ta^{\alpha,\beta}_+ \xi^2-\B(\ta^{\alpha,\beta}_+)\xi\right)\1_{\ta^{\alpha,\beta}_+<\infty}\right],
\end{align*}
where 
\begin{equation*}
     \ta_+^{\alpha,\beta}:=\inf\{\mathsf{x}\geq 0: \B(\mathsf{x})\leq \h(\mathsf{x}+\alpha)+\beta\},\quad \ta^{\alpha,\beta}_-:=\sup\{\mathsf{x}\leq 0: \B(\mathsf{x})\leq \h(\mathsf{x}+\alpha)+\beta\}.
\end{equation*}
Note that $\B(\mathsf{x})\leq \h(\mathsf{x}+\alpha)+\beta$ if and only if $\widehat{\B}(\mathsf{x}+\alpha)\leq \h(\mathsf{x}+\alpha)$ where $\widehat{\B}(\mathsf{x}):= \B(\mathsf{x}-\alpha)-\beta$. The invariance of the Brownian motion implies that 
\begin{equation*}
    \mathrm{Law}_{\B(0)=s}(\ta_+^{\alpha,\beta}, \B(\ta_+^{\alpha,\beta})) = \mathrm{Law}_{\widehat{\B}(\alpha)=s-\beta}(\widehat{\ta}_+-\alpha, \widehat{\B}(\widehat{\ta}_+)+\beta),
\end{equation*}
where 
\begin{equation*}
    \widehat{\ta}_-:= \sup\{\mathsf{x}\leq \alpha: \widehat{\B}(\mathsf{x})\leq \h(\mathsf{x})\}.
\end{equation*}
Thus 
\begin{align*}
    &\int_{\mathbb{R}} \diff s\, \mathrm{e}^{-s\xi}\cdot  \mathbb{E}_{\B(0)=s}\left[\exp\left(\ta^{\alpha,\beta}_- \eta^2+\B(\ta^{\alpha,\beta}_-)\eta\right)\1_{\ta^{\alpha,\beta}_->-\infty}\right]\\
    &= \int_{\mathbb{R}} \diff s\, \mathrm{e}^{-s\xi}\cdot  \mathbb{E}_{\widehat{\B}(\alpha)=s-\beta}\left[\exp\left((\widehat{\ta}_--\alpha) \eta^2+(\widehat{\B}(\widehat{\ta}_-)+\beta)\eta\right)\1_{\widehat{\ta}_->-\infty}\right]\\
    &= \mathrm{e}^{\beta(\eta-\xi)-\alpha\eta^2}\int_\mathbb{R}\diff s\, \mathrm{e}^{-s\xi}\cdot  \mathbb{E}_{\widehat{\B}(\alpha)=s}\left[\exp\left(\widehat{\ta}_- \eta^2+\widehat{\B}(\widehat{\ta}_-)\eta\right)\1_{\widehat{\ta}->-\infty}\right].
\end{align*}
By applying similar arguments to the second and third term, we see that 
\begin{align*}
   \mathrm{e}^{\alpha(\eta^2-\xi^2)+\beta(\xi-\eta)}\limess_{\h^{\alpha,\beta}}(\eta,\xi) &=  \mathrm{e}^{-\alpha\xi^2}\int_{\mathbb{R}}\diff s\,\mathbb{E}_{\widehat{\B}(\alpha)=s}\left[\exp\left(\widehat{\ta}_- \eta^2+\widehat{\B}(\widehat{\ta}_-)\eta\right)\1_{\widehat{\ta}_->-\infty}\right]\\
   &\quad+ \mathrm{e}^{\alpha\eta^2}\int_{\mathbb{R}}\diff s\,\mathbb{E}_{\widehat{\B}(\alpha)=s}\left[\exp\left(-\widehat{\ta}_+ \xi^2-\widehat{\B}(\widehat{\ta}_+)\xi\right)\1_{\widehat{\ta}_+<\infty}\right]\\
   &\quad-\int_{\mathbb{R}} \diff s\,  \mathbb{E}_{\widehat{\B}(\alpha)=s}\left[\exp\left(\widehat{\ta}_- \eta^2+\widehat{\B}(\widehat{\ta
   }_-)\eta-\widehat{\ta}_+ \xi^2-\widehat{\B}(\widehat{\ta}_+)\xi\right)\1_{|\widehat{\ta}_\pm|<\infty}\right],
\end{align*}
which is equal to $\limess_{\h}^\alpha$, and hence $\limess_{\h}$, by Proposition \ref{prop: shift}. Finally, for a general $\h\in \ucc$, we can choose a sequence $\{\h^n\}_{n\geq 1}\subset \mnw$ such that $\h^n\to \h$ and they satisfy the conditions in Proposition \ref{prop: approximation}. By the continuity of the law of the KPZ fixed point with respect to the initial condition, we know 
\begin{equation*}
    \lim_{n\to \infty}  \mathbb{P}\left(\bigcap_{\ell=1}^m\{\mathcal{H}(\XX_\ell,\TT_\ell; \h^n)\leq \HH_\ell\} \right) = \mathbb{P}\left(\bigcap_{\ell=1}^m\{\mathcal{H}(\XX_\ell,\TT_\ell; \h)\leq \HH_\ell\} \right).
\end{equation*}
Hence, by Proposition \ref{prop: approximation} we have 
\begin{align*}
    \mathbb{P}\left(\bigcap_{\ell=1}^m\{\mathcal{H}(\XX_\ell,\TT_\ell; \h)\leq \HH_\ell\} \right) &= \lim_{n\to \infty}  \mathbb{P}\left(\bigcap_{\ell=1}^m\{\mathcal{H}(\XX_\ell,\TT_\ell; \h^n)\leq \HH_\ell\} \right)\\
    &= \lim_{n\to\infty}\prod_{\ell=1}^{m-1} \oint_0 \frac{\diff z_\ell}{2\pi\ii z_\ell(1-z_\ell)}\mathrm{D}_{\h^n}(z_1,\ldots,z_{m-1})\\
    &= \prod_{\ell=1}^{m-1} \oint_0 \frac{\diff z_\ell}{2\pi\ii z_\ell(1-z_\ell)}\mathrm{D}_{\h}(z_1,\ldots,z_{m-1}).
\end{align*}
This completes the proof of Theorem \ref{thm: kpz_multitime}.

\section{Reduction in the equal-time case}\label{sec: equal time}
The goal of this section is to simplify the multipoint formula \eqref{eq: multi_time} under the additional assumption that all the time parameters $\TT_1,\ldots, \TT_m$ are the same, say, equal to $1$.  We will show in Section \ref{sec: equal_new} that one can get rid of the additional contour integrals with respect to the parameters $z_i$'s in \eqref{eq: multi_time}, and get a Fredholm determinant formula for the equal-time multipoint distribution. Then, in Section \ref{sec: equal_equiv} we will show that the Fredholm determinant formula matches the path integral formula in \cite{matetski2021kpz}. 
\subsection{A new Fredholm determinant formula for the equal-time multipoint distribution of the KPZ fixed point}\label{sec: equal_new}
The following Fredholm determinant formula holds for t
he KPZ fixed point with a general initial condition $\h$ that is compactly supported. We remark that an analogous Fredholm determinant for the narrow wedge initial condition was obtained in \cite[Proposition 2.1]{LiuOrtiz25}, with the kernel conjugated to $\mathbf{T}_{\h}$  below and defined on a slightly different space.

\begin{prop}\label{prop: contour_rep}
    Let $\alpha_1<\cdots<\alpha_m$. Consider the KPZ fixed point starting from initial condition $\h\in \ucc$. Then the following formula for the multipoint distribution at  space-time points $(\alpha_1,1),\ldots,(\alpha_m,1)$ holds:
    \begin{equation}\label{eq: equal_time1}
        \mathbb{P}\left(\bigcap_{\ell=1}^m \mathcal{H}(\alpha_\ell,1;\h)\leq \beta_\ell\right) =\det (\mathrm{I}+\mathbf{T}_\h)_{L^2(\{1,\ldots,m\}\times \Gamma_{1,\RR})},
    \end{equation}
    where the operator $\mathbf{T}_\h: L^2(\{1,\ldots,m\}\times \Gamma_{1,\RR})\to L^2(\{1,\ldots,m\}\times \Gamma_{1,\RR})$ has the following kernel:
    \begin{equation}\label{eq: equal_kernel2}
        \mathbf{T}_\h(i,\zeta;j,\eta):= 
            \left(\prod_{\ell=1}^{i}\int_{\Gamma_{\ell,\LL}^{\mathrm{in}}} \frac{\diff \xi_\ell}{2\pi \mathrm{i}}\right)\frac{\prod_{\ell=1}^i\mathrm{F}_\ell(\xi_\ell)\cdot \limess_\h(\eta,\xi_1)}{\prod_{\ell=1}^{i-1}(\xi_\ell-\xi_{\ell+1})\cdot (\xi_i-\zeta)}\cdot \frac{1}{\mathrm{f}_j(\eta)}.
    \end{equation}
    Here $\Gamma_{1,\LL}^{\mathrm{in}}:=\Gamma_{1,\LL}$ and
    \begin{equation}
        \mathrm{F}_i(\zeta)=\frac{\mathrm{f}_i(\zeta)}{\mathrm{f}_{i-1}(\zeta)}:=\begin{cases}
            \mathrm{e}^{-\frac{1}{3}\zeta^3+ \alpha_1 \zeta^2+\beta_1 \zeta},\quad &i=1,\\
            \mathrm{e}^{(\alpha_i-\alpha_{i-1})\zeta^2+(\beta_i-\beta_{i-1})\zeta},\quad &2\leq i\leq m. 
        \end{cases}
    \end{equation}
\end{prop}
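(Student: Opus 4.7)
The plan is to start from the series expansion formula in Proposition~\ref{prop: series} with $\TT_1 = \cdots = \TT_m = 1$, perform the $z_1, \ldots, z_{m-1}$ contour integrals around the origin explicitly, and reorganize the resulting sum as the Fredholm expansion of $\det(\mathrm{I} + \mathbf{T}_\h)$ on $L^2(\{1,\ldots,m\}\times \Gamma_{1,\RR})$. Under the equal-time assumption, the functions $\mathrm{f}_\ell$ for $\ell \geq 2$ contain no cubic factor, so each pair of contours $\Gamma_{\ell,\mathrm{(L/R)}}^{\out}$ and $\Gamma_{\ell,\mathrm{(L/R)}}^{\inn}$ can be brought together without destroying the convergence of the integrand. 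The nested structure is only required to track the $z_\ell$-dependence of the measure $\mathrm{d}\mu_{\boldsymbol{z}}$.

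Next I would perform the $z_\ell$-integrals combinatorially. The outer factor $\frac{1}{2\pi \mathrm{i} z_\ell (1-z_\ell)} (1-z_\ell)^{n_\ell}(1-z_\ell^{-1})^{n_{\ell+1}}$, together with the $z_\ell$-dependence of $\mathrm{d}\mu_{\boldsymbol{z}}$ on contours labeled $\ell+1$, can be evaluated by classifying, for each variable on the $(\ell+1)$-th level, whether it lies on an outer or inner contour. After collapsing outer onto inner, each outer variable picks up a residue at the coincident inner variable (from the Cauchy factors in \eqref{eq: Dn_lim}), and the assignment of $(\mathrm{outer}, \mathrm{inner})$ labels generates a binomial sum. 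A residue computation at $z_\ell = 0$ then produces a closed-form factor that collapses the summation over $n_{\ell+1}$ into a single chain of Cauchy kernels $1/(\xi_\ell - \xi_{\ell+1})$ connecting successive levels.

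After the $z$-integrals are performed, one introduces index labels $i, j \in \{1, \ldots, m\}$ for each $(\eta_r, \zeta_r)$ pair in the expansion, marking the highest level at which each variable appears. With this rewriting, the series becomes
\begin{equation*}
\sum_{n \geq 0} \frac{1}{n!} \sum_{i_1,\ldots,i_n, j_1,\ldots,j_n = 1}^{m} \int \cdots \int \det\left[\mathbf{T}_\h(i_r, \zeta_r; j_s, \eta_s)\right]_{1\leq r,s \leq n} \prod_{r=1}^n \frac{\mathrm{d}\zeta_r \, \mathrm{d}\eta_r}{(2\pi\mathrm{i})^2},
\end{equation*}
which is exactly the Fredholm expansion of $\det(\mathrm{I} + \mathbf{T}_\h)$ on $L^2(\{1,\ldots,m\}\times \Gamma_{1,\RR})$. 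The chain $\prod_{\ell=1}^{i-1} 1/(\xi_\ell - \xi_{\ell+1}) \cdot 1/(\xi_i - \zeta)$ in $\mathbf{T}_\h$ records the telescoped residues; the product $\prod_{\ell=1}^i \mathrm{F}_\ell(\xi_\ell)$ arises from the ratios $\mathrm{f}_\ell/\mathrm{f}_{\ell-1}$ accumulated along the chain; and $\limess_\h(\eta, \xi_1)/\mathrm{f}_j(\eta)$ captures the initial condition from the top-left block together with the propagation of $\eta$ through the $j$-th level.

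The main obstacle will be the combinatorial bookkeeping in the second step: verifying that the residues from collapsing outer-onto-inner contours, combined with the explicit $z_\ell$-integrals, produce precisely the $(i,j)$-indexed kernel $\mathbf{T}_\h$ with the correct propagator structure. A closely related computation was carried out in \cite{LiuOrtiz25} for the narrow wedge initial condition (where $\limess_\h(\eta,\xi) = 1/(\eta - \xi)$), so the present argument should largely follow that template. The only substantive modification is the replacement of $1/(\eta - \xi_1)$ by $\limess_\h(\eta, \xi_1)$ in the initial block; since $\limess_\h$ appears only in the top-left position of the original kernel in Definition~\ref{def:D_general} and only couples to $\xi_1$, this substitution propagates transparently through the reduction without affecting the contour deformations or the residue calculus.
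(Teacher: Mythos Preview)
Your approach is essentially the same as the paper's: the paper also starts from the series expansion with $\TT_\ell\equiv 1$, first reduces the $z$-integrals to the form \eqref{eq: equal_series} (this is the content of the Lemma following Proposition~\ref{prop: contour_rep}, whose proof is omitted and attributed to \cite[Lemma 2.4]{LiuOrtiz25} exactly as you anticipate), and then applies a generalized Andr\'eief identity \cite[Lemma 1.2]{LiuOrtiz25} to the $\xi$-integrals to recognize the Fredholm expansion of $\det(\mathrm{I}+\mathbf{T}_\h)$. Your observation that the only modification relative to the narrow-wedge case is the replacement $1/(\eta-\xi_1)\mapsto \limess_\h(\eta,\xi_1)$ in the top-left block is precisely the point.

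One correction: your displayed Fredholm expansion is not right as written. The Fredholm series of $\det(\mathrm{I}+\mathbf{T}_\h)$ on $L^2(\{1,\ldots,m\}\times \Gamma_{1,\RR})$ has a single index set and a single family of integration variables,
\[
\sum_{n\geq 0}\frac{1}{n!}\sum_{i_1,\ldots,i_n=1}^m \int_{\Gamma_{1,\RR}^n}\det\bigl[\mathbf{T}_\h(i_r,\eta_r;i_s,\eta_s)\bigr]_{r,s=1}^n \prod_{r=1}^n \frac{\mathrm{d}\eta_r}{2\pi\mathrm{i}},
\]
not separate $(i_r,\zeta_r)$ and $(j_s,\eta_s)$ with $2n$ integrations. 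In the paper the passage from \eqref{eq: equal_series} to this form is exactly the Andr\'eief step applied to the $\xi$-integrals; you should invoke that identity explicitly rather than leaving the reorganization vague.
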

The proof of Proposition \ref{prop: contour_rep} is based on the following lemma, whose proof is almost identical to \cite[Lemma 2.4]{LiuOrtiz25} and is omitted here. Note that the only difference (modulo obvious change of notation) between \eqref{eq: equal_series} and \cite[(2.31)]{LiuOrtiz25} is that the Cauchy determinant $\mathrm{C}(\etab,\xib)$ is replaced by $\det\left(\limess_\h(\eta_{\ell_i}^{(i)},\xi_{\ell_j}^{(j)})\right)$.

\begin{lm}
    Under the same assumption as in Proposition \ref{prop: contour_rep}, we have 
    \begin{equation}
         \mathbb{P}\left(\bigcap_{\ell=1}^m \mathcal{H}(\alpha_\ell,1;\h)\leq \beta_\ell\right) = \sum_{n_1\geq \cdots\geq n_m\geq 0}\frac{1}{(n_1!\cdots n_m!)^2}\widehat{\mathrm{D}}^\mathbf{(n)}_\h,
    \end{equation}
    where $\mathbf{n}=(n_1,\ldots,n_m)\in \mathbb{Z}_+^m$ and
    \begin{equation}\label{eq: equal_series}
    \begin{aligned}
        \widehat{\mathrm{D}}^\mathbf{(n)}_\h &= \left(\prod_{i=1}^{m}\frac{n_i!}{k_i!}\right)^2 \left(\prod_{i=1}^m \prod_{\ell_i=1}^{k_i}\int_{\Gamma_{1,\LL}}\frac{\diff \xi_{\ell_i}^{(i)}}{2\pi\mathrm{i}}\int_{\Gamma_{1,\RR}}\frac{\diff \eta_{\ell_i}^{(i)}}{2\pi\mathrm{i}}\right)\det\left[\limess_\h(\eta_{\ell_i}^{(i)},\xi_{\ell_j}^{(j)})\right]_{\substack{(i,\ell_i),(j,\ell_j)\\1\leq i,j\leq m, 1\leq \ell_i\leq k_i}}\\
        &\quad\cdot \prod_{i=1}^m \det \left[h_i(\xi_a^{(i)},\eta_b^{(i)})\right]_{a,b=1}^{k_i}\cdot \prod_{i=1}^m\prod_{\ell_i=1}^{k_i} \frac{1}{\mathrm{f}_i(\eta_{\ell_i}^{(i)})}.
        \end{aligned}
    \end{equation}
    Here $k_i:=n_i-n_{i+1}\geq 0$ with the convention that $n_{m+1}:=0$. The functions $h_i(\xi,\eta)$ are defined for all $(\xi,\eta)\in \Gamma_{1,\LL}\times \Gamma_{1,\RR}$ as follows:
    \begin{equation}
        h_i(\xi,\eta):=\begin{cases}
            \displaystyle \frac{\mathrm{F}_1(\xi)}{\xi-\eta},\quad &i=1,\\
            \displaystyle \prod_{\ell=2}^i \int_{\Gamma_{\ell,\LL}^{\mathrm{in}}}\frac{\diff \xi_\ell}{2\pi\mathrm{i}} \frac{\mathrm{F}_1(\xi)\cdot \prod_{\ell=2}^i \mathrm{F}_\ell(\xi_\ell)}{(\xi-\xi_2)\cdot \prod_{\ell=2}^{i-1}(\xi_\ell-\xi_{\ell+1})\cdot (\xi_i-\eta)},\quad & 2\leq i\leq m.
        \end{cases}
    \end{equation}
\end{lm}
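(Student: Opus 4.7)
The plan has two stages: first reduce the probability on the left-hand side of \eqref{eq: equal_time1} to the series expansion given by the Lemma stated just after the proposition, and second rewrite that series as $\det(\mathrm{I}+\mathbf{T}_\h)$ via Andr\'eief's identity together with a partitioned Laplace expansion. The first stage is the Lemma itself, whose proof (mirroring \cite[Lemma 2.4]{LiuOrtiz25}) starts from Theorem \ref{thm: kpz_multitime} specialized to $\TT_1=\cdots=\TT_m=1$ together with \eqref{eq: Dn_lim}, and evaluates the residues of the $(m-1)$-fold contour integral in $z_1,\ldots,z_{m-1}$ around the origin; this forces the nesting $n_1\geq n_2\geq\cdots\geq n_m\geq 0$ and reproduces \eqref{eq: equal_series}. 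Reindexing by $k_i:=n_i-n_{i+1}$ (with $n_{m+1}:=0$), the combinatorial prefactor $\prod_i(n_i!/k_i!)^2$ combines with $\prod_i(n_i!)^{-2}$ to yield $\prod_i(k_i!)^{-2}$, so the probability equals $\sum_{k_1,\ldots,k_m\geq 0}I'_{\mathbf{k}}/(k_1!\cdots k_m!)^2$, where $I'_{\mathbf{k}}$ denotes the $(\xi,\eta)$-integral part of \eqref{eq: equal_series}.

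To integrate out the $\xi$-variables in $I'_{\mathbf{k}}$, I would expand $\det[\limess_\h(\eta^{(i)}_{\ell_i},\xi^{(j)}_{\ell_j})]$ by a partitioned Laplace expansion along the columns: group the $N=\sum_ik_i$ row indices $\alpha=(i,\ell_i)$ into subsets $S_j$ with $|S_j|=k_j$, with $S_j$ pairing off with the columns of block $j$. For each partition, Andr\'eief's identity applied to the block-$j$ variables yields
\begin{equation*}
\int\det[\limess_\h(\eta_\alpha,\xi^{(j)}_{\ell_j})]_{\alpha\in S_j,\,\ell_j}\det[h_j(\xi^{(j)}_a,\eta^{(j)}_b)]_{a,b}\prod_{\ell_j}\frac{d\xi^{(j)}_{\ell_j}}{2\pi\mathrm{i}}=k_j!\,\det[\mathcal{K}_j(\eta_\alpha,\eta^{(j)}_b)]_{\alpha\in S_j,\,b},
\end{equation*}
where $\mathcal{K}_j(\eta,\eta'):=\int_{\Gamma_{1,\LL}}\limess_\h(\eta,\xi)h_j(\xi,\eta')\,d\xi/(2\pi\mathrm{i})$. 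Summing over all such partitions and tracking the Laplace signs reassembles a single $N\times N$ determinant $\det[\mathcal{K}_{j_\gamma}(\eta_\alpha,\eta_\gamma)]_{\alpha,\gamma}$ on the composite index set, multiplied by $\prod_j k_j!$.

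Next, absorbing $\prod_{i,\ell_i}1/\mathrm{f}_i(\eta^{(i)}_{\ell_i})$ into the rows and using the identity
\begin{equation*}
\mathbf{T}_\h(j_\gamma,\eta_\gamma;\,j_\alpha,\eta_\alpha)=\frac{1}{\mathrm{f}_{j_\alpha}(\eta_\alpha)}\,\mathcal{K}_{j_\gamma}(\eta_\alpha,\eta_\gamma),
\end{equation*}
which follows from a direct comparison of \eqref{eq: equal_kernel2} with the integrated definition of $h_i$ (recognizing the outermost integration variable there as $\xi_1$), I obtain $I'_{\mathbf{k}}=(\prod_j k_j!)\int\det[\mathbf{T}_\h((j_\gamma,\eta_\gamma);(j_\alpha,\eta_\alpha))]\prod d\eta$. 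The nested sum over $\mathbf{k}$ can then be lifted to a flat sum over tuples $(i_1,\ldots,i_N)\in\{1,\ldots,m\}^N$, since by antisymmetry of the determinant each $\mathbf{k}$ contributes $N!/\prod_j k_j!$ identical terms; this reduces the overall prefactor to $1/N!$, yielding the Fredholm series for $\det(\mathrm{I}+\mathbf{T}_\h)$ on $L^2(\{1,\ldots,m\}\times\Gamma_{1,\RR})$.

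The main obstacle is the partitioned Laplace expansion in the second paragraph: verifying that the sum over partitions with block-size profile $(k_1,\ldots,k_m)$ reassembles the full $N\times N$ composite determinant with the correct signs requires careful bookkeeping of the row orderings induced by each partition. Once this combinatorial identity is established, the remaining ingredients—the identification with $\mathbf{T}_\h$ via \eqref{eq: equal_kernel2} and the symmetrization to the flat Fredholm series—are direct formula manipulations.
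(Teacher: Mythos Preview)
Your proposal targets the wrong statement. The statement you were asked to prove is the Lemma, but most of your proposal (paragraphs 2--4) is a proof of Proposition~\ref{prop: contour_rep}, which \emph{uses} the Lemma as input. Only your first paragraph addresses the Lemma.

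For the Lemma, your one-sentence outline---start from Theorem~\ref{thm: kpz_multitime} at equal times, expand via \eqref{eq: Dn_lim}, and evaluate the $z_\ell$-residues at the origin to force $n_1\geq\cdots\geq n_m$---is correct, and it is exactly the paper's approach: the paper states that the proof ``is almost identical to \cite[Lemma 2.4]{LiuOrtiz25} and is omitted here,'' and you likewise defer to that reference. So for the statement in question, your approach and the paper's coincide.

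That said, your description skips the nontrivial content. The residue evaluation is not mere bookkeeping: one must track how the $z_\ell$-dependent weights in $\mathrm{d}\mu_{\boldsymbol{z}}$ on the in/out components of $\Gamma_{\ell,\LL}$ and $\Gamma_{\ell,\RR}$ combine with the prefactors $(1-z_\ell)^{n_\ell}(1-z_\ell^{-1})^{n_{\ell+1}}$, and then show how the chain of Cauchy determinants $\prod_{\ell=1}^{m-1}\mathrm{C}(\xib^{(\ell)}\sqcup\etab^{(\ell+1)};\etab^{(\ell)}\sqcup\xib^{(\ell+1)})\cdot\mathrm{C}(\xib^{(m)};\etab^{(m)})$, after integrating out the surplus $\xi$- and $\eta$-variables at levels $\ell\geq 2$, collapses into the block structure $\prod_{i=1}^m\det[h_i(\xi_a^{(i)},\eta_b^{(i)})]$ with the combinatorial factor $(\prod_i n_i!/k_i!)^2$. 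This collapse is where the functions $h_i$ first appear, and you give no hint of the mechanism.

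Your remaining paragraphs treat Proposition~\ref{prop: contour_rep}. There your partitioned-Laplace-plus-standard-Andr\'eief route is correct but differs from the paper, which applies a generalized Andr\'eief identity from \cite[Lemma 1.2]{LiuOrtiz25} in a single step; your approach essentially re-proves that identity from scratch.
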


\begin{proof}[Proof of Proposition \ref{prop: contour_rep}]
   We apply a generalization of the Andreief's identity obtained in \cite[Lemma 1.2]{LiuOrtiz25} to the $\xi$-integrals. It gives 
    \begin{equation}
    \begin{aligned}
         \widehat{\mathrm{D}}^\mathbf{(n)}_\h &= \prod_{i=1}^m \frac{(n_i!)^2}{k_i!}\left(\prod_{i=1}^m\prod_{\ell_i=1}^{k_i}\int_{\Gamma_{1,\RR}}\frac{\diff \eta_{\ell_i}^{(i)}}{2\pi\mathrm{i}}\right)\det\left[\int_{\Gamma_{1,\LL}}\frac{\diff \xi_1}{2\pi\mathrm{i}}h_i(\xi_1,\eta_{\ell_i}^{(i)})\cdot \frac{\limess_{\h}(\eta_{\ell_j}^{(j)},\xi_1)}{\mathrm{f}_j(\eta_{\ell_j}^{(j)})}\right]_{(i,\ell_i),(j,\ell_j)}\\
         &= \prod_{i=1}^m \frac{(n_i!)^2}{k_i!}\left(\prod_{i=1}^m\prod_{\ell_i=1}^{k_i}\int_{\Gamma_{1,\RR}}\frac{\diff \eta_{\ell_i}^{(i)}}{2\pi\mathrm{i}}\right)\det\left[\mathbf{T}_\h(i,\eta_{\ell_i}^{(i)};j,\eta_{\ell_j}^{(j)})\right]_{(i,\ell_i),(j,\ell_j)},
    \end{aligned}
    \end{equation}
    where $\mathbf{T}_\h(i,\zeta;j,\eta)$ is defined as in \eqref{eq: equal_kernel2}. Thus, we have 
    \begin{align*}
        \mathbb{P}\left(\bigcap_{\ell=1}^m \mathcal{H}(\alpha_\ell,1;\h)\leq \beta_\ell\right) &= \sum_{k_1,\ldots,k_m\geq 0} \frac{1}{k_1!\cdots k_m!}\left(\prod_{i=1}^m\prod_{\ell_i=1}^{k_i}\int_{\Gamma_{1,\RR}}\frac{\diff \eta_{\ell_i}^{(i)}}{2\pi\mathrm{i}}\right)\det\left[\mathbf{T}_\h(i,\eta_{\ell_i}^{(i)};j,\eta_{\ell_j}^{(j)})\right]_{(i,\ell_i),(j,\ell_j)}\\
        &= \det(\mathrm{I}+\mathbf{T}_\h)_{L^2(\{1,\ldots,m\}\times \Gamma_{1,\RR})}.
    \end{align*}
\end{proof}

\subsection{Equivalence with the path integral formula of \cite{matetski2021kpz}}\label{sec: equal_equiv}
The goal of this section is to prove that the equal-time multipoint formula \eqref{eq: equal_time1} is equivalent to the following path integral formula  obtained in \cite{matetski2021kpz} when the initial condition is compactly supported.
\begin{prop}[Proposition 4.3 of \cite{matetski2021kpz}]\label{prop: pathintegral}
    \begin{equation}
        \mathbb{P}\left(\bigcap_{\ell=1}^m \mathcal{H}(\alpha_\ell,1;\h)\leq \beta_\ell\right) =\det (\mathrm{I}-\mathbf{K}_{1,\alpha_1}^{\mathrm{hypo}(\h)}+\mathbf{1}_{\leq \beta_1}\mathrm{e}^{(\alpha_1-\alpha_2)\partial^2}\mathbf{1}_{\leq \beta_2}\cdots \mathbf{1}_{\leq \beta_m}\mathrm{e}^{(\alpha_m-\alpha_1)\partial^2}\mathbf{K}_{1,\alpha_1}^{\mathrm{hypo}(\h)})_{L^2(\mathbb{R})}.
    \end{equation}
\end{prop}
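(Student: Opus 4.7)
The plan is to transport the contour-integral Fredholm determinant $\det(I+\mathbf{T}_\h)$ of Proposition~\ref{prop: contour_rep} over to $L^2(\mathbb{R})$ by conjugating with Laplace-type operators, then to reorganize the resulting product using the cyclic invariance of Fredholm determinants. This is a direct generalization of the strategy carried out in \cite{LiuOrtiz25} for the narrow wedge initial condition, the only structural novelty being that the Brownian hitting kernel $\limess_\h(\eta,\xi)$ now plays the role formerly played by $\tfrac{1}{\eta-\xi}$.

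Concretely, I would introduce operators $\mathcal{L}_{\RR}:L^2(\mathbb{R})\to L^2(\Gamma_{1,\RR})$ and $\mathcal{L}_{\LL}:L^2(\mathbb{R})\to L^2(\Gamma_{1,\LL}^{\mathrm{in}})$ with exponential kernels implementing the relevant Laplace transforms, and then recognize each ingredient of the kernel \eqref{eq: equal_kernel2} in terms of familiar real-space objects: the Cauchy factors $\tfrac{1}{\xi_\ell-\xi_{\ell+1}}$ and $\tfrac{1}{\xi_i-\zeta}$ become (up to signs and orientations) indicator projections $\mathbf{1}_{\leq c}$ on $L^2(\mathbb{R})$; the quadratic exponentials $e^{(\alpha_i-\alpha_{i-1})\xi^2}$ become heat semigroups $e^{(\alpha_i-\alpha_{i-1})\partial^2}$; the linear exponentials $e^{(\beta_i-\beta_{i-1})\xi}$ combine with the Cauchy factors to produce the thresholds $\beta_i$ in the indicators; and the cubic-plus-quadratic factor in $\mathrm{F}_1$, together with $1/\mathrm{f}_j(\eta)$, reproduces the Airy operator $\mathbf{K}_{1,\alpha_1}$ of \cite{matetski2021kpz}.

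The key step is then to identify $\limess_\h(\eta,\xi_1)$, once flanked by $\mathcal{L}_{\RR}$ and $\mathcal{L}_{\LL}$, with the hypograph factor $\mathbf{K}^{\mathrm{hypo}(\h)}_{1,\alpha_1} \mathbf{K}_{1,\alpha_1}^{-1}$ from \cite{matetski2021kpz}. This follows from matching the two Brownian hitting expectations: expand $\mathbf{K}^{\mathrm{hypo}(\h)}_{1,\alpha_1}(x,y)$ via the $\mathbf{K}_{1,\alpha_1}$-convolution with the hitting distribution of a Brownian motion hitting $\mathrm{hypo}(\h)$, then take the double Laplace transform in both variables and compare with the definition \eqref{eq: limess}, using Proposition~\ref{prop: shift} to shift the starting point of the Brownian motion from $0$ to $\alpha_1$. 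Once these identifications are in place, the operator inside $\det(I+\mathbf{T}_\h)$, after summing the $i$-index via telescoping, becomes
\[
\mathbf{1}_{\leq \beta_1}\, e^{(\alpha_1-\alpha_2)\partial^2}\, \mathbf{1}_{\leq \beta_2} \cdots \mathbf{1}_{\leq \beta_m}\, e^{(\alpha_m-\alpha_1)\partial^2}\, \mathbf{K}^{\mathrm{hypo}(\h)}_{1,\alpha_1} \;-\; \mathbf{K}^{\mathrm{hypo}(\h)}_{1,\alpha_1},
\]
and a single application of Sylvester's identity $\det(I+AB)=\det(I+BA)$ yields the right-hand side of Proposition~\ref{prop: pathintegral}.

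The main obstacle will be justifying the contour-to-real-line conjugation rigorously: the contours $\Gamma_{\ell,\LL}^{\mathrm{in}}$ and $\Gamma_{1,\RR}$ are unbounded and the integrands carry cubic exponentials, so absolute convergence must be checked before Fubini and Sylvester can be applied, and trace-class estimates must be established to interpret the resulting Fredholm determinants on $L^2(\mathbb{R})$. This is exactly where the bounds of Proposition~\ref{prop:convergence_lim} on $\limess_\h$ combine with the super-exponential cubic decay of $\mathrm{F}_\ell$ from Lemma~\ref{lm: function_converge} along the Airy contours to control the operators. A secondary technical point is that the inner contours $\Gamma_{\ell,\LL}^{\mathrm{in}}$ are genuinely nested, so one must verify that the real-space product is independent of these nested choices; this follows from analyticity of the integrands and the same decay estimates. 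The identification of the Brownian-hitting kernels in the third paragraph — which is the step that truly uses the general form of $\h$ rather than the narrow wedge specialization — is expected to be the most delicate computation.
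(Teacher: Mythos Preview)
Your proposal is correct and follows essentially the same route as the paper: factor $\mathbf{T}_\h=L_1L_2$ through $L^2(\mathbb{R})$, apply Sylvester's identity $\det(\mathrm{I}+L_1L_2)=\det(\mathrm{I}+L_2L_1)$, and match the resulting kernel term by term with the telescoped expansion of the path-integral kernel $\mathbf{S}_\h$, the crux being that $\mathbf{K}_{1,\alpha_1}^{\mathrm{hypo}(\h)}$ is precisely the double contour integral with $\limess_\h$ in the integrand. Two small corrections: Proposition~\ref{prop: shift} is not actually needed here (the Brownian motion in the MQR hypograph kernel already starts at $0$, so the identification with \eqref{eq: limess} is direct), and the cubic decay along the limiting contours is immediate from the explicit form of $\mathrm{f}_i$ rather than from Lemma~\ref{lm: function_converge}, which concerns the pre-limit TASEP functions $\mathrm{f}_i^\varepsilon$.
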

We will first express the path integral kernel in terms of contour integrals, the result is summarized in the following proposition.
\begin{prop}
    Assume $\h\in \ucc$. Let 
    $$\mathbf{S}_\h:= -\mathbf{K}_{1,\alpha_1}^{\mathrm{hypo}(\h)}+\mathbf{1}_{\leq \beta_1}\mathrm{e}^{(\alpha_1-\alpha_2)\partial^2}\mathbf{1}_{\leq \beta_2}\cdots \mathbf{1}_{\leq \beta_m}\mathrm{e}^{(\alpha_m-\alpha_1)\partial^2}\mathbf{K}_{1,\alpha_1}^{\mathrm{hypo}(\h)}.$$
    Then we have the following contour integral representation for the kernel of $\mathbf{S}_\h$:
    \begin{equation}\label{eq: path integral_contour}
    \begin{aligned}
        \mathbf{S}_\h(\lambda,\mu) =& -\mathbf{1}_{\lambda>\beta_1}\int_{\Gamma_{1,\LL}}\frac{\diff \xi}{2\pi\mathrm{i}}\int_{\Gamma_{1,\RR}}\frac{\diff \eta}{2\pi\mathrm{i}}\frac{\mathrm{f}_1(\xi)}{\mathrm{f}_1(\eta)}\cdot \limess_\h(\eta,\xi)\cdot \mathrm{e}^{(\mu-\beta_1)\xi-(\lambda-\beta_1)\eta}\\
        &  +\sum_{i=2}^m \mathbf{1}_{\lambda\leq \beta_1}\left(\prod_{\ell=1}^i\int_{\Gamma_{\ell,\LL}^{\mathrm{in}}}\frac{\diff \xi_i}{2\pi \mathrm{i}}\right)\int_{\Gamma_{1,\RR}}\frac{\diff \eta}{2\pi\mathrm{i}}\frac{\prod_{\ell=1}^i \mathrm{F}_\ell(\xi_\ell)\cdot \limess_\h(\eta,\xi_1)}{\prod_{\ell=2}^{i-1}(\xi_\ell-\xi_{\ell+1})\cdot (\xi_i-\eta)}\frac{\mathrm{e}^{(\mu-\beta_1)\xi_1-(\lambda-\beta_1)\xi_2}}{\mathrm{f}_i(\eta)}.
    \end{aligned}
    \end{equation}
\end{prop}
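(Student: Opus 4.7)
The plan is to split $\mathbf{S}_\h$ into its two defining pieces and compute each through a Fourier-type representation of the Brownian hypograph hitting kernel afforded by the definition of $\limess_\h$. The first step, isolated as a preliminary lemma, is the identity
$$\mathbf{K}_{1,\alpha_1}^{\mathrm{hypo}(\h)}(\lambda,\mu) = \int_{\Gamma_{1,\LL}}\frac{d\xi}{2\pi \mathrm{i}}\int_{\Gamma_{1,\RR}}\frac{d\eta}{2\pi \mathrm{i}}\,\frac{\mathrm{f}_1(\xi)}{\mathrm{f}_1(\eta)}\,\limess_\h(\eta,\xi)\,\mathrm{e}^{(\mu-\beta_1)\xi-(\lambda-\beta_1)\eta},$$
expressing $\limess_\h$ as the ``Fourier-Laplace dual'' of the hypograph hitting kernel of \cite{matetski2021kpz}. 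Its proof matches the three-term decomposition of $\limess_\h$ in Definition \ref{def: limess} (hitting on the right, hitting on the left, and the two-sided correction) against the inclusion-exclusion decomposition of $\mathbf{K}_{1,\alpha_1}^{\mathrm{hypo}(\h)}$ into free-plus-hitting pieces, using the standard Airy contour-integral representation of the free propagator to translate Brownian hitting expectations into the claimed double integral.

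For $\lambda>\beta_1$ the outer indicator $\mathbf{1}_{\leq\beta_1}$ annihilates the telescoping summand in $\mathbf{S}_\h$, so $\mathbf{S}_\h(\lambda,\mu) = -\mathbf{K}_{1,\alpha_1}^{\mathrm{hypo}(\h)}(\lambda,\mu)$, matching the first line of \eqref{eq: path integral_contour} via the lemma. The main case $\lambda\leq\beta_1$ uses the telescoping identity
$$(\alpha_1-\alpha_2)+\cdots+(\alpha_{m-1}-\alpha_m)+(\alpha_m-\alpha_1) = 0,$$
hence $\prod_{\ell=1}^m \mathrm{e}^{t_\ell\partial^2} = I$ with $t_\ell = \alpha_\ell - \alpha_{\ell+1}$ (and $t_m = \alpha_m - \alpha_1$). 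Writing $\mathbf{1}_{\leq\beta_\ell} = 1 - \mathbf{1}_{>\beta_\ell}$ and expanding, the ``all ones'' summand collapses to $\mathbf{1}_{\leq\beta_1}\mathbf{K}_{1,\alpha_1}^{\mathrm{hypo}(\h)}$, exactly cancelling the $-\mathbf{K}_{1,\alpha_1}^{\mathrm{hypo}(\h)}$ term in this regime. For each remaining summand, insert the contour representation $\mathbf{1}_{x>\beta_\ell} = \int \frac{d\xi_\ell}{2\pi\mathrm{i}}\frac{\mathrm{e}^{\xi_\ell(x-\beta_\ell)}}{\xi_\ell}$ on vertical lines in the right half-plane, and observe that the surrounding heat operators then act on the resulting exponentials $\mathrm{e}^{x(\xi_\ell-\xi_{\ell+1})}$ as Fourier multipliers $\mathrm{e}^{t_\ell(\xi_\ell-\xi_{\ell+1})^2}$. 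Translations of the form $\xi_\ell\mapsto \xi_\ell - \eta$ absorb the residual $\eta$-dependence in the resulting quadratic exponents, and deforming each contour from the right half-plane to $\Gamma_{\ell,\LL}^{\mathrm{in}}$ produces poles at $\xi_\ell = \xi_{\ell+1}$ and $\xi_i = \eta$ whose residues supply the Cauchy denominators of \eqref{eq: path integral_contour}; the remaining quadratic and linear exponents combine to reconstruct $\mathrm{F}_\ell(\xi_\ell)$ and $1/\mathrm{f}_i(\eta)$. After a telescoping cancellation across the $2^{m-1}$ subsets of active indicators, only $m-1$ terms survive, indexed by $i=2,\ldots,m$.

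The main technical obstacle lies in this last step. The backward heat operators $\mathrm{e}^{t_\ell\partial^2}$ with $t_\ell<0$ lack classical integral kernels and must be interpreted as Fourier multipliers on the specific exponentials they act on, justified via the quadratic decay of $\mathrm{F}_\ell$ along $\Gamma_{\ell,\LL}^{\mathrm{in}}$. Moreover, the sequence of $m-1$ contour deformations must be orchestrated so that the residues picked up at intermediate poles collapse in a telescoping pattern producing exactly the $m-1$ target terms, rather than proliferating into $2^{m-1}$ uncancelled contributions.
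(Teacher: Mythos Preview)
Your preliminary lemma expressing $\mathbf{K}_{1,\alpha_1}^{\mathrm{hypo}(\h)}$ as a double contour integral with $\limess_\h$ in the integrand is correct and matches the paper exactly, as does the $\lambda>\beta_1$ case. The divergence is in how you handle $\lambda\leq\beta_1$.

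The paper does \emph{not} expand all $2^{m-1}$ subsets. It telescopes one indicator at a time from the right: write $\mathbf{1}_{\leq\beta_m}=1-\mathbf{1}_{>\beta_m}$, use the semigroup identity $\mathrm{e}^{(\alpha_{m-1}-\alpha_m)\partial^2}\mathrm{e}^{(\alpha_m-\alpha_1)\partial^2}\mathbf{K}=\mathrm{e}^{(\alpha_{m-1}-\alpha_1)\partial^2}\mathbf{K}$ to collapse the ``$1$'' branch, and iterate. This immediately yields the decomposition
\[
-\mathbf{1}_{>\beta_1}\mathbf{K}_{1,\alpha_1}^{\mathrm{hypo}(\h)}\;-\;\sum_{i=2}^m \mathbf{1}_{\leq\beta_1}\mathrm{e}^{(\alpha_1-\alpha_2)\partial^2}\cdots\mathbf{1}_{\leq\beta_{i-1}}\mathrm{e}^{(\alpha_{i-1}-\alpha_i)\partial^2}\mathbf{1}_{>\beta_i}\,\mathrm{e}^{(\alpha_i-\alpha_1)\partial^2}\mathbf{K}_{1,\alpha_1}^{\mathrm{hypo}(\h)},
\]
with exactly $m$ terms and no later cancellation needed. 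In each summand the single backward heat operator $\mathrm{e}^{(\alpha_i-\alpha_1)\partial^2}$ is fused with $\mathbf{K}$ (the cubic $\eta$-decay in $1/\mathrm{f}_1(\eta)$ absorbs the quadratic growth), while every remaining heat factor $\mathrm{e}^{(\alpha_{\ell-1}-\alpha_\ell)\partial^2}$ is forward and has an honest Gaussian kernel, written as $\int_{c_\ell+\mathrm{i}\mathbb{R}}\frac{d\xi_\ell}{2\pi\mathrm{i}}\,\mathrm{e}^{(q-p)\xi_\ell+(\alpha_\ell-\alpha_{\ell-1})\xi_\ell^2}$. The Cauchy denominators $\frac{1}{\xi_\ell-\xi_{\ell+1}}$ then arise from the elementary spatial integrals $\int_{-\infty}^{\beta_\ell}\mathrm{e}^{r(\xi_\ell-\xi_{\ell+1})}\,dr$ coming from the $\mathbf{1}_{\leq\beta_\ell}$ projections (and $\int_{\beta_i}^\infty$ for the single $\mathbf{1}_{>\beta_i}$, giving $\frac{1}{\eta-\xi_i}$), not from any residue calculus.

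Your proposed mechanism is off in two places. First, the Fourier multiplier action of $\mathrm{e}^{t_\ell\partial^2}$ on $\mathrm{e}^{x\xi}$ produces $\mathrm{e}^{t_\ell\xi^2}$, not $\mathrm{e}^{t_\ell(\xi_\ell-\xi_{\ell+1})^2}$; there is no natural way to get a quadratic in the \emph{difference} of two contour variables here. Second, the Cauchy denominators do not come from ``poles at $\xi_\ell=\xi_{\ell+1}$ picked up under contour deformation'': your Bromwich representation of $\mathbf{1}_{>\beta_\ell}$ introduces only a pole at $\xi_\ell=0$, and no deformation of that contour will create a $\frac{1}{\xi_\ell-\xi_{\ell+1}}$ singularity. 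The obstacle you flag at the end --- orchestrating $2^{m-1}$ residue contributions into a telescoping collapse --- is an artifact of the wrong decomposition; the paper's right-to-left telescoping at the operator level eliminates it before any contour integrals are written down.
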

\begin{proof}
    First note that by writing $\mathbf{1}_{\leq \beta_m}= 1-\mathbf{1}_{>\beta_m}$, we have 
    \begin{equation*}
        \begin{aligned}
        &\mathbf{1}_{\leq \beta_1}\mathrm{e}^{(\alpha_1-\alpha_2)\partial^2}\cdots \mathbf{1}_{\leq \beta_m}\mathrm{e}^{(\alpha_m-\alpha_1)\partial^2}\mathbf{K}_{1,\alpha_1}^{\mathrm{hypo}(\h)}\\
        &= \mathbf{1}_{\leq \beta_1}\mathrm{e}^{(\alpha_1-\alpha_2)\partial^2}\cdots \mathbf{1}_{\leq \beta_{m-1}}\mathrm{e}^{(\alpha_{m-1}
        -\alpha_1)\partial^2}\mathbf{K}_{1,\alpha_1}^{\mathrm{hypo}(\h)} \\
        &\quad- \mathbf{1}_{\leq \beta_1}\mathrm{e}^{(\alpha_1-\alpha_2)\partial^2}\cdots \mathbf{1}_{\leq \beta_{m-1}}\mathrm{e}^{(\alpha_{m-1}-\alpha_{m})}\mathbf{1}_{>\beta_m}\mathrm{e}^{(\alpha_m-\alpha_1)\partial^2}\mathbf{K}_{1,\alpha_1}^{\mathrm{hypo}(\h)},
        \end{aligned}
    \end{equation*}
    where we are using the semigroup property 
    \begin{equation*}
        \mathrm{e}^{(\alpha_{m-1}-\alpha_m)\partial^2}\mathrm{e}^{(\alpha_{m}-\alpha_1)\partial^2}\mathbf{K}_{1,\alpha_1}^{\mathrm{hypo}(\h)} =  \mathrm{e}^{(\alpha_{m-1}-\alpha_1)\partial^2}\mathbf{K}_{1,\alpha_1}^{\mathrm{hypo}(\h)}.
    \end{equation*}
    Repeating this argument for $\mathbf{1}_{\leq \beta_1}\mathrm{e}^{(\alpha_1-\alpha_2)\partial^2}\cdots \mathbf{1}_{\leq \beta_{i}}\mathrm{e}^{(\alpha_{i}
        -\alpha_1)\partial^2}\mathbf{K}_{1,\alpha_1}^{\mathrm{hypo}(\h)}$ with $i=m-1,\ldots,2$,  we see that 
    \begin{equation}\label{eq: path_decomp}
        \begin{aligned}
            &-\mathbf{K}_{1,\alpha_1}^{\mathrm{hypo}(\h)}+\mathbf{1}_{\leq \beta_1}\mathrm{e}^{(\alpha_1-\alpha_2)\partial^2}\mathbf{1}_{\leq \beta_2}\cdots \mathbf{1}_{\leq \beta_m}\mathrm{e}^{(\alpha_m-\alpha_1)\partial^2}\mathbf{K}_{1,\alpha_1}^{\mathrm{hypo}(\h)}\\
            &= -\mathbf{1}_{>\beta_1} \mathbf{K}_{1,\alpha_1}^{\mathrm{hypo}(\h)}-\sum_{i=2}^m \mathbf{1}_{\leq \beta_1}\mathrm{e}^{(\alpha_1-\alpha_2)\partial^2}\cdots \mathbf{1}_{\leq \beta_{i-1}}\mathrm{e}^{(\alpha_{i-1}-\alpha_i)\partial^2}\mathbf{1}_{>\beta_i}\mathrm{e}^{(\alpha_i-\alpha_1)\partial^2}\mathbf{K}_{1,\alpha_1}^{\mathrm{hypo}(\h)}.
        \end{aligned}
    \end{equation}
    Now recall the definition of $\mathbf{K}_{1,\alpha_1}^{\mathrm{hypo}(\h)}$ from \cite[(4.5)]{matetski2021kpz}:
    \begin{equation}\label{eq: Brownianscattering}
        \mathbf{K}_{1,\alpha_1}^{\mathrm{hypo}(\h)} =   \left(S_{1,-\alpha_1}^{\mathrm{hypo}(\h^{-})}\right)^* S_{1,\alpha_1}+S_{1,-\alpha_1}^* S_{1,\alpha_1}^{\mathrm{hypo}(\h^{+})} -\left(S_{1,-\alpha_1}^{\mathrm{hypo}(\h^{-})}\right)^*S_{1,\alpha_1}^{\mathrm{hypo}(\h^{+})},
    \end{equation}
    where 
    \begin{equation}
        S_{t,x}(p,q) = S_{t,x}^{*}(q,p) := \int_{\Gamma_{1,\RR}} \frac{\diff \eta}{2\pi \mathrm{i}}\mathrm{e}^{\frac{t}{3}\eta^3+x \eta^2+(p-q)\eta }=\int_{\Gamma_{1,\LL}} \frac{\diff \xi}{2\pi \mathrm{i}}\mathrm{e}^{-\frac{t}{3}\xi^3+x \xi^2-(p-q)\xi },
    \end{equation}
    and 
    \begin{equation}
        S_{t,x}^{\mathrm{hypo}(\h^+)}(p,q):= \mathbb{E}_{\B(0)=p}\left[S_{t,x-\ta_+}(\B(\ta_+),q)\mathbf{1}_{\ta_+<\infty}\right].
    \end{equation}
    We have 
    \begin{equation*}
        \begin{aligned}
            &S_{1,-\alpha_1}^* S_{1,\alpha_1}^{\mathrm{hypo}(\h^{+})}(p,q) \\
            &= \int_\mathbb{R}\diff s\, \int_{\Gamma_{1,\RR}} \frac{\diff \eta}{2\pi\mathrm{i}} \mathrm{e}^{\frac{1}{3}\eta^3-\alpha_1\eta^2+(s-p)\eta}\cdot \mathbb{E}_{\B(0)=s}\left[\int_{\Gamma_{1,\LL}} \frac{\diff \xi}{2\pi\mathrm{i}}\mathrm{e}^{-\frac{1}{3}\xi^3+(\alpha_1-\ta_+)\xi^2+(q-\B(\ta_+))\xi}\right] \\
            &=\int_{\Gamma_{1,\LL}} \frac{\diff \xi}{2\pi\mathrm{i}}\int_{\Gamma_{1,\RR}}\frac{\diff \eta}{2\pi\mathrm{i}} \frac{\mathrm{e}^{-\frac{1}{3}\xi^3+\alpha_1\xi^2+q\xi}}{\mathrm{e}^{-\frac{1}{3}\eta^3+\alpha_1\eta^2+p\eta}}\cdot \int_\mathbb{R}\diff s\,\mathrm{e}^{s\eta}\cdot \mathbb{E}_{\B(0)=s}\left[\exp\left(-\ta_+\xi^2-\B(\ta_+)\xi\right)\right],
        \end{aligned}
    \end{equation*}
    where the change of order of integration is justified by Proposition \ref{prop:convergence_lim}. A similar computation for the other two terms in \eqref{eq: Brownianscattering} implies that 
    \begin{equation}
        \mathbf{K}_{1,\alpha_1}^{\mathrm{hypo}(\h)} (p,q) = \int_{\Gamma_{1,\LL}} \frac{\diff \xi}{2\pi\mathrm{i}}\int_{\Gamma_{1,\RR}} \frac{\diff \eta}{2\pi\mathrm{i}} \frac{\mathrm{e}^{-\frac{1}{3}\xi^3+\alpha_1\xi^2+q\xi}}{\mathrm{e}^{-\frac{1}{3}\eta^3+\alpha_1\eta^2+p\eta}}\cdot \limess_\h(\eta,\xi),
    \end{equation}
    where $\limess_\h(\eta,\xi)$ is defined in \eqref{eq: limess}. Thus 
    \begin{equation}
        \mathrm{e}^{(\alpha_i-\alpha_1)\partial^2}\mathbf{K}_{1,\alpha_1}^{\mathrm{hypo}(\h)} (p,q) = \int_{\Gamma_{1,\LL}} \frac{\diff \xi}{2\pi\mathrm{i}}\int_{\Gamma_{1,\RR}} \frac{\diff \eta}{2\pi\mathrm{i}} \frac{\mathrm{e}^{-\frac{1}{3}\xi^3+\alpha_1\xi^2+q\xi}}{\mathrm{e}^{-\frac{1}{3}\eta^3+\alpha_i\eta^2+p\eta}}\cdot \limess_\h(\eta,\xi),
    \end{equation}
    for $2\leq i\leq m$. On the other hand, for $2\leq i\leq m$,  the heat kernel $\mathrm{e}^{(\alpha_{i-1}-\alpha_{i})\partial^2}$ can be expressed as 
    \begin{equation}
        \displaystyle\mathrm{e}^{(\alpha_{i-1}-\alpha_{i})\partial^2} (p,q) = \frac{1}{\sqrt{4\pi(\alpha_{i}-\alpha_{i-1})}} \mathrm{e}^{-\frac{(p-q)^2}{4(\alpha_{i}-\alpha_{i-1})}} = \int_{c+\mathrm{i}\mathbb{R}} \frac{\diff \xi_i}{2\pi \mathrm{i}} \mathrm{e}^{(q-p)\xi_i}\cdot \mathrm{e}^{(\alpha_{i}-\alpha_{i-1})\xi_i^2}, 
    \end{equation}
    for any $c\in \mathbb{R}$. Thus the convolution $\mathrm{e}^{(\alpha_1-\alpha_2)\partial^2}\mathbf{1}_{\leq \beta_2}\mathrm{e}^{(\alpha_{2}-\alpha_3)\partial^2}$ has the following kernel:
    \begin{equation*}
        \begin{aligned}
        \mathrm{e}^{(\alpha_1-\alpha_2)\partial^2}\mathbf{1}_{\leq \beta_2}\mathrm{e}^{(\alpha_{2}-\alpha_3)\partial^2}(p,q)& = \int_{c_2+\mathrm{i}\mathbb{R}}\frac{\diff \xi_2}{2\pi\mathrm{i}} \int_{-\infty}^{\beta_2}\diff r_2 \int_{c_3+\mathrm{i}\mathbb{R}}\frac{\diff \xi_3}{2\pi \mathrm{i}} \mathrm{e}^{-p\xi_2+r_2\xi_2-r_2\xi_3+q\xi_3}\cdot \mathrm{e}^{(\alpha_2-\alpha_1)\xi_2^2+(\alpha_3-\alpha_2)\xi_3^2}\\
        &= \int_{c_2+\mathrm{i}\mathbb{R}}\frac{\diff \xi_2}{2\pi\mathrm{i}}\int_{c_3+\mathrm{i}\mathbb{R}}\frac{\diff \xi_3}{2\pi\mathrm{i}} \frac{\mathrm{e}^{(\beta_1-p)\xi_2 + (q-\beta_3)\xi_3}}{\xi_2-\xi_3}\cdot \mathrm{F}_2(\xi_2)\cdot \mathrm{F}_3(\xi_3),
        \end{aligned}
    \end{equation*}
    where $\mathrm{F}_i(\zeta)=: \mathrm{e}^{(\alpha_i-\alpha_{i-1})\zeta^2+(\beta_i-\beta_{i-1})\zeta}$ and $c_2>c_3$. Similarly, for any $2\leq i\leq m$ we have 
    \begin{equation}
        \mathrm{e}^{(\alpha_1-\alpha_2)\partial^2}\mathbf{1}_{\leq \beta_2}\cdots \mathbf{1}_{\leq \beta_{i-1}}\mathrm{e}^{(\alpha_{i-1}-\alpha_{i})\partial^2}(p,q) = \left(\prod_{\ell=2}^i\int_{c_\ell+\mathrm{i}\mathbb{R}} \frac{\diff \xi_\ell}{2\pi \mathrm{i}}\right)\frac{\mathrm{e}^{(\beta_1-p)\xi_2 + (q-\beta_i)\xi_i}}{\prod_{\ell=2}^{i-1}(\xi_\ell-\xi_{\ell+1})}\cdot \prod_{\ell=2}^i \mathrm{F}_\ell(\xi_\ell),
    \end{equation}
    where $c_2>\cdots >c_i$. Thus 
    \begin{equation}\label{eq: path_i}
        \begin{aligned}
            &\mathrm{e}^{(\alpha_1-\alpha_2)\partial^2}\mathbf{1}_{\leq \beta_2}\cdots \mathbf{1}_{\leq \beta_{i-1}}\mathrm{e}^{(\alpha_{i-1}-\alpha_{i})\partial^2}\mathbf{1}_{>\beta_i}\mathrm{e}^{(\alpha_i-\alpha_1)\partial^2}\mathbf{K}_{1,\alpha_1}^{\mathrm{hypo}(\h)}(p,q)\\
            &=\left(\prod_{\ell=2}^i\int_{c_\ell+\mathrm{i}\mathbb{R}} \frac{\diff \xi_\ell}{2\pi \mathrm{i}}\right)\int_{\beta_i}^{\infty}\diff r \int_{\Gamma_{1,\LL}} \frac{\diff \xi_1}{2\pi\mathrm{i}}\int_{\Gamma_{1,\RR}} \frac{\diff \eta}{2\pi\mathrm{i}} \frac{\mathrm{e}^{(\beta_1-p)\xi_2 + (r-\beta_i)\xi_i}}{\prod_{\ell=2}^{i-1}(\xi_\ell-\xi_{\ell+1})}\prod_{\ell=2}^i \mathrm{F}_\ell(\xi_\ell)\frac{\mathrm{e}^{-\frac{1}{3}\xi_1^3+\alpha_1\xi_1^2+q\xi_1}}{\mathrm{e}^{-\frac{1}{3}\eta^3+\alpha_i\eta^2+r\eta}}\limess_\h(\eta,\xi_1)\\
            &= \left(\prod_{\ell=2}^i\int_{c_\ell+\mathrm{i}\mathbb{R}} \frac{\diff \xi_\ell}{2\pi \mathrm{i}}\right)\int_{\Gamma_{1,\LL}} \frac{\diff \xi_1}{2\pi\mathrm{i}}\int_{\Gamma_{1,\RR}} \frac{\diff \eta}{2\pi\mathrm{i}} \frac{\mathrm{e}^{(\beta_1-p)\xi_2 + (q-\beta_1)\xi_1}}{\prod_{\ell=2}^{i-1}(\xi_\ell-\xi_{\ell+1})\cdot (\eta-\xi_i)} \prod_{\ell=2}^i \mathrm{F}_\ell(\xi_\ell) \cdot\frac{\mathrm{f}_1(\xi_1)}{\mathrm{f}_i(\eta)}\cdot \limess_\h(\eta,\xi_1).
        \end{aligned}
    \end{equation}
    By combining \eqref{eq: path_i} with \eqref{eq: path_decomp}, we arrive at the desired expression \eqref{eq: path integral_contour} for $\mathbf{S}_\h(\lambda,\mu)$.
\end{proof}

Finally we rewrite $\mathbf{T}_\h$ properly to match with $\mathbf{S}_\h$. Deform $\Gamma_{1,\LL}$ and $\Gamma_{2,\LL}^{\mathrm{in}}$ into two vertical lines $c_i+\mathrm{i}\mathbb{R}$, $i=1,2$, with $0>c_1>c_2$.  Then we have 
\begin{equation*}
    \frac{1}{\xi_1-\xi_2} = \int_{-\infty}^0 \mathrm{d}\lambda\, \mathrm{e}^{\lambda(\xi_1-\xi_2)},
\end{equation*}
for any $\xi_1\in \Gamma_{1,\LL}$ and $\xi_2\in \Gamma_{2,\LL}^{\mathrm{in}}$. Now we write $\mathbf{T}_\h:= L_1 L_2$, where  $L_1: L^2(\mathbb{R})\to L^2(\{1,\ldots,m\}\times \Gamma_{1,\RR})$ has the following kernel:
\begin{equation}
    L_1(i,\zeta;\lambda) = 
    \begin{cases}
        \displaystyle -\mathrm{e}^{-\lambda\zeta}\mathbf{1}_{\lambda> 0},\quad &i=1,\\
        \displaystyle\prod_{\ell=2}^i\int_{\Gamma_{\ell,\LL}^{\mathrm{in}}} \frac{\mathrm{d}\xi_\ell} {2\pi\mathrm{i}}\frac{\prod_{\ell=2}^i \mathrm{F}_\ell(\xi_\ell)\mathrm{e}^{-\lambda\xi_2}}{\prod_{\ell=2}^{i-1}(\xi_\ell-\xi_{\ell+1})\cdot (\xi_i-\zeta)}\mathbf{1}_{\lambda\leq 0}, &2\leq i\leq m,
    \end{cases}
\end{equation}
and  $L_2: L^2(\{1,\ldots,m\}\times \Gamma_{1,\RR})\to L^2(\mathbb{R})$ has the following kernel:
\begin{equation}
    L_2(\lambda;j,\eta):= \int_{\Gamma_{1,\LL}}\frac{\diff \xi_1}{2\pi \mathrm{i}} \frac{\mathrm{f}_1(\xi_1)}{\mathrm{f}_j(\eta)} \mathrm{e}^{\lambda\xi_1}\cdot \limess_\h(\eta,\xi_1).
\end{equation}
Then we have
\begin{equation*}
    \det (\mathrm{I}+\mathbf{T}_\h)_{L^2(\{1,\ldots,m\}\times \Gamma_{1,\RR})} = \det(\mathrm{I}+L_1L_2)=\det(\mathrm{I}+L_2L_1):= \det(\mathrm{I}+\widehat{\mathbf{S}}_\h)_{L^2(\mathbb{R})},
\end{equation*}
where 
\begin{equation}\label{eq: conjugate_S}
\begin{aligned}
    \widehat{\mathbf{S}}_\h(\lambda,\mu) &= \sum_{i=1}^m \int_{\Gamma_{1,\RR}} \frac{\diff \eta}{2\pi\mathrm{i}} L_2(\lambda; i,\eta) L_1(i,\eta;\mu)\\
    &= -\int_{\Gamma_{1,\RR}} \frac{\diff \eta}{2\pi\mathrm{i}}\int_{\Gamma_{1,\LL}}\frac{\diff \xi}{2\pi \mathrm{i}} \frac{\mathrm{f}_1(\xi)}{\mathrm{f}_1(\eta)} \mathrm{e}^{\lambda\xi-\mu\eta}\cdot \limess_\h(\eta,\xi)\cdot \mathbf{1}_{\mu> 0}\\
    &\quad+\sum_{i=2}^m \int_{\Gamma_{1,\RR}} \frac{\diff \eta}{2\pi\mathrm{i}}\left(\prod_{\ell=1}^i \int_{\Gamma_{\ell,\LL}^{\mathrm{in}}}\frac{\diff \xi_\ell}{2\pi \mathrm{i}}\right) \frac{\prod_{\ell=1}^i \mathrm{F}_\ell(\xi_\ell)}{\prod_{\ell=2}^{i-1}(\xi_\ell-\xi_{\ell+1})\cdot (\xi_i-\eta)}\frac{\mathrm{e}^{\lambda\xi_1-\mu\xi_2}}{\mathrm{f}_i(\eta)}\cdot \limess_\h(\eta,\xi_1)\cdot\mathbf{1}_{\mu\leq 0}.
\end{aligned}
\end{equation}
Comparing \eqref{eq: conjugate_S} with \eqref{eq: path integral_contour}, we see that 
$\widehat{\mathbf{S}}_\h(\lambda,\mu) = \mathbf{S}_\h(\mu+\beta_1,\lambda+\beta_1)$.
Thus
\begin{equation*}
    \det(\mathrm{I}+\mathbf{S}_\h)_{L^2(\mathbb{R})} = \det(\mathrm{I}+\widehat{\mathbf{S}}_\h)_{L^2(\mathbb{R})}=\det(\mathrm{I}+\mathbf{T}_\h)_{L^2(\{1,\ldots,m\}\times \Gamma_{1,\RR})}.
\end{equation*}
This completes the proof of the equivalence between Proposition \ref{prop: contour_rep} and \ref{prop: pathintegral}.

\end{document}